\documentclass[reqno]{amsart}
\usepackage{textcomp}

\usepackage[dvipsnames]{xcolor}
\usepackage{amsmath}
\usepackage{amsthm}
\usepackage{hyperref}
\usepackage{amsfonts,graphics,amsthm,amsfonts,amscd,latexsym}
\usepackage{tikz-cd}
\usepackage{epsfig}
\usepackage{flafter}
\usepackage{longtable}
\usepackage{mathtools}
\usepackage{comment}
\usepackage{stmaryrd}

\usepackage{mathabx,epsfig}

\hypersetup{
    colorlinks=true,    
    linkcolor=blue,          
    citecolor=blue,      
    filecolor=blue,      
    urlcolor=blue           
}
\usepackage{tikz}
\usetikzlibrary{graphs,positioning,arrows,shapes.misc,decorations.pathmorphing}

\tikzset{
    >=stealth,
    every picture/.style={thick},
    graphs/every graph/.style={empty nodes},
}

\tikzstyle{vertex}=[
    draw,
    circle,
    fill=black,
    inner sep=1pt,
    minimum width=5pt,
]
\usepackage[position=top]{subfig}
\usepackage{amssymb}
\usepackage{color}

\calclayout

\usetikzlibrary{decorations.pathmorphing}
\tikzstyle{printersafe}=[decoration={snake,amplitude=0pt}]

\newcommand{\vol}{\operatorname{vol}}

\newcommand{\pp}{\mathbb{P}}

\newcommand{\qq}{\mathbb{Q}}
\newcommand{\zz}{\mathbb{Z}}

\newcommand{\cc}{\mathbb{C}}

\def\O#1.{\mathcal {O}_{#1}}   
\def\pr #1.{\mathbb P^{#1}}    
\def\af #1.{\mathbb A^{#1}}   
\def\ses#1.#2.#3.{0\to #1\to #2\to #3 \to 0} 
\def\xrar#1.{\xrightarrow{#1}}   
\def\K#1.{K_{#1}}      
\def\bA#1.{\mathbf{A}_{#1}}   
\def\bM#1.{\mathbf{M}_{#1}}    
\def\bL#1.{\mathbf{L}_{#1}}    
\def\bB#1.{\mathbf{B}_{#1}}    
\def\bK#1.{\mathbf{K}_{#1}}   
\def\subs#1.{_{#1}}     
\def\sups#1.{^{#1}}

\newcommand{\lif}{\upharpoonleft \kern-0.35em}

\usepackage{tikz}
\usetikzlibrary{matrix,arrows,decorations.pathmorphing}

  \newtheorem{theorem}{Theorem}[section]
  
  \newtheorem{proposition}[theorem]{Proposition}
  \newtheorem{corollary}[theorem]{Corollary}
  
  \newtheorem{conjecture}[theorem]{Conjecture}

  \newtheorem{definition}[theorem]{Definition}
  \newtheorem{example}[theorem]{Example}
  
  \newtheorem{problem}[theorem]{Problem}

\theoremstyle{remark}

\numberwithin{equation}{section}

\usepackage[all]{xy}

\begin{document}

\thanks{J.M. was partially supported by NSF research grant DMS-2443425.}

\title[Cluster type varieties]
{Cluster type varieties}

\author[J.~Moraga]{Joaqu\'in Moraga}
\address{UCLA Mathematics Department, Box 951555, Los Angeles, CA 90095-1555, USA
}
\email{jmoraga@math.ucla.edu}

\subjclass[2020]{Primary 14M25, 14E25;
Secondary  14B05, 14E30, 14E05.}
\keywords{Toric geometry, cluster geometry, cluster type varieties.}

\begin{abstract}
Cluster type varieties are compactifications of algebraic tori on which the volume form has no zeros. 
These form a natural class of varieties that generalizes both toric varieties and cluster varieties. 
The aim of this article is to introduce the reader to the concept
of cluster type varieties and explain some recent results towards the understanding of these varieties. 
At the same time, we will pose some problems for further research.
\end{abstract} 

\maketitle

\setcounter{tocdepth}{1} 
\tableofcontents

\section{Introduction} 

This survey aims to introduce the reader to the concept of cluster type variety.
To define cluster type varieties, we start with an algebraic torus $\mathbb{G}_m^n$ with variables $x_1,\dots,x_n$.
We can define a volume form 
\[
\Omega_n := \frac{dx_1\wedge \dots \wedge dx_n}{x_1\cdots x_n} 
\]
on $\mathbb{G}_m^n$.
The volume form $\Omega_n$ is a top differential form in $\mathbb{G}_m^n$ without zeros or poles. 
Let $j\colon \mathbb{G}_m^n \dashrightarrow X$ be an embedding in codimension one, i.e., a birational map which is an embedding up to a closed subset of codimension two in $\mathbb{G}_m^n$. We can think of $j_*\Omega_n$
as a rational top differential form on $X$. 
The variety $X$ is said to be {\em cluster type} if $j_*\Omega_n$ has no zeros on $X$ (see Definition~\ref{def:cluster-type}). 
The poles of the volume form $j_*\Omega_n$ defines a boundary divisor $B$ on $X$ which we call the {\em cluster type boundary}
and the pair $(X,B)$ is called a {\em cluster type pair}. 

The concept of cluster type varieties generalizes toric varieties and cluster varieties (the spectrum of cluster algebras).
These are probably the two most important classes of cluster type varieties. 
The definition of cluster type varieties is recent; an early pioneer definition was given by
Corti, Filip, and Petracci in~\cite[Definition 3.4]{CFP20}.
The current definition was given by Enwright, Figueroa, and the author in~\cite[Section 1.4]{EFM24}. 
The definition given in~\cite[Section 1.4]{EFM24} is more birational in nature, however, it is equivalent to the definition given in this introduction.  
Since their definition, cluster type varieties have sparked substantial research and interest. 
Some of the works include~\cite{CFP20, corti2023cluster,Mor24b,EFM24,LMV24,JM24,AdSFM24,Fil25,MY24,ELY25,MZ25}. In this survey, we explain these works and related research, and propose new directions for future investigation. At the same time, we try to provide some historical background on standard notions in algebraic geometry. 
We also present new theorems and examples that can help understand this theory.

For the rest of the introduction, we provide a brief overview of the topics and results covered in each section of this survey.
In Section~\ref{sec:toric-geometry}, we recall notions in toric geometry. 
We will review the relation between smooth projective toric varieties
and smooth integral polytopes. 
We will discuss some problems related to asymptotics in the classification of smooth/Gorenstein toric Fano varieties. 
In Section~\ref{sec:cluster-geometry}, we will discuss the geometric side of cluster algebras and define the notion of cluster type varieties. For the rest of that section, we will mostly focus on surfaces and prove that the notion of cluster type pairs of dimension two $(X,B)$ coincides with that of Looijenga pairs whenever $X$ is smooth (see Proposition~\ref{prop:Looijenga-pairs-are-cluster-type}). We will also explain that the understanding of cluster type pairs of dimension two is much harder when the underlying surface has Gorenstein singularities (see Example~\ref{ex:ct-sing-surf}). In Section~\ref{sec:sing-theory} and Section~\ref{sec: bir-geom}, we make an interlude to discuss concepts related to singularity theory and birational geometry. 
These sections will review concepts about singularities of pairs and crepant birational pairs. The content of these two sections is not central to the survey, but they provide useful language for explaining the geometry of higher-dimensional cluster type pairs. At the same time, we prove some new results, in Section~\ref{sec: bir-geom}, we show that a log rational pair admits a crepant blow-up which is cluster type. We refer the reader to Definition~\ref{def:log-rational} for the concept of log rational pairs. In Section~\ref{sec:dim-3}, we will discuss the known results about three-dimensional cluster type pairs. 
We begin this section by discussing the log rationality of three-dimensional Calabi-Yau pairs. 
We recall some results due to Ducat~\cite{Duc24} in the case of pairs $(\pp^3,B)$ and some results due to Loginov, Vasilkov, and the author for pairs $(X,B)$ where $X$ is a smooth Fano threefold~\cite{LMV24}. 
Then, we dive into detecting which pairs of the form $(\pp^3,B)$ are cluster type, or equivalently, which quartic surfaces $B\subset \pp^3$ admit algebraic tori in their complement $\pp^3\setminus B$.
In Section~\ref{sec:complexity}, we discuss how the complexity and the birational complexity relate to notions of toric geometry and cluster geometry. 
We will review theorems stating that Calabi--Yau pairs of complexity zero are toric~\cite{BMSZ18}, while smooth Calabi--Yau pairs of complexity one are cluster type~\cite{ELY25}. We pose some questions regarding Calabi--Yau pairs of higher complexity.
In Section~\ref{sec:finite-actions}, we discuss finite actions on Fano varieties.
First, we recall the Jordan property for finite actions on $n$-dimensional Fano varieties. 
Then, we connect the topic of finite actions on Fano varieties to that of cluster type varieties. 
More precisely, we review a theorem stating that Fano varieties with {\em large} finite actions must be cluster type (see Theorem~\ref{thm:finite-actions}). 
Then, we propose a geometric version of the Jordan property for finite subgroups of the Cremona (see Conjecture~\ref{conj:geom-jordan}). 
In Section~\ref{sec:ct-in-families}, we study the property of being cluster type in families of Fano varieties. 
We discuss a theorem, due to Lena Ji and the author, stating that the cluster type property is constructible in families of smooth Fano varieties. Then we explore theorems on degenerations of cluster type pairs. 
Finally, in Section~\ref{sec:ct-and-cox-rings}, we study 
the Cox rings of cluster type varieties. At the same time, we discuss some results about Fano compactifications of the spectrum of cluster algebras.\\

\textbf{Acknowledgments}
These notes are based on a talk and a lecture series by the author at the Bootcamp of the Summer Research Institute in Algebraic Geometry 2025 at Colorado State University. The author would like to thank the organizers of the SRI 2025.
At the same time, the author would like to thank the participants in their lectures during the Bootcamp for their comments and fruitful discussions: 
Felipe Castellano-Macias,
Baidehi Chattopadhyay,
Lauren Cranton Heller, 
Eric Jovinelly, 
Hyunsuk Kim, 
Chih-Kuang Lee, 
Roktim Mascharak,
Niklas M\"uller,
Supravat Sarkar, and
Juan Pablo Z\'u\~niga.

\section{Toric geometry}\label{sec:toric-geometry}

Toric geometry emerged in the 1970s with the study of torus embeddings. Some early foundational figures in toric geometry 
are Demazure, Kempf, Knudsen, Mumford, Saint-Donat, Miyake, and Oda 
that linked lattices and polyhedral cones to algebraic varieties (see, e.g.,~\cite{KKMS-D73}).
In the early ages, Danilov and Khovanskii made some important contributions related to cohomological aspects of toric varieties~\cite{DK86}.
Some crucial concepts, such as the orbit-cone correspondence, the language of fans, and connections with convex geometry, were achieved
by the aforementioned mathematicians.
Danilov's work introduced the terminology {\em toric varieties}
through a translation due to Miles Reid. 
In the 1980s, it became a prolific topic
leading to some breakthroughs as Stanley's proof of McMullen's conjecture using toric ideals~\cite{Sta80}.
In the 1990s, Sturmfels popularized the concept of toric ideals 
and made important connections with commutative algebra
and Gr\"obner basis~\cite{Stu97}.
In the modern era, toric geometry is a fundamental tool in algebra.
In various fields of algebra, it's been a crucial tool for producing explicit constructions and offers computational advantages. 
Toric geometry has important applications in algebraic geometry, commutative algebra, birational geometry, mirror symmetry, and related areas.
Nowadays, the classic references for toric geometry are the works
of Cox, Little, Schenck~\cite{CLS11}, Fulton~\cite{Ful93}, and Danilov~\cite{Dan78}. 

There are many equivalent definitions of toric varieties.
The standard definition of a toric variety is the following. 

\begin{definition}
{\em 
A $n$-dimensional algebraic variety $X$ is said to be {\em toric} 
if there is an open embedding $\mathbb{G}_m^n \hookrightarrow X$ 
such that the action of $\mathbb{G}_m^n$ on itself 
extends to a group action on $X$.
}
\end{definition} 

The way to relate toric varieties with combinatorial objects is simple. We look at all the possible one-parameter subgroups 
$\phi_{\vec{a}} \colon \mathbb{G}_m \rightarrow \mathbb{G}_m^n$. These are given by 
\[
t \mapsto (t^{a_1},t^{a_2},\dots,t^{a_n}),
\]
where $\vec{a}=(a_1,\dots,a_n)\in \mathbb{Z}^n$. 
Then, for each $\vec{a}\in \zz^n$, we can consider the limit 
$x_{\vec{a}}:=\lim_{t\rightarrow 0} \phi_{\vec{a}}(t)$, if it exists. 
This gives an equivalence relation of $\zz^n$; we can identify the points $\vec{a}$ with the same limit $x_{\vec{a}}$. This leads to a subdivision of $\qq^n$ into polyhedral cones. This subdivision is not a partition, as polyhedral cones may intersect along faces. This leads to the concept of {\em polyhedral fans}\footnote{Polyhedral fans, or fans of polyhedral cones, are often simply called {\em fans} in the theory.}. For instance, the fan of $\mathbb{G}_m^n$ is just the origin in $\qq^n$, 
the fan of $\mathbb{A}^n$ is the cone ${\rm span}(e_1,\dots,e_n)$ in $\mathbb{Q}^n$, and the fan of $\mathbb{P}^n$ is the set of all cones spanned by the vectors $\{e_1,\dots,e_n,-(e_1+\dots+e_n)\}$ in $\qq^n$. Polyhedral fans in $\qq^n$ are in correspondence with $n$-dimensional toric varieties. 
To make this correspondence into a bijection, one needs to 
identify fans up to the natural action of ${\rm GL}(n,\zz)$ on $\qq^n$ and identify toric varieties up to toric isomorphism. For more details, we refer the reader to~\cite[Section 2.3]{CLS11}.

There is another natural way to construct polyhedral fans in $\qq^n$
which comes from convex geometry.
Given a convex integral polytope $P\subset \qq^n$, we can consider the space 
of linear functionals $(\qq^n)^*$ restricted to $P$.
There is an equivalence relation on these linear functionals; 
two functionals belong to the same equivalence class if they minimize along 
the same face of $P$. 
Linear functionals on the same equivalence class form polyhedral cones in $(\qq^n)^*$. This construction leads to a polyhedral fan in 
$\qq^n$ (the dual space) which is known as the {\em normal fan} of the convex polytope $P$. 
There is a natural observation about normal fans, i.e., they cover the whole dual space $\qq^n$. We call a fan {\em complete} when the union of all the cones of the fan is $\qq^n$. 
For instance, the fan of $\mathbb{P}^n$ is complete while
the fan of $\mathbb{A}^n$ is not complete. 

One can ask whether the fan of a toric variety is a normal fan. It turns out that the fan associated to a toric variety $X$ is the normal fan of a polyope $P$ if and only if $X$ is a projective variety~\cite[Theorem 2.3.1]{CLS11}. One can observe that many polytopes have the same normal fan; this happens, for instance, for all the polytopes $nP$, with $n\in \mathbb{Z}_{\geq 1}$, given an initial polytope $P$. 
One can prove that a polytope $P$ defines an ample line bundle $\mathcal{L}_P$ on the projective toric variety $X(N_P)$. 
Here, $N_P$ is the normal fan of the polytope $P$. 
Thus, a polytope leads to a polarized toric variety~\cite[Proposition 6.1.4]{CLS11}.
Below, we may write $X(P)$ for the projective toric variety associated to the polytope $P$.
For instance, the standard simplex in $\qq^n$, with vertices 
$0,e_1,\dots,e_n$, leads to $\mathcal{O}(1)$ in $\mathbb{P}^n$.
The $f$-dimensional faces of $P$ turn out to correspond to $f$-dimensional tori in $X(N_P)$ (see~\cite[Theorem 3.2.6]{CLS11}). Every toric variety can be written as the disjoint union of algebraic tori. This is made explicit via the orbit-cone correspondence.
It is worth mentioning that there are complete toric varieties which are not projective (see, e.g.,~\cite[Exercise 3.4.1]{Ful93}). 
Many properties of the toric variety $X(P)$ can be read from the corresponding polytope $P$. For instance, $X(P)$ is a smooth variety if and only if $P$ is a smooth polytope, i.e., the lattice generators at each vertex form a basis of $\zz^n$ (see~\cite[Theorem 2.4.3]{CLS11}). 
The toric variety $X(P)$ is $\qq$-factorial if and only if $P$ is a simplicial polytope, i.e., the lattice generators at each vertex form a basis of $\qq^n$ (see~\cite[Proposition 4.2.7]{CLS11}).

In algebraic geometry, {\em Fano varieties} play a central role.
These varieties are often used in classification problems and singularity theory. A normal variety $X$ is said to be {\em Fano}
if the anti-canonical divisor $-K_X$ is an ample divisor, i.e., 
$-mK_X$ is very ample for some $m\gg 0$. 
A normal variety $X$ is said to be {\em Gorenstein} if $K_X$ is a Cartier divisor. 
A projective toric variety $X(P)$ is a Gorenstein Fano variety if and only if $P$ is a reflexive polytope, i.e., if $P$ contains the origin and its polar dual is an integral polytope~\cite{KN13}. 
A projective toric variety $X(P)$ is a smooth Fano variety if and only
if $P$ is a smooth reflexive polytope. 
The previous equivalences show that classifying smooth and Gorenstein toric Fano varieties of a fixed dimension is a problem that can be tackled combinatorially.
This can be seen as the combinatorial portion of the classification of smooth or Gorenstein Fano varieties. 
The smooth reflexive polytopes of dimension two, up to isomorphism, are the following:
\begin{center}
\includegraphics[scale=0.5]{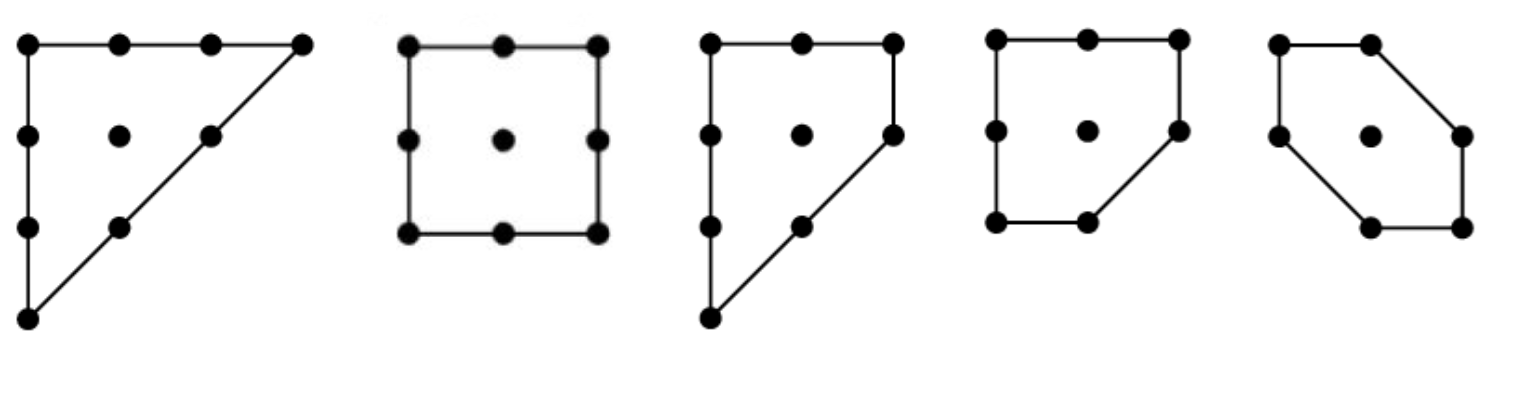}
\end{center}
These corresponds to $\pp^2,\pp^1\times \pp^1, {\rm Bl}_p(\pp^2), 
{\rm Bl}_{p,q}(\pp^2)$, and ${\rm Bl}_{p,q,r}(\pp^2)$, respectively. 
Here, we write ${\rm Bl}_{p,q,r}(\pp^2)$ for the blow-up of $\pp^2$ at three points $p,q$, and $r\in \pp^2$. More details on blow-ups of toric varieties will be explained in Section~\ref{sec: bir-geom}.

The classification of reflexive polytopes is a broad topic.
In dimension two, there are $16$ reflexive polytopes, of which $5$ are smooth.
The classifications in dimension three and four were
achieved by Kreuzer and Skarke in~\cite{KS98,KS00}.
The number of reflexive polytopes grows quickly with the dimension; there are $4,319$ in dimension $3$ and $473,800,776$ in dimension $4$.
A classification in dimension at least $5$ seems unfeasible.
The situation is much nicer for smooth reflexive polytopes. 
In the smooth setting, there are $5$ smooth reflexive polygons, $18$ smooth reflexive polytopes of dimension three, and $124$ smooth reflexive polytopes of dimension four (see~\cite{Bat99}).
There is an algorithm due to $\varnothing$bro that allows to produce the list of all
$n$-dimensional smooth reflexive polytopes~\cite{Obr07}. 
$\varnothing$bro classified such varieties up to dimension $8$ 
and Lorenz and Paffenholz classified such varieties up to dimension $9$. These can be found in the \href{https://polymake.org/polytopes/paffenholz/www/fano.html}{Smooth Reflexive Lattice Polytopes Database}.
Nevertheless, we do not have precise asymptotic bounds for the number
of $n$-dimensional smooth reflexive polytopes. 
A simple argument using products shows that the number of $n$-dimensional smooth reflexive polytopes is asymptotically at least $5.86^n$.  
There are no known exponential upper bounds for this number. This leads to the following problem.

\begin{problem}\label{problem:counting-smooth-reflexive}
Let $\mathcal{R}_n$ be the number of smooth reflexive lattice polytopes. 
Find two constants $c_1,c_2>0$ such that we have $c_1^n < \mathcal{R}_n < c_2^n$.
\end{problem}

On the contrary, the number of $n$-dimensional reflexive polytopes seems to increase doubly exponentially in $n$ (see~\cite{Kre08}). 
At any rate, we have a plethora of tools for classifying reflexive polytopes with prescribed conditions.
Henceforth, the classification of smooth/Gorenstein toric Fano varieties is a {\em decidable} problem.

One may wonder what happens for singular toric Fano varieties. 
For instance, a weighted projective space $\pp(1,1,n)$ with $n\geq 3$ is toric and Fano; however, it is not Gorenstein. 
For small $n$, we have that $\pp^2\simeq \pp(1,1,1)$ is smooth and $\pp(1,1,2)$ is a Gorenstein toric Fano surface.
In this direction, the previous sequence of examples shows that the number of such toric Fano varieties is not finite. 
However, we can get some finiteness results by imposing some constraints on the singularities of the toric Fano variety. 
More precisely, there is an invariant called {\em minimal log discrepancy} (mld for short), denoted ${\rm mld}(X)$, which associates a positive rational number or $-\infty$
to any $\qq$-Gorenstein variety $X$.
Philosophically, the larger ${\rm mld}(X)$ is, the nicer the singularities of $X$ are. Indeed, we expect ${\rm mld}(X)\leq \dim X$ for any $\qq$-Gorenstein variety with equality only happening when $X$ is smooth (see~\cite{Amb06b}).
The details of the minimal log discrepancy function will be explained
in Section~\ref{sec:sing-theory}. We will compute ${\rm mld}(\pp(1,1,n))=2n^{-1}$ in Example~\ref{ex:toric-mld}. 
A $\mathbb{Q}$-Gorenstein variety $X$ with ${\rm mld}(X)\geq \epsilon$ will be said to be {\em $\epsilon$-log canonical} ($\epsilon$-lc for short). 
The toric case of the BAB conjecture predicted that for $\epsilon>0$, there are only finitely many $n$-dimensional $e$-lc toric Fano varieties. 
This conjecture was proved by Borisov and Borisov in~\cite{BB92}. 
Thus, in principle, one can classify these varieties. 
It is unclear how the asymptotic behavior of the number of toric Fano varieties depends on the mld. 
Finding precise asymptotics for the number of these varieties is an interesting question, but $\varnothing$bro's work
points out that it is a challenging problem. Even some computational evidence towards new bounds would be of interest. 
The previous discussion motivates the following problem.

\begin{problem}\label{prob:e-lc-Fano}
Fix $\epsilon>0$. 
Let $\mathcal{T}_{n,\epsilon}$ be the number of $\epsilon$-lc toric Fano varieties of dimension $n$. Find two functions $f_1(n,\epsilon)$ and $f_2(n,\epsilon)$ such that 
$f_1(n,\epsilon) < \mathcal{T}_{n,\epsilon} < f_2(n,\epsilon)$. Describe how $\mathcal{T}_{n,\epsilon}$ depends on $\epsilon$.
\end{problem}

Now, we turn to explore some different characterizations of toric varieties 
and show how weakening these characterizations leads to new interesting classes of algebraic varieties.
The following theorem characterizes projective toric varieties in terms of their automorphism group.

\begin{theorem}\label{thm:rank-charact}
Let $X$ be a $n$-dimensional projective variety. The variety $X$ is toric if and only if 
${\rm Aut}^0(X)$ has reductive rank $n$. 
\end{theorem}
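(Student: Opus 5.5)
The plan is to prove the two implications separately, working over an algebraically closed field of characteristic zero and using that ${\rm Aut}^0(X)$ is a connected algebraic group when $X$ is projective.

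\textbf{Toric implies rank $n$.} If $X$ is toric, the torus $\mathbb{G}_m^n$ acts faithfully on $X$ by the definition of a toric variety. It is connected, so it embeds into ${\rm Aut}^0(X)$, and the reductive rank is at least $n$. For the reverse inequality, let $T\le {\rm Aut}^0(X)$ be any torus. A faithful torus action on an $n$-dimensional variety has generic stabilizers that are finite: for a torus acting faithfully on an irreducible variety, the generic stabilizer is trivial. Hence a generic orbit has dimension $\dim T$, which gives $\dim T\le n$. The same argument shows that for any $X$ the reductive rank of ${\rm Aut}^0(X)$ is at most $n$, and I will reuse this below.

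\textbf{Rank $n$ implies toric.} Assume ${\rm Aut}^0(X)$ has reductive rank $n$.
\begin{enumerate}
\item Pick a maximal torus $T\subset {\rm Aut}^0(X)$. Since the reductive rank is $n$, we have $T\cong\mathbb{G}_m^n$, and it acts faithfully on $X$.
\item Show there is a point with finite stabilizer. By Rosenlicht, or simply by considering the kernel of the action on the generic orbit, the generic stabilizer $S$ is a closed subgroup fixing a dense open set pointwise. Then $S$ acts trivially on $X$, and faithfulness forces $S$ to be trivial. So a general orbit $T\cdot x$ is isomorphic to $T$ and has dimension $n=\dim X$.
\item Conclude that this orbit is open and dense, because $X$ is irreducible. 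The orbit map $t\mapsto t\cdot x$ is then an open embedding $\mathbb{G}_m^n\hookrightarrow X$, and the action of $\mathbb{G}_m^n$ on itself extends to the $T$-action on $X$.
\item Note that normality is not required by the paper's definition of toric given above. If one insists on normality, one would assume $X$ is normal from the outset.
\end{enumerate}

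\textbf{Main obstacle.} The delicate step is the claim that the generic stabilizer of a faithful torus action is trivial rather than merely finite. I would argue it as follows. Stabilizers of points in the generic orbit are all equal to one subgroup $S$, because $T$ is commutative and so stabilizers along an orbit coincide. That subgroup therefore fixes the dense orbit pointwise, hence fixes $X$, and faithfulness gives $S=1$.

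A secondary issue is that ${\rm Aut}^0(X)$ is an algebraic group. This holds for projective $X$ because ${\rm Aut}(X)$ is represented by an open subscheme of the Hilbert scheme of $X\times X$, which is locally of finite type, so its identity component is of finite type. This is where projectivity enters, and I would cite it as standard.
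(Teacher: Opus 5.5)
The paper states this theorem without proof, so your argument has to stand on its own. Its overall shape is right, but the step you flag as delicate is not actually proved, and everything hinges on it: you need a point whose stabilizer in $T\cong\mathbb{G}_m^n$ is finite. Your justification is circular. Commutativity shows that stabilizers are constant along a \emph{single} orbit, and you then use that this orbit is dense, which is exactly what has to be shown.

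Commutativity alone cannot give it. The group $\mathbb{G}_a^2$ acts faithfully on $\mathbb{A}^2$ by $(a,b)\cdot(x,y)=(x,y+a+bx)$. Its orbits are the lines $\{x={\rm const}\}$, and the stabilizer $\{a+bx=0\}$ of $(x,y)$ changes with $x$. So there is neither a generic stabilizer nor a dense orbit, although the group is commutative of dimension $\dim \mathbb{A}^2$. Invoking Rosenlicht does not repair this, since it yields a generic quotient, not a constant stabilizer. The bound $\dim T\le n$ in your first direction rests on the same unproved claim.

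What is missing is an input specific to tori. Over $\cc$ the quickest is a countability argument. The identity component of a positive-dimensional closed subgroup of $T$ is a nontrivial subtorus, so it contains a one-dimensional subtorus $S$. Such $S$ correspond to primitive cocharacters up to sign, so there are only countably many. If no point had finite stabilizer, then $X=\bigcup_S X^S$ would be a countable union of closed fixed loci. Since $\cc$ is uncountable and $X$ is irreducible, this forces $X^S=X$ for some $S$, contradicting faithfulness. Alternatively, you can pass to the normalization, use a $T$-stable affine chart ${\rm Spec}\,A$ from Sumihiro's theorem, and note that where all nonzero homogeneous generators of $A$ are nonvanishing, the stabilizer is the common kernel of their weights, which acts trivially.

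Once a point with finite stabilizer exists, its orbit is $n$-dimensional, hence open and dense. Your commutativity argument then correctly upgrades the stabilizer from finite to trivial, and step 3 goes through.

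For the first direction you can avoid the lemma entirely. An automorphism commuting with the dense torus $T_0$ preserves the open orbit and acts on it as translation by an element of $T_0$. Hence $T_0$ is its own centralizer, so it is a maximal torus of ${\rm Aut}^0(X)$, and maximal tori are conjugate.
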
 

Note that the previous theorem only works in the projective setting. 
In the affine setting, the group ${\rm Aut}^0(X)$ may not be algebraic, so the previous theorem does not make sense.
Theorem can be thought of as the definition of projective toric varieties. 
We can weaken the definition by imposing that ${\rm Aut}^0(X)$ has rank $r\geq 0$. 
This leads to the concept of {\em $\mathbb{T}$-varieties}. 
A {\em $\mathbb{T}$-variety} is a normal variety admitting an effective torus action. 
The $\mathbb{T}$ in the notation of $\mathbb{T}$-variety represents the acting algebraic torus.
The $\mathbb{T}$-complexity of a $\mathbb{T}$-variety $X$ equals $\dim(X)$ minus the dimension of the acting torus. 
Hence, a toric variety is nothing but a $\mathbb{T}$-variety of $\mathbb{T}$-complexity zero.
The theory of $\mathbb{T}$-varieties was initiated by Altmann, Hausen, and Suss in the seminal work~\cite{AH06,AHS08}. 
In these papers, the authors introduce combinatorial language to describe $\mathbb{T}$-varieties in terms
of their normalized Chow quotient.
They define the concept of {\em polyhedral divisors} and {\em divisorial fans}. 
A polyhedral divisor is a formal sum $\sum_{i=1}^k P_iD_i$ where the $D_i$'s are Weil divisors on a normal variety $Y$
while the $P_i$'s are polyhedral in a common $\qq$-vector space $N_\qq$. 
The main theorem of~\cite{AH06} states that a $\mathbb{T}$-variety $X$ can be encrypted by a polyhedral divisor 
on its normalized Chow quotient $Y$. 
The theory of $\mathbb{T}$-varieties is very useful to study varieties with torus actions.
It is particularly strong when the normalized Chow quotient is a curve; 
the so-called complexity one $\mathbb{T}$-varieties.
Complexity one $\mathbb{T}$-varieties play a central role in the theory as they are still mostly combinatorial objects.
Some extra difficulties arise when the normalized Chow quotient is a surface or a higher-dimensional variety. 
However, the theory of $\mathbb{T}$-varieties can still provide some interesting results in this direction. 
During the 2010s, there was a substantial effort to generalize theorems from toric geometry to $\mathbb{T}$-varieties. 
Of course, many properties of toric varieties, such as rationality, are not satisfied by general $\mathbb{T}$-varieties.
Nevertheless, a $\mathbb{T}$-variety is rational if and only if its normalized Chow quotient is rational. 
Hence, the main focus was always to find the right conditions on the normalized Chow quotient and polyhedral 
divisors to recover good properties of toric varieties. 
In this direction, many important results were obtained regarding; Cox rings~\cite{AP12}, singularities~\cite{LS13}, cohomology groups~\cite{LLM20}, Hodge theory~\cite{AVL14} , differential forms~\cite{PS11}, fundamental groups~\cite{LLM19}, etc. 

To conclude this section, we turn to a volume-theoretic characterization of projective toric varieties. 
In what follows, we write $\Omega_n$ for the volume form of the $n$-dimensional algebraic torus $\mathbb{G}_m^n$.
More precisely, if $\mathbb{G}_m^n={\rm Spec}(K[x_1^{\pm},\dots,x_n^{\pm}])$, then 
\[
\Omega_n:=\frac{dx_1\wedge \dots \wedge dx_n}{x_1\cdots x_n}.
\] 
The following theorem was proved in~\cite[Theorem 1.10]{EFM24} using the language of birational geometry that we explain below. 

\begin{theorem}\label{thm:vol-charact}
Let $X$ be a $n$-dimensional variety. 
The variety $X$ is toric if and only if there is an open embedding 
$\mathbb{G}_m^n\hookrightarrow X$ such that $\Omega_n$ has poles
at every prime component of $X\setminus \mathbb{G}_m^n$.
\end{theorem}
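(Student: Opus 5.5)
The forward direction is a direct computation on the fan. If $X$ is toric with fan $\Sigma$, each prime component of $X\setminus\mathbb{G}_m^n$ is a torus-invariant divisor $D_\rho$ attached to a ray $\rho$ with primitive generator $v_\rho$. In the affine chart $U_\rho\simeq \mathbb{A}^1\times\mathbb{G}_m^{n-1}$, choose coordinates with $D_\rho=\{y_1=0\}$. In these coordinates $\Omega_n=\pm \frac{dy_1\wedge\cdots\wedge dy_n}{y_1\cdots y_n}$, since a monomial change of coordinates in ${\rm GL}(n,\zz)$ preserves $\Omega_n$ up to sign. So $\Omega_n$ has a simple pole along $D_\rho$. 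Equivalently, $K_X+\sum_\rho D_\rho\sim 0$ with $\Omega_n$ as the trivializing log form.

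The converse is where the work lies. Suppose $\mathbb{G}_m^n\hookrightarrow X$ is an open embedding and $\Omega_n$ has a pole along every prime divisor $E$ in $X\setminus \mathbb{G}_m^n$. The plan is to show that every such divisor is toric, i.e., that its divisorial valuation ${\rm ord}_E$ restricted to $K(x_1,\dots,x_n)$ is a monomial valuation $v_{\vec a}$ for some primitive $\vec a\in\zz^n$. Then glue: the local rings of $X$ at generic points of boundary divisors are the toric ones, and $X$ is toric in codimension one. Normality (assumed for varieties here, or pass to the normalization) then lets the torus action extend across codimension $\ge 2$.

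The key input is the valuation-theoretic fact that for a divisorial valuation $E$ over $\mathbb{G}_m^n$, the log discrepancy $a_E(\mathbb{G}_m^n$-compactification$)$ is controlled by the order of $\Omega_n$. Fix a toric compactification $T\supset \mathbb{G}_m^n$ with toric boundary $\Delta_T$, so that $(T,\Delta_T)$ is log canonical and $K_T+\Delta_T\sim 0$ via $\Omega_n$. Then ${\rm ord}_E(\Omega_n)=a_E(T,\Delta_T)-1$, and $\Omega_n$ has a pole along $E$ exactly when $a_E(T,\Delta_T)=0$, i.e., when $E$ is a log canonical place of $(T,\Delta_T)$. The lc places of a toric pair are precisely the toric valuations. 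This follows by passing to a toric log resolution of $(T,\Delta_T)$ (e.g., a toric blow-up after which the center of $E$ lies in a snc stratum) and using that on a log smooth pair with reduced boundary the lc places are exactly the monomial valuations in the snc coordinates along strata. In the torus coordinates these are monomial, hence toric.

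The hard part is the extension step: showing the action $\mathbb{G}_m^n\times\mathbb{G}_m^n\to\mathbb{G}_m^n$ extends to an action on $X$, rather than only a birational one. I would do this by taking the fan $\Sigma$ of rays $\vec a_E$ together with the toric variety $X(\Sigma_1)$ built from those rays alone (dimension $\le 1$ cones). The birational map $X(\Sigma_1)\dashrightarrow X$ is then an isomorphism in codimension one, by comparing generic local rings. Using that $X$ is normal and separated, and that for each $t\in \mathbb{G}_m^n$ the translation $t\cdot$ is an automorphism in codimension one of $X$ preserving each boundary valuation, I would argue each $t$ extends to a regular automorphism. The main obstacle is exactly this extension in codimension $\ge2$ for a possibly non-affine, non-projective $X$: a small birational self-map need not be regular. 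Here I would use that $\mathbb{G}_m^n$ is connected and the set of $t$ for which $t\cdot$ is regular is a closed subgroup, or alternatively reduce to the affine toric charts via Sumihiro-type arguments. If that fails, I would fall back on the birational characterization in~\cite{EFM24}, where "toric" is understood as crepant birational to a toric model with all boundary being lc centers.
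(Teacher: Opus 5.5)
\paragraph{The extension step fails, and the converse is false as you state it.} The converse you set out to prove, for an arbitrary $X$, is false for non-complete $X$. The paper gives no proof of its own (it cites \cite[Theorem 1.10]{EFM24}), but right after the statement it gives the counterexample $U=\mathbb{G}_m\times\mathbb{A}^1\setminus\{(1,0)\}$. Here $U$ contains $\mathbb{G}_m^2$ and $\Omega_2$ has a pole along the unique boundary component, yet $U$ is not toric. The theorem is meant for projective $X$.

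Your argument never uses projectivity or completeness, so it has to fail somewhere, and running it on $U$ shows where. Your first two steps go through: the boundary divisor gives the toric valuation ${\rm ord}_y$, and $U$ is isomorphic in codimension one to $X(\Sigma_1)=\mathbb{G}_m\times\mathbb{A}^1$. But translation by $t=(t_1,t_2)$ is regular on $U$ only if $t_1=1$, since otherwise it sends $(t_1^{-1},0)\in U$ to the missing point. So the set of good $t$ is the proper, connected, closed subgroup $\{1\}\times\mathbb{G}_m$. ``Closed subgroup plus connected'' therefore gives nothing unless you already know this set is dense, which is the whole claim.

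The other tools you mention do not help either:
\begin{itemize}
\item Normality only lets Hartogs extend regular functions, not rational maps to a non-affine target.
\item Sumihiro presupposes an action.
\item Your fallback changes the conclusion: being crepant birational to a toric pair is log rationality, not toricity.
\end{itemize}

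\paragraph{How to close the gap for projective $X$.} Assume $X$ is projective and normal; then the step closes using what you have already proved. Let $E_1,\dots,E_r$ be the boundary components and $a_i\in\zz^n$ their primitive vectors. Since $X\setminus\bigcup_i E_i\simeq\mathbb{G}_m^n$ has trivial class group, ${\rm Cl}(X)$ is generated by the $E_i$. So you may choose an ample Cartier divisor of the form $A=\sum_i c_iE_i$.

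A section $f\in H^0(X,\mathcal{O}_X(mA))$ is regular on the torus, hence a Laurent polynomial $\sum_u\lambda_u\chi^u$. Because ${\rm ord}_{E_i}$ is the monomial valuation attached to $a_i$, one has ${\rm ord}_{E_i}(f)=\min_{\lambda_u\neq 0}\langle u,a_i\rangle$. So each $H^0(X,\mathcal{O}_X(mA))$ is spanned by characters, and the section ring $R(X,A)$ is compatibly $\zz^n$-graded. Hence $\mathbb{G}_m^n$ acts on $X\simeq{\rm Proj}\,R(X,A)$, extending translation.

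This is exactly where projectivity enters. By contrast, $U$ is quasi-affine, and its affine hull $\mathbb{G}_m\times\mathbb{A}^1$ is toric while $U$ is not.

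Alternatively, in the language of Section~\ref{sec:complexity}:
\begin{itemize}
\item Completeness gives $\mathcal{O}(X)^*=\cc^*$, which forces the $a_i$ to span $\qq^n$.
\item Hence $\dim{\rm Cl}_\qq(X)=r-n$, and the Calabi--Yau pair $(X,\sum_i E_i)$ has complexity zero.
\item Theorem~\ref{thm:charact-toric} then shows $X$ is toric.
\end{itemize}
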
 

Note again that the projectivity assumption in Theorem~\ref{thm:vol-charact} is necessary. 
Indeed, consider the variety $U:=\mathbb{G}_m\times \mathbb{A}^1-\{(1,0)\}$.
It is clear that we have a torus embedding $\mathbb{G}_m^2\hookrightarrow U$
and the volume form $\Omega_2$ has no zeros on $U$. 
However, looking at the fundamental group of $U$, which is isomorphic to $\zz$, 
we conclude that the only toric surfaces it could be isomorphic to are 
$\mathbb{G}_m\times \mathbb{A}^1$ or $\mathbb{G}_m\times \mathbb{P}^1$ (see, e.g.,~\cite[Theorem 12.1.5]{CLS11}).
The first case can be discarded by looking at invertible regular functions,
while the second case can be discarded by looking at higher cohomologies of the structure sheaf.
Theorem~\ref{thm:vol-charact} can be thought of as the definition of projective toric varieties. 

Our approach is similar to that of $\mathbb{T}$-varieties, 
in the sense that we focus on a class of algebraic varieties that emerge when relaxing the conditions on a characterization of toric varieties.
One can naturally weaken the hypothesis of Theorem~\ref{thm:vol-charact} and impose that $\Omega_n$ has no {\em zeros} on $X\setminus \mathbb{G}_m^n$, 
instead of asking that $\Omega_n$ has poles at every component.
This leads to the concept of cluster type varieties: 
\begin{definition}\label{def:cluster-type}
{\em A $n$-dimensional algebraic variety $X$ is said to be of {\em cluster type} if there is an embedding in codimension one $\mathbb{G}_m^n\dashrightarrow X$ such that $\Omega_n$ has no zeros on $X$.
The effective divisor $B={\rm Poles}(\Omega_n)$ on $X$ is called a {\em cluster type boundary}. The pair $(X,B)$ is called a {\em cluster type pair}.
}
\end{definition}
In the previous definition, an embedding in codimension one 
$\mathbb{G}_m^n\dashrightarrow X$ means that there exists a closed subset $Z\subset \mathbb{G}_m^n$ of codimension at least two for which there exists an embedding 
$\mathbb{G}_m^n\setminus Z \hookrightarrow X$. 
Note that every toric variety is a cluster type variety
and the reduced toric boundary is a particular instance
of a cluster type boundary.
Thus, a fixed variety may have multiple cluster type boundaries. Definition~\ref{def:cluster-type} appeared first in~\cite{EFM24}, however, Corti already gave a very similar definition with the extra assumption on the singularities of the poles of $\Omega_n$ in $X$ (see~\cite[Definition 2]{corti2023cluster}). We call these varieties {\em cluster type} as they share some geometric features with spectra of cluster algebras. Indeed, if $(X,B)$ is cluster type, then the variety $X\setminus B$ resembles the spectrum of a cluster algebra. The word {\em type} in algebraic geometry is often used to emphasize that some phenomena happen up to some boundary divisor. 
Examples of these are the definitions of {\em Fano type}, {\em klt type}, and {\em Calabi--Yau type}, which reflect the definition of Fano, klt, and Calabi--Yau, respectively, up to adding a boundary divisor $B$ to the variety $X$ (see Definition~\ref{def:ft}, Definition~\ref{def:klt-type}, and Definition~\ref{def:cy-type} below). We will see that if $(X,B)$ is of cluster type, then $X\setminus B$ is a cluster of algebraic tori up to a closed subset of codimension two. 
In the next section, we briefly review the history of cluster geometry, i.e., cluster algebras and their spectra. 
Then, we show some basic properties of cluster type varieties and focus on the case of surfaces.

\section{Cluster geometry}
\label{sec:cluster-geometry}

The origins of cluster geometry lie in the work of Lusztig, Fomin, and Zelevinsky. 
In the mid 1990s Lusztig discovered some connections between total positivity and quantized enveloping algebras~\cite{Lus90}.
In~\cite{FZ99}, Fomin and Zelevinsky discovered some combinatorial structures on the coordinate ring of double Bruhat cells that would later become cluster algebras.
The formal definition of cluster algebras was introduced in Fomin and Zelevinsky's seminal papers~\cite{FZ02i,FZ03ii,FZ05iii,FZ07iv}.
The initial aim of this work was to understand dual canonical bases and total positivity in semisimple Lie groups.
They introduced the concept of mutations; an innovative process to generate {\em cluster variables} from an initial {\em seed}. One surprising aspect was that all cluster variables turned out to be Laurent polynomials in the initial seed; this is known as the Laurent phenomenon.
Fock and Goncharov brought geometric ideas to the field
introducing cluster varieties as a way to study higher Teichm\"uler theory and mirror symmetry~\cite{Fock2009cluster}. 
Some further developments of cluster algebras are the categorification of cluster algebras~\cite{BMRRT06}, the geometric realization via triangulation of surfaces~\cite{FST08,FT18}, and the classification of finite type cluster algebras~\cite{FZ03ii}.
In~\cite{GHK15birational}, Gross, Hacking, and Keel prove that cluster algebras are Cox rings of suitable volume-preserving blow-ups of toric varieties. Using this, they provide some counter-examples to conjectures due to Fock and Goncharov regarding dual canonical bases.
In~\cite{GHKK18}, Gross, Hacking, Keel, and Kontsevich prove that cluster algebras have natural bases given by theta divisors, proving a corrected version of the Fock-Goncharov dual basis conjecture. Nowadays, cluster algebras have become a standard tool across several fields of mathematics. 

The definition of cluster algebras is outside the scope of this survey. We refer the reader to~\cite[Definition 2.3]{FZ02i} for the formal definition of a cluster algebra.
However, we do focus on the geometric aspects of such algebras. One of the main geometric features of cluster algebras is the following. Given a $n$-dimensional cluster algebra $R$, the corresponding {\em cluster variety} $U={\rm Spec}(R)$ is covered by copies of $\mathbb{G}_m^n$, up to a closed subset $Z\subset U$ of codimension at least two. 
Moreover, given a birational map $\phi\colon \mathbb{G}_m^n\dashrightarrow \mathbb{G}_m^n$ between two such torus embeddings, the map $\phi$ preserves the volume form, i.e., $\phi^*\Omega_n=\Omega_n$ up to scalar. The following is proved in~\cite[Theorem 1.3.(3)]{EFM24} and~\cite[Theorem 23]{corti2023cluster}.

\begin{theorem}\label{thm:structural-thm-ct}
Let $X$ be a cluster type variety and $B\subset X$ be its cluster type boundary. Then, up to a closed subset of codimension at least two, the open subvariety $U:=X\setminus B$ is covered by images of embeddings in codimension one $\mathbb{G}_m^n \dashrightarrow X$. 
Furthermore, for any two embeddings in codimension one 
$j_1\colon \mathbb{G}_m^n\dashrightarrow X$ and 
$j_2\colon \mathbb{G}_m^n\dashrightarrow X$ the birational map $j_2^{-1}\colon j_1\colon \mathbb{G}_m^n \dashrightarrow \mathbb{G}_m^n$ preserves the volume form $\Omega_n$.
\end{theorem}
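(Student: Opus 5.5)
The second statement (volume preservation) is the easier one, so I would prove it first and then use it for the covering statement. Fix two embeddings in codimension one $j_1, j_2\colon \mathbb{G}_m^n\dashrightarrow X$, where $j_1$ is the embedding from Definition~\ref{def:cluster-type}. Then $\omega:=(j_1)_*\Omega_n$ is a rational top form on $X$ with no zeros and poles exactly along $B$. In divisor language, $K_X+B\sim 0$ and $\omega$ generates $\mathcal{O}_X(K_X+B)$ away from codimension two.

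For the volume preservation, consider the pullback $\eta:=j_2^*\omega$, a rational top form on $\mathbb{G}_m^n$. Since $j_2$ is an isomorphism onto its image away from codimension two in $\mathbb{G}_m^n$, and $\omega$ has no zeros on $X$, the form $\eta$ has no zeros on $\mathbb{G}_m^n$. Its poles lie in $j_2^{-1}(B)$. The key point is that $j_2^{-1}(B)\cap \mathbb{G}_m^n$ should be empty in codimension one. Here I would use that $\omega$ has poles along $B$, while a divisor of $\mathbb{G}_m^n$ mapping onto a component of $B$ would make $\eta$ have a pole on a divisor of the torus. I would rule this out via the global divisor: $\operatorname{div}(\eta)=-D$ with $D\geq 0$ supported on some hypersurface of the torus. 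But $\operatorname{div}(\eta)\sim K_{\mathbb{G}_m^n}\sim 0$, and $\mathbb{G}_m^n$ has trivial class group, so $D=\operatorname{div}(f)$ for some $f$, i.e.\ $\eta=f^{-1}\Omega_n$ with $f$ regular. I would then compare with a compactification to see that $f$ must be a unit.

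\textbf{I expect this last step to be the main obstacle.} The clean way is to pass to a common log resolution $(Y,B_Y)$ of $(X,B)$ and of toric compactifications of both tori. Via the discrepancy description from Section~\ref{sec: bir-geom}, the crepant pullback of $(X,B)$ is the log Calabi--Yau pair determined by $\omega$. Then $\Omega_n$ and $\eta$ define the same crepant structure over the open torus $j_2(\mathbb{G}_m^n)$, and no zeros forces the coefficient of every divisor meeting the torus to be $0$. Hence $f$ is a unit, so $f=c\,x^m$. Using that $\omega$ has no zeros on the toric compactification, together with symmetry between $j_1$ and $j_2$, the monomial must be trivial, giving $j_2^{-1}\circ j_1$ preserving $\Omega_n$ up to scalar.

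For the covering statement, I would argue as follows. For every point $x\in U$ off a codimension-two set, I would produce a torus chart through $x$. Take a component of the complement of $j_1(\mathbb{G}_m^n)$ in $U$; these are divisors $E\subset U$ along which $\omega$ has neither zero nor pole, i.e.\ log discrepancy $1$ non-boundary divisors. Following the Gross--Hacking--Keel picture, $E$ arises from a toric model blow-up. I would then find a toric model $X'\to \bar{T}$ in which $E$ is exceptional over a hypersurface of a boundary divisor. A mutation, i.e.\ an elementary volume-preserving transformation $(x_i)\mapsto$ with $x_k\mapsto x_k\cdot g$, gives a new torus chart whose image meets the generic point of $E$. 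Since finitely many divisors $E$ occur, finitely many charts cover $U$ up to codimension two. Producing these toric models relies on the birational results of Section~\ref{sec: bir-geom}, which I would invoke rather than reprove.
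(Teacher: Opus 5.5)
The survey gives no proof of this statement; it cites \cite[Theorem 1.3.(3)]{EFM24} and \cite[Theorem 23]{corti2023cluster}. So I judge your argument on its own terms, and each half has a genuine gap.

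\emph{Volume preservation.} Your ``key point'', that $j_2^{-1}(B)$ is empty in codimension one, cannot be derived from the hypotheses you use. Take $X=\mathbb{P}^2$, $B=H_0+H_1+H_2$, and let $j_2$ be the complement of three other general lines. Then $j_2$ is an open embedding. In homogeneous coordinates $y$ adapted to the new lines, with $x_i=\ell_i(y)$, one gets $j_2^*\omega=c\,\frac{y_0y_1y_2}{\ell_0\ell_1\ell_2}\,\Omega_2$. So your $f$ is regular but not a unit, and the linear map $j_2^{-1}\circ j_1$ does not preserve $\Omega_2$. The statement is only true for charts of $U=X\setminus B$, and that must enter as a hypothesis. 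Your sentences ``compare with a compactification to see that $f$ must be a unit'' and ``$\Omega_n$ and $\eta$ define the same crepant structure'' simply assert it.

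Once $j_2$ lands in $U$, $\eta=c\,x^m\Omega_n$ is immediate, but your reason for $m=0$ does not work. ``$\omega$ has no zeros'' concerns divisors of $X$. The toric divisors $D_w$ of a compactification of the second torus are typically exceptional over $X$, where $\omega$ may a priori vanish. The inequality available to you goes the other way. Since $(X,B)$ is crepant birational to a toric pair, it is log canonical, so $\mathrm{ord}_{D_w}(\omega)=a_{D_w}(X,B)-1\ge -1$. On the other hand $\mathrm{ord}_{D_w}(c\,x^m\Omega_n)=\langle w,m\rangle-1$. Choosing $w$ with $\langle w,m\rangle<0$ forces $m=0$. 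If both charts are assumed to have polar divisor exactly $B$, it is even simpler: $\omega_1/\omega_2$ has trivial divisor on the normal projective $X$, hence is constant.

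\emph{Covering.} Mutations are the right tool, but you defer the step that carries all the content: finding a toric model where $E$ lies over a hypersurface of a single boundary divisor, and identifying $E$ there. You send it to the Gross--Hacking--Keel picture, which is two-dimensional, and to Section~\ref{sec: bir-geom}, which contains no such result. The missing argument runs as follows.
(i) On a toric model $(T,B_T)$ the center of $E$ lies in $B_T$. Otherwise $a_E(T,B_T)=1$ would make $E$ a divisor of the torus, since exceptional divisors over a smooth variety have log discrepancy $\ge 2$, and then $E$ would meet the chart.
(ii) Hence $w\colon m\mapsto v_E(x^m)$ is a nonzero point of $N$. After a crepant toric blow-up adding the ray $\rho=\mathbb{R}_{\ge0}w$, write $w=ku$ with $u$ primitive. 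The center $Z$ of $E$ then meets the open orbit of $D_\rho$, where $T\cong\mathbb{A}^1_x\times\mathbb{G}_m^{n-1}$.
(iii) Write $Z=\{x=z_2=\dots=z_c=0\}$ near its generic point. Log canonicity of the snc pair $(T,D_\rho+\sum_{i\ge2}\{z_i=0\})$ gives $1=a_E(T,D_\rho)\ge c-1$. Hence $Z=\{x=f=0\}$ with $f\in\mathbb{C}[u^\perp\cap M]$ irreducible, and $v_E(f)=1$. So $E$ is an lc place of the snc pair $(T,\{xf=0\})$, i.e.\ the monomial valuation with weights $(k,1)$ on $(x,f)$.
(iv) The mutation $y=xf^{-k}$ preserves the volume form, because $df$ involves only the other coordinates. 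It makes $E$ the divisor $\{f=0\}$ of the new torus. The new chart is still an embedding in codimension one into $U$, because every other divisor of the new torus comes from a divisor of the old one.

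Without (iii), nothing guarantees that any mutation reaches $E$.
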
 

Thus, from the geometric perspective cluster varieties
and cluster type varieties are quite similar. 
In both cases, we have a cluster of algebraic tori, and the birational transition maps between them are volume-preserving.
The main difference is the formulas defining these birational transition maps.
Indeed, in the case of cluster varieties, the birational maps 
$\mathbb{G}_m^n \dashrightarrow \mathbb{G}_m^n$ can be decomposed into simple {\em mutations} of the form 
\begin{equation}\label{eq:mutation} 
(x_1,\dots,x_n) \mapsto \left(
x_1,\dots,\frac{p(x_1,\dots,\hat{x_i},\dots,x_n)}{x_i},\dots,x_n 
\right)
\end{equation} 
where $p$ is a suitable irreducible binomial whose exponents are determined by a skew-symmetric matrix (see~\cite[Proposition 4.3]{FZ02i} for the details). 
In the case of cluster type varieties the birational maps between torus charts can be decomposed into similar birational maps as in~\eqref{eq:mutation}. However, in the cluster type case, the polynomial $p$ may be arbitrary, possibly reducible.
From this perspective, the class of cluster type varieties is much closer to the notion of Laurent phenomenon algebras introduced by Lam and Pylyavskyy~\cite{LP16}.
There are examples in which $(X,B)$ is a cluster type pair and $\mathcal{O}(X\setminus B)$ is neither a cluster algebra nor a Laurent phenomenon algebra (see~\cite[Example 2.11]{EFMS25}). 
We will discuss the structure of these rings in Section~\ref{sec:ct-and-cox-rings}.

Of course, the only normal projective cluster type curve is the projective line $\mathbb{P}^1$. 
For the rest of this section, we will focus on trying to understand the geometry of cluster type surfaces. 
We only focus on normal projective varieties. 

First, assume that $X$ is a smooth projective surface. 
First, let's assume that $X$ is of cluster type. 
Then, we have an embedding in codimension one 
$\mathbb{G}_m^2 \dashrightarrow X$. 
In the case of surfaces, this means that we have an embedding 
$\mathbb{G}_m^2 \setminus \{p_1,\dots,p_r\}\hookrightarrow X$. 
Let $\Omega_2$ be the volume form on $\mathbb{G}_m^2$ 
which can also be regarded as a rational $2$-differential form on $X$. As $\Omega_2$ does not have zeros on $X$ and 
$X$ is a smooth projective surface, we conclude that $\Omega_2$ defines a non-trivial divisor of poles $B$. 
Thus, we conclude that $B\in |-K_X|$.
This argument does not really use the dimension of $X$; every 
cluster type variety with mild singularities satisfies $|-K_X|\neq \emptyset$.
We try to understand the geometry of the curve $B$. 
Note that we have a commutative diagram 
\[
\xymatrix{
 & \mathbb{G}_m^2 \ar@{-->}[ld]_-{j_1}\ar@{-->}[rd]^-{j_2} & \\ 
\pp^2 \ar@{-->}[rr]^-{\pi} & & X 
}
\]
where $j_1$ and $j_2$ are embeddings in codimension one and $\pi$ is a birational map. Let $p\colon Y\rightarrow X$ be a sequence of blow-ups of $X$ for which there is a projective birational morphism 
$q\colon Y\rightarrow \pp^2$, i.e., $p$ and $q$ resolve the birational map $\pi$. 
Write $p^*(K_X+B)=K_Y+B_Y+E-F$ where $B_Y$ is the strict transform of $B$ in $Y$, and $E,F\geq 0$ are $p$-exceptional divisors. We may assume that $E$ and $F$ have no common components. 
Note that $K_X+B\sim 0$, so $K_Y+B_Y+E-F\sim 0$.
Let $q_*(K_Y+B_Y+E-F)=K_{\pp^2}+q_*B_Y+q_*E-q_*F$.
Observe that the divisor $q_*B_Y+q_*E-q_*F$ must be supported on $\pp^2\setminus j_1(\mathbb{G}_m^2)$.
By the negativity lemma~\cite[Proposition 3.39]{KM98}, we conclude that 
$q_*B_Y+q_*E-q_*F\sim -K_{\pp^2}$ and 
$q_*B_Y+q_*E-q_*F$ is supported along the coordinate hyperplanes of $\pp^2$. 
There is just one effective divisor that satisfies this property;
namely the reduced sum of the coordinate hyperplanes
$H_0+H_1+H_2$. Thus, we conclude that 
\begin{equation}\label{eq:boundary-agree}
q_*B_Y+q_*E-q_*F = H_0+H_1+H_2.
\end{equation}
From equality~\eqref{eq:boundary-agree}, we conclude that $F$ must be contracted in $\pp^2$. 
In other words, the blow-ups that extracted $F$ were not really necessary. Thus, we may replace $Y$ by another birational model in which no such curve in $F$ is extracted. By doing so, we may assume that $F=0$. 
Similarly, we conclude that $E$ is a reduced divisor. 

We turn to analyze the birational map from the perspective of $\pp^2$.
Let $K_Y+\Gamma_Y=q^*(K_{\pp^2}+H_0+H_1+H_2)$. 
Applying the negativity lemma again, we see that 
$\Gamma_Y=B_Y+E$. 
Thus, the birational morphism $Y\rightarrow \pp^2$ only consists of blow-ups along the boundary divisor $H_0+H_1+H_2$. 
From this, we conclude that $B_Y+E$ is a cycle of smooth rational curves. 
The birational map $Y\rightarrow X$ is a sequence of contractions of smooth rational curves. 
Each curve contracted by $Y\rightarrow X$ satisfies $K_Y\cdot C=-1$ and so there are two possibilities for $C$:
\begin{enumerate}
\item $C$ is a smooth rational curve that intersects $B_Y+E$ transversally ; or 
\item $C$ is a component of $B_Y+E$.
\end{enumerate} 
In the first case, contracting $C$ does not change the isomorphism class of $B_Y+E$. In the second case, the cycle of smooth rational curves loses one curve or it becomes a nodal rational curve.
We conclude that $B$ is either a cycle of smooth rational curves or a nodal rational curve. 
We conclude the following proposition.

\begin{proposition}\label{prop:nodal-curve}
Let $X$ be a smooth projective surface. 
If $X$ is of cluster type, then the poles of the volume form of the algebraic torus is an anti-canonical nodal curve.
\end{proposition}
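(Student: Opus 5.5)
The plan is to follow the argument sketched just before the statement, organized into three steps: compare $X$ with $\pp^2$ through a common resolution, show the resolution is a toric-boundary blow-up of $(\pp^2,H_0+H_1+H_2)$, and then track the boundary as we contract down to $X$.

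\textbf{Step 1: set-up.} Let $j_2\colon \mathbb{G}_m^2\dashrightarrow X$ be the embedding in codimension one, and let $B$ be the divisor of poles of $j_{2*}\Omega_2$. Since $X$ is smooth and $\Omega_2$ has no zeros on $X$, we have $K_X+B\sim 0$, so $B\in|-K_X|$. Compose $j_2$ with the inverse of the standard open embedding $j_1\colon\mathbb{G}_m^2\hookrightarrow\pp^2$ to get a birational map $\pi\colon\pp^2\dashrightarrow X$. Resolve $\pi$ by a smooth surface $Y$, with a sequence of point blow-ups $p\colon Y\to X$ and a morphism $q\colon Y\to \pp^2$. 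Write
\[
p^*(K_X+B)=K_Y+B_Y+E-F,
\]
where $E,F\geq 0$ are $p$-exceptional with no common components.

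\textbf{Step 2: $Y\to\pp^2$ only blows up boundary nodes.} Pushing forward by $q$ gives a divisor linearly equivalent to $-K_{\pp^2}$ and supported off $j_1(\mathbb{G}_m^2)$. It is therefore a combination of $H_0,H_1,H_2$ of total degree $3$. I would argue via the negativity lemma and the fact that $\Omega_2$ has simple poles along each $H_i$ that it equals $H_0+H_1+H_2$. Consequently every component of $F$ is $q$-exceptional, and every component of $E$ has coefficient one. I then remove unnecessary blow-ups so that $F=0$.

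Next, compare $K_Y+B_Y+E$ with $q^*(K_{\pp^2}+H_0+H_1+H_2)$. Both are crepant pullbacks of $\Omega_2$, so they agree, and $B_Y+E$ is the log pullback of the toric boundary. Since $B_Y+E$ is reduced, every $q$-exceptional curve has log discrepancy $0$ with respect to $(\pp^2,\sum H_i)$. This forces each blow-up in $q$ to be centered at a node of the total transform of the boundary. Blowing up a node of a cycle of smooth rational curves yields again a cycle (for a cycle of length one or two, a nodal curve or a pair of curves meeting in two points). By induction, $B_Y+E$ is a cycle of smooth rational curves.

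\textbf{Step 3: descend to $X$.} The map $p$ is a composition of contractions of $(-1)$-curves $C$ on intermediate smooth surfaces $Y_k$, each carrying the crepant boundary $\Gamma_k$ (the pushforward of $B_Y+E$). I would argue inductively that $\Gamma_k$ is a cycle of smooth rational curves or a nodal rational curve, and that $C$ is either
\begin{enumerate}
\item not in $\Gamma_k$, in which case $\Gamma_k\cdot C=-K\cdot C=1$, so $C$ meets $\Gamma_k$ transversally at one smooth point and the contraction leaves the combinatorial type unchanged; or
\item a component of $\Gamma_k$, in which case crepancy forces $C$ to meet the rest of the cycle in exactly two points (or one point of multiplicity two). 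Contracting $C$ shortens the cycle, and a cycle of length two becomes a nodal irreducible curve.
\end{enumerate}
The hard point is excluding contraction of a component of a nodal irreducible boundary, or of a $(-1)$-curve meeting the boundary tangentially. The intersection number $\Gamma_k\cdot C=1$ already rules these out. The remaining care is that the nodes stay ordinary nodes rather than becoming tacnodes. I expect this local check to be the main obstacle, and would handle it with the crepant condition (log canonical, coefficient one, and discrepancy $0$ of the contracted curve). At the end, $B=p_*(B_Y+E)$ is an anticanonical nodal curve, as claimed in Proposition~\ref{prop:nodal-curve}.
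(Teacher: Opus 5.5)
Your overall strategy is the paper's. You pass to a common resolution $Y$, identify $q_*(B_Y+E-F)$ with $H_0+H_1+H_2$, arrange $F=0$, read off the boundary on $Y$ from the $\pp^2$ side, and then contract down to $X$, separating curves inside and outside the boundary. Step 3 matches the paper's two cases. The node check you flag as the main obstacle is harmless: the neighbours of a contracted boundary $(-1)$-curve $C$ meet $C$ transversally at distinct points of $C\cong\pp(T_xX)$, so their images have distinct tangent directions at $x$.

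The genuine problem is in Step 2. The claim that ``since $B_Y+E$ is reduced, every $q$-exceptional curve has log discrepancy $0$ with respect to $(\pp^2,\sum H_i)$'' is false, and so is the consequence that every blow-up in $q$ is centred at a node. A $q$-exceptional curve need not be a component of $B_Y+E$. It can have coefficient $0$ in $\Gamma_Y$, i.e.\ log discrepancy $1$: this happens for a $p$-exceptional curve of coefficient $0$, or for a curve of $X$ not contained in $B$. These are exactly the curves of your own case (1) in Step 3. Concretely, let $X={\rm Bl}_x\pp^2$ with $x\in H_0$ not a node, and take the obvious torus. Then $B$ is the strict transform of $H_0+H_1+H_2$, and you may take $Y=X$ and $p={\rm id}$. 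The map $q$ blows up the smooth boundary point $x$, and its exceptional curve is not in $B$, so your induction over node blow-ups does not cover this case.

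The repair is what the paper means by ``$Y\to\pp^2$ only consists of blow-ups along the boundary divisor''. Factor $q$ into point blow-ups. The exceptional curve of the blow-up at $x$ gets coefficient $\mathrm{mult}_x\Gamma-1$ in the log pull-back, and this equals the coefficient of its strict transform in $\Gamma_Y=B_Y+E$. That coefficient lies in $\{0,1\}$: it is $\geq 0$ because you arranged $F=0$, and $\leq 1$ because the toric pair is lc. Hence every centre lies on the coefficient-one part at that stage, and there are two cases:
\begin{itemize}
\item the centre is a node, the new curve has coefficient $1$, and the cycle gets one component longer;
\item the centre is a smooth point of the boundary, the new curve has coefficient $0$, and the coefficient-one part is unchanged up to isomorphism.
\end{itemize}
Inducting over both cases, not just node blow-ups, shows that $B_Y+E$ is a cycle of smooth rational curves of length at least $3$. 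With that correction, your Step 3 goes through as in the paper.

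There is also a shortcut that makes the local check in Step 3 unnecessary. $(X,B)$ is lc, being crepant birational to the toric pair, and a reduced lc curve on a smooth surface has only ordinary nodes. Coregularity zero, which is preserved under crepant birational maps, then forces at least one node.
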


Note that the fact that $B\in |-K_X|$ is nodal implies the two cases mentioned above. Whenever we say that a curve is nodal, we mean that it has at least one node, otherwise we will simply say that the curve is smooth. 
From Proposition~\ref{prop:nodal-curve}, we conclude that a cluster type surface is a rational surface $X$ admitting a nodal curve $B$ on its anti-canonical system $|-K_X|$. 
This is precisely the definition of Looijenga pairs:

\begin{definition}\label{def:Looijenga-pair}
{\em 
A pair $(X,B)$ of a smooth rational projective surface $X$
and a curve $B\in |-K_X|$ is said to be a {\em Looijenga pair} if $B$ is nodal. 
}
\end{definition} 

Looijenga pairs were introduced to study the smoothability of cusps singularities~\cite{Loo81}. Friedman and Scattone further explored the relation between Looijenga pairs and degenerations of K3 surfaces~\cite{FS86}. Nowadays, Looijenga pairs are an important case of study in mirror symmetry~\cite{GHK15}, moduli theory~\cite{AAB24}, and cluster algebras~\cite{GHK15birational}.
In~\cite{GHK15}, Gross, Hacking, and Keel study the mirrors of Looijenga pairs. They introduce the following notions of {\em toric blow-ups} and {\em strong toric models}\footnote{In~\cite{GHK15}, these are called toric models. However, we will use that denomination for some weaker notion in higher dimensions.} for Looijenga pairs: 

\begin{definition}
{\em
Let $(X,B)$ be a Looijenga pair. A {\em toric blow-up} of $(X,B)$ is a projective birational morphism 
$\pi\colon X'\rightarrow X$ such that if $B'$ is the reduced scheme structure of $\pi^{-1}(B)$, then $(X',B')$ is again a Looijenga pair. 
}
\end{definition} 

In the setting of the previous definition, we may say that 
$(X',B')\rightarrow (X,B)$ is a toric blow-up.
It is easy to see that blowing up the nodes of $B$ gives toric blow-ups. Indeed, a toric blow-up of a Looijenga pair is nothing but a sequence of blow-ups of nodes. 
These are called toric blow-ups as they are isomorphic to 
toric blow-ups $T\rightarrow \mathbb{A}^2$ locally near the node. 

\begin{definition}\label{def:strong-toric-model}
{\em
A {\em strong toric model} of a Looijenga pair $(X,B)$ is a projective birational morphism $\pi\colon X\rightarrow X'$ such that:
\begin{enumerate}
\item $X'$ is a toric surface; and 
\item if $B'$ is a torus invariant divisor of $X'$, then
$B\rightarrow B'$ is an isomorphism. 
\end{enumerate}
}
\end{definition}

In the previous definition, we may say that $(X,B)\rightarrow (X',B')$ is a strong toric model for the Looijenga pair. Note that not all Looijenga pairs admit strong toric models in the sense of Definition~\ref{def:strong-toric-model}. 
For instance, consider the pair $(\pp^2,C_3)$ where $C_3$ is a nodal cubic. The only projective birational morphism $\pp^2\rightarrow T$ is the identity and $C_3$ is not torus invariant. 
However, a beautiful result due to Gross, Hacking, and Keel shows that every Looijenga pair admits a strong toric model after possibly performing a sequence of toric blow-ups (see~\cite[Proposition 1.3]{GHK15}). 

\begin{proposition}\label{prop:Looijenga-pairs-are-cluster-type}
Let $(X,B)$ be a Looijenga pair. Then, there exists a toric blow-up $p\colon (Y,B_Y)\rightarrow (X,B)$ and a strong toric model
$q\colon (Y,B_Y)\rightarrow (T,B_T)$. 
\end{proposition}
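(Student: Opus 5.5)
The plan is to reduce to an explicit analysis on a minimal rational surface, using two observations about $(-1)$-curves on a Looijenga pair $(X,B)$.

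First, let $E\subset X$ be a $(-1)$-curve that is not a component of $B$. Since $B\in|-K_X|$, we have $B\cdot E=-K_X\cdot E=1$. So $E$ meets $B$ transversally in exactly one point, and that point is a smooth point of $B$. Contracting $E$ therefore gives a Looijenga pair $(X_1,B_1)$ with $B\cong B_1$. Conversely, blowing up a smooth point of the boundary and keeping the strict transform of $B$ preserves the Looijenga condition. Such blow-ups are the ``non-toric'' (interior) blow-ups.

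Second, blowing up a node of $B$ and taking the reduced total transform is exactly a toric blow-up. The statement then says: after finitely many node blow-ups, $X$ is obtained from a toric pair $(T,B_T)$ by blowing up smooth points of the toric boundary (possibly infinitely near, but always at smooth boundary points) while keeping strict transforms. In that case $B_Y\to B_T$ is an isomorphism, as required.

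Since $X$ is rational, choose a birational morphism $f\colon X\to S$ to $S=\pp^2$ or a Hirzebruch surface $\mathbb{F}_n$, factored as a sequence of point blow-ups, and set $D=f_*B$. I would check three points.
\begin{enumerate}
\item $D\in|-K_S|$ is nodal, so $(S,D)$ is a Looijenga pair.
\item Each blown-up point lies on the current image of the boundary. This follows because the total discrepancy is zero: $K_X+B=f^*(K_S+D)$, and blowing up a point off the boundary would force a component with coefficient $-1$.
\item Each centre is either a smooth point of the boundary or a node. In the node case the step is itself a toric blow-up of the intermediate pair.
\end{enumerate}
The idea is to push all the node blow-ups to the beginning. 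Blowing up a node on $X$ first, which is a toric blow-up of $(X,B)$, corresponds on the downstairs sequence to performing that node blow-up earlier. Blow-ups at distinct points commute. When a smooth-point blow-up is followed by a blow-up at a node created on its exceptional curve, I would instead perform toric blow-ups at the corresponding nodes of $X$ so as to reorder. After this step $Y\to X$ is a toric blow-up and $Y\to S'$ consists only of blow-ups at smooth boundary points, where $S'\to S$ is a sequence of node blow-ups.

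It then remains to show that $(S',D')$ itself admits a strong toric model after further toric blow-ups. This is a finite case check on minimal surfaces. For $(\pp^2,\text{nodal cubic})$, blow up the node: the exceptional curve and the strict transform give a $2$-cycle on ${\rm Bl}_p\pp^2$. Blow up one more node, then contract suitable interior $(-1)$-curves, namely strict transforms of lines through the node tangent to the branches. This produces a toric surface whose boundary is a cycle of three or four curves. The other cases on $\pp^2$ (conic plus line, triangle) and on $\mathbb{F}_n$ (a cycle built from sections and fibres) are handled the same way. The guiding principle is: make the cycle have enough components, then find interior $(-1)$-curves meeting boundary components of the wrong self-intersection. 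The numerical target is that a cycle $D_1+\dots+D_k$ on a surface of Picard rank $k-2$ is toric exactly when the number of components exceeds the Picard rank by two.

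I expect the main obstacle to be the reordering step: guaranteeing that the node blow-ups really commute past the interior blow-ups. An interior blow-up followed by a blow-up at a node of the new boundary does not commute naively. I would handle this by induction on the number of blow-ups in $X\to S$. At each step where a node of the current image is blown up, replace $X$ by its toric blow-up at the corresponding node of $B$. This only adds exceptional curves of the boundary and keeps the morphism to the minimal model. The induction then terminates because the number of interior blow-ups does not increase.
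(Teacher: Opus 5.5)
The paper gives no proof of its own here; it cites \cite[Proposition 1.3]{GHK15}. Your reduction to a minimal surface is sound in substance, and easier than you expect. A blow-up at a smooth boundary point creates no new node: its exceptional curve gets coefficient $0$, so it is not in the boundary, and it meets the strict transform of the boundary in one smooth point. Hence any node blown up later in $X\to S$ lies over a point different from every earlier interior centre, and the two blow-ups commute. So $f$ itself factors as $X\to S'\to S$, with only node blow-ups in $S'\to S$ and only interior blow-ups in $X\to S'$. No modification of $X$ is needed, and the configuration you single out as the main obstacle (a node on the exceptional curve of an interior blow-up) never occurs. You should, however, state three routine facts you use implicitly:
\begin{itemize}
\item toric blow-ups of $(S',D')$ lift to toric blow-ups of $(X,B)$, compatibly with strong toric models, because $X\to S'$ is an isomorphism near the nodes;
\item two toric blow-ups of $(S,D)$ have a common toric blow-up (refine the local fans at each node);
\item a strong toric model survives further toric blow-ups.
\end{itemize}
Only with these does the minimal case imply the general one.

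The genuine gap is that the minimal case is asserted rather than proved, and it is the whole content of the proposition.

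Your one explicit computation is off. Suppose you blow up the node $p$ of the cubic and then only one of the two nodes $q_1,q_2$ of $\tilde C+E$. Then only one tangent line becomes an interior $(-1)$-curve, and contracting it leaves Picard rank $2$ with three boundary curves, which is not toric. You need to blow up both $q_1$ and $q_2$. With $e_1,e_2,e_3$ the exceptional classes over $p,q_1,q_2$, the curves $H-e_1-e_2$ and $H-e_1-e_3$ are then disjoint interior $(-1)$-curves. Contracting them gives $\mathbb{F}_3$ with boundary self-intersections $3,0,-3,0$.

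More seriously, ``handled the same way'' hides the real argument, and describing the $\mathbb{F}_n$ cases as cycles of sections and fibres is incomplete. Irreducible nodal members of $|-K_{\mathbb{F}_n}|$ for $n\leq 2$ are bisections for every ruling. Moreover $n$ is unbounded, so you need a uniform mechanism for producing interior $(-1)$-curves. Elementary transformations provide one:
\begin{itemize}
\item Suppose $D=\Sigma_1+\Sigma_2+F_1+\dots+F_j$, with $\Sigma_i$ sections and $F_l$ fibres of some ruling. Then $\Sigma_1\cdot\Sigma_2=2-j$. The fibre through a point of $\Sigma_1\cap\Sigma_2$ is not in $D$, since it would create a triple point. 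After blowing up that node, its strict transform is an interior $(-1)$-curve. Contracting it keeps $\rho=2$ and adds one boundary component. After $2-j$ steps the complexity $2+\rho-k$ is zero, and the pair is toric by Theorem~\ref{thm:charact-toric}.
\item If $D$ contains a bisection for every ruling, then $n\leq 2$. On $\mathbb{F}_0$ and $\mathbb{F}_2$, blow up the node of $D$ and contract the fibre through it to reach $\mathbb{F}_1$. On $\mathbb{F}_1$ the $(-1)$-section meets $D$ once and can be contracted, landing in the $\pp^2$ cases.
\end{itemize}
Without these steps, your proposal proves only the formal reduction, not the proposition.
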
 

Note that $X\setminus B \simeq Y\setminus B_Y$ by construction.
On the other hand, we have $T\setminus B_T\simeq \mathbb{G}_m^2$ and hence we have an embedding 
$\mathbb{G}_m^2 \hookrightarrow Y\setminus B_Y$. 
Therefore, there is an embedding $\mathbb{G}_m^2\hookrightarrow X\setminus B$. In this case, the volume form $\Omega_2$ of the torus does not vanish on $X$ and $B$ is the set of poles of $\Omega_2$. Indeed, similarly to the argument sketched in the previous page, we can check that 
\[
p^*(K_X+B)=K_Y+B_Y \text{ and } q^*(K_T+B_T)=K_Y+B_Y.
\]
Putting Proposition~\ref{prop:nodal-curve}
and Proposition~\ref{prop:Looijenga-pairs-are-cluster-type} together, we conclude the following theorem.

\begin{theorem}\label{thm:ct-vs-Loo}
Let $X$ be a smooth projective surface.
A pair $(X,B)$ is of cluster type if and only if $(X,B)$ is a Looijenga pair. 
\end{theorem}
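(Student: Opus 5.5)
The plan is to prove the two implications separately, each as an application of one of the two propositions above.

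\emph{Cluster type $\Rightarrow$ Looijenga.} Suppose $(X,B)$ is a cluster type pair with $X$ a smooth projective surface. Proposition~\ref{prop:nodal-curve} already gives that $B\in|-K_X|$ and that $B$ is nodal. It remains to see that $X$ is rational. This is immediate, because the embedding in codimension one $\mathbb{G}_m^2\dashrightarrow X$ is birational, so $X$ is birational to $\mathbb{G}_m^2$, hence to $\pp^2$. Therefore $(X,B)$ is a Looijenga pair.

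The one point I would double-check here is that the argument preceding Proposition~\ref{prop:nodal-curve} really produces $B$ as an honest member of $|-K_X|$. Since $\Omega_2$ has no zeros and $X$ is smooth, the divisor of $j_*\Omega_2$ is $-B$, so $K_X\sim -B$. It should also produce $B$ as a reduced nodal curve, either a cycle of smooth rational curves or an irreducible nodal rational curve. This is what the analysis via the common resolution $Y\to X$, $Y\to\pp^2$ shows: $B_Y+E$ is a cycle of rational curves, and contracting curves with $K_Y\cdot C=-1$ preserves this description.

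\emph{Looijenga $\Rightarrow$ cluster type.} Given a Looijenga pair $(X,B)$, apply Proposition~\ref{prop:Looijenga-pairs-are-cluster-type} to obtain a toric blow-up $p\colon(Y,B_Y)\to(X,B)$ and a strong toric model $q\colon(Y,B_Y)\to(T,B_T)$. Toric blow-ups only blow up nodes of the boundary, so $p$ restricts to an isomorphism $Y\setminus B_Y\simeq X\setminus B$. The morphism $q$ contracts only curves not contained in $B_Y$ (since $B_Y\to B_T$ is an isomorphism), and these are $(-1)$-curves meeting the boundary. Hence $q^{-1}(T\setminus B_T)$ is $\mathbb{G}_m^2$ blown up at finitely many points. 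Removing the exceptional curves yields an open embedding $\mathbb{G}_m^2\setminus\{p_1,\dots,p_r\}\hookrightarrow Y\setminus B_Y\simeq X\setminus B$, which is an embedding in codimension one $\mathbb{G}_m^2\dashrightarrow X$.

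Finally, I would check the volume form. On $T$, $\Omega_2$ has divisor $-B_T$, so $K_T+B_T\sim0$ with the form as a generator. Next I would verify the crepant equalities $q^*(K_T+B_T)=K_Y+B_Y$ and $p^*(K_X+B)=K_Y+B_Y$. For $p$, blowing up a node of a reduced boundary is log crepant. For $q$, each point blown up lies on a smooth point of the boundary, so the exceptional curve has discrepancy $1-1=0$ in the pair and is not in $B_Y$. These equalities show that the divisor of $\Omega_2$ on $Y$ is exactly $-B_Y$, and pushing forward by $p$ gives divisor $-B$ on $X$. Thus $\Omega_2$ has no zeros on $X$ and its poles are $B$, so $(X,B)$ is a cluster type pair.

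The main obstacle is this last bookkeeping: identifying exactly which divisors $q$ contracts and confirming that they carry coefficient $0$ in the divisor of $\Omega_2$. That is where I would spend the care.
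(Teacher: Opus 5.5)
Your proposal is correct and follows the paper's route. The paper also obtains the theorem by combining Proposition~\ref{prop:nodal-curve} (with rationality of $X$ coming from the birational map $\mathbb{G}_m^2\dashrightarrow X$) and Proposition~\ref{prop:Looijenga-pairs-are-cluster-type}, together with the crepant identities $p^*(K_X+B)=K_Y+B_Y=q^*(K_T+B_T)$.

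One slip needs fixing. Every center blown up by $q$ lies on the strict transform of $B_T$, as you yourself use in the discrepancy check. Hence $q$ is an isomorphism over $T\setminus B_T$, and $q^{-1}(T\setminus B_T)\simeq\mathbb{G}_m^2$ is not a blow-up of $\mathbb{G}_m^2$. You therefore get an honest open embedding $\mathbb{G}_m^2\hookrightarrow Y\setminus B_Y\simeq X\setminus B$. The $q$-exceptional curves lie in the complement of this torus and carry coefficient $0$ in the divisor of $\Omega_2$. Note that these curves need not all be $(-1)$-curves meeting $B_Y$ once infinitely near points are blown up.
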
 

In other words, a smooth projective surface $X$ is of cluster type if and only if the linear system $|-K_X|$ admits a nodal curve. Thus, deciding whether a smooth projective surface is of cluster type or not is a rather simple problem. 
As a consequence, one can completely understand whether smooth Fano surfaces are of cluster type or not. 
Recall that smooth Fano surfaces are also known as {\em del Pezzo} surfaces and their degree is defined to be the self-intersection of the anti-canonical divisor. 
The following theorem was first observed in~\cite[Theorem 2.1 and Remark 2.2]{ALP23}.

\begin{theorem} 
Let $X$ be a smooth del Pezzo surface. Then, the following statements hold:
\begin{enumerate} 
\item If $X$ has degree at least two, then it is cluster type. 
\item If $X$ is a general surface of degree one, then it is cluster type.
\end{enumerate}
\end{theorem}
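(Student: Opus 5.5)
By Theorem~\ref{thm:ct-vs-Loo}, it suffices to show that $|-K_X|$ contains a nodal curve, meaning a reduced curve with at least one node which, by Proposition~\ref{prop:nodal-curve}, is either an irreducible nodal rational curve or a cycle of smooth rational curves. The plan is to produce such a curve by hand, splitting by degree $d=K_X^2$.

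\textbf{Degree $d\geq 3$.} Here $-K_X$ is very ample and embeds $X$ as a surface of degree $d$ in $\pp^d$ (for $d=9$ take $\pp^2$ itself, for $d=8$ also $\pp^1\times\pp^1$). If $X$ is toric (the five surfaces listed in Section~\ref{sec:toric-geometry}), the toric boundary is already a cycle in $|-K_X|$. Otherwise $X$ is a blow-up of $\pp^2$ at $r=9-d\le 6$ general points. I would then pick a cubic through the points that degenerates suitably: for instance the union of a line through two of the points and a conic through the remaining ones, or, when $r\le 5$, a triangle of lines. Its strict transform, plus possibly some exceptional curves, gives an anticanonical cycle of rational curves. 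Concretely, I would take a general point $p$ and a cubic through the $r$ points with a node at $p$. Such a cubic exists because the conditions number at most $6+3=9<10$, and a dimension count shows the general member is irreducible with a node at $p$. Its strict transform is then an irreducible nodal curve in $|-K_X|$.

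\textbf{Degree $2$ and $1$.} The same uniform argument handles $d=2$. We have $r=7$ points, and imposing a node at $p\notin\{p_i\}$ gives $7+3=10$ conditions on the $10$-dimensional space of cubics, so there is at least one such cubic. Equivalently, $|-K_X|$ is a net mapping $X$ two-to-one onto $\pp^2$, branched along a smooth quartic $Q$, and anticanonical curves are preimages of lines. The preimage of a line $L$ tangent to $Q$ at a simple tangency point is a curve with a node over the tangency point. Since $Q$ has finitely many bitangents and flexes, a general tangent line is simply tangent at exactly one point, and its preimage is an irreducible rational curve with one node (arithmetic genus $1$, so geometric genus $0$). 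For $d=1$, $|-K_X|$ is a pencil with one base point, giving an elliptic fibration of $\mathrm{Bl}_{\mathrm{pt}}X$. This fibration has $12$ singular fibers counted with Euler characteristic. For general $X$ (eight general points) all singular fibers are nodal rational curves, i.e.\ of type $I_1$, so any of them is a nodal member of $|-K_X|$.

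\textbf{Main obstacle.} The step I expect to be hardest is guaranteeing that the chosen member is genuinely \emph{nodal}, and not cuspidal or non-reduced. For $d=2$ this requires that a general tangent line to $Q$ is an ordinary tangent; this holds because $Q$ has only finitely many flexes and bitangents. For $d=1$ it is exactly where generality is needed: special degree-one del Pezzos can have all singular anticanonical members cuspidal (twelve... fibres $II$ combining), and the statement only claims the general case. There I would argue that having a cuspidal fiber is a closed condition of positive codimension in the moduli of eight points. Exhibiting one example with only $I_1$ fibres, for instance a general Weierstrass model, would then finish the proof by openness.
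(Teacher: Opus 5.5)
Your route is the paper's: Theorem~\ref{thm:ct-vs-Loo} reduces everything to exhibiting a nodal member of $|-K_X|$. The paper takes that existence from \cite{ALP23}, and your explicit members supply it correctly:
\begin{itemize}
\item for $d\ge 3$, a line through two of the points plus a conic through the rest, or a triangle of lines when $r\le 5$;
\item for $d=2$, the preimage of a general tangent line to the branch quartic;
\item for general $d=1$, an $I_1$ fibre, together with the openness argument.
\end{itemize}

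One step is false, however. It should be deleted, not presented as ``equivalent'' to the double-cover argument. In degree $2$, imposing $7+3=10$ homogeneous linear conditions on the $10$-dimensional space of cubics does not guarantee a nonzero solution, and for a general point $p$ there is none. Let $\phi\colon X\to\mathbb{P}^2$ be the anticanonical double cover. Every member of $|-K_X|$ is $\phi^{-1}(L)$ for a line $L$. Since $\phi$ is \'etale off the ramification curve, such a member can only be singular along that curve. So the points carrying a singular anticanonical member form a curve, not an open set, and the ``uniform argument'' does not extend from $d\ge 3$ to $d=2$.

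The double-cover argument stands on its own. You should add that $\phi^{-1}(L)$ is irreducible: its normalization is a double cover of $L$ branched at the two transverse points of $L\cap Q$.

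Two smaller remarks:
\begin{itemize}
\item For $d\ge 3$, the ``dimension count'' giving an irreducible cubic with a node at a general $p$ is only sketched. The reducible cycles you list first already give a complete argument, valid for every configuration of points in general position. You are right to exclude the triangle when $r=6$, since three such lines may be concurrent at an Eckardt point.
\item The special degree-one surfaces with no nodal member have six cuspidal fibres of type $II$ (Euler number $2$ each), not twelve. For part~(2), your closedness-plus-one-example argument handles exactly this.
\end{itemize}
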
 

There are examples of smooth del Pezzo surfaces $X$ of degree one where all the elements of $|-K_X|$ are either smooth or have cusp singularities. Therefore, these del Pezzo surfaces of degree one are not cluster type. The understanding of cluster type {\em structures} given by embeddings in codimension one $\mathbb{G}_m^n\dashrightarrow X$ is then equivalent to the study of boundary divisors $B\in |-K_X|$ for which 
there is an embedding in codimension one $\mathbb{G}_m^n\dashrightarrow X\setminus B$. 
One can now easily list all the possible cluster type boundaries in the projective plane $\mathbb{P}^2$: 
\begin{itemize}
\item three transversal lines;
\item a quadric and a transversal line; and 
\item a nodal cubic.
\end{itemize} 
Note that there is moduli on the cluster type boundaries in $\pp^2$. Another interesting fact is that the number of embeddings in codimension one $\mathbb{G}_m^n\dashrightarrow X$ to a cluster type variety tends to be infinite, even if we fix some cluster type boundary $B\subset X$. 
In~\cite[Theorem 1.10]{EFM24}, the authors prove that a cluster type pair $(X,B)$
admits a unique embedding in codimension one $\mathbb{G}_m^n \dashrightarrow X\setminus B$ if and only if $(X,B)$ is a toric pair. 
In the case of the cluster type pair $(\pp^2,Q+L)$, where $Q$ is a quadric and $L$ is a transversal line, we can find precisely two tori. 
We have two embeddings 
\[
j_1\colon \mathbb{G}_m^2 \hookrightarrow \pp^2 \setminus {(Q+L)}
\quad 
\text{ and }
\quad 
j_2\colon \mathbb{G}_m^2 \hookrightarrow \pp^2 \setminus {(Q+L)}.
\]
These embeddings can be explained as follows. 
Let $p_1$ and $p_2$ be the two points of intersection of $Q$ and $L$. Let $L_1$ and $L_2$ be the two lines in $\pp^2$ which are tangent to $Q$ at $p_1$ and $p_2$, respectively.
Then, the image of $j_1$ is $\pp^2\setminus (Q+L+L_1)$
and the image of $j_2$ is $\pp^2\setminus (Q+L+L_2)$.
Indeed, after some toric blow-ups of $(\pp^2,Q+L)$ the strict transforms of the lines $L_1$ and $L_2$ become $(-1)$-curves that can be contracted to some strong toric models. These toric models are in correspondence with tori in the complement of the cluster type boundary. The case $(\pp^2,C_3)$ where $C_3$ is a nodal cubic is much more interesting. Indeed, there are infinitely many embeddings $\mathbb{G}_m^2 \hookrightarrow \pp^2\setminus C_3$.

Understanding cluster type surfaces gets 
much more difficult in the singular case. 
In~\cite[Proposition 5.1]{EFM24}, Enwright, Figueroa, and the author prove that a cluster type surface only has $A_n$ singularities outside the cluster type boundary, i.e., singularities locally isomorphic to $\{(x,y,z)\mid x^2+y^2+z^{n+1}=0\}$. The analysis of the cluster type boundary $B\in |-K_X|$ above is still valid in the orbifold case, i.e., if $X$ is an orbifold and $(X,B)$ is a cluster type pair, then $B$ is a nodal curve. However, Example~\ref{ex:ct-sing-surf} below shows that Theorem~\ref{thm:ct-vs-Loo} does not hold for singular surfaces. For Example~\ref{ex:ct-sing-surf}, we will use the following proposition regarding fundamental groups of cluster type pairs.

\begin{proposition}\label{prop:fundamental-ct}
Let $(X,B)$ be a cluster type pair. 
Then, the fundamental group $\pi_1(X^{\rm sm}\setminus B)$ is abelian of rank at most $\dim X$.
\end{proposition}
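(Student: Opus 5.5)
The plan is to compare $\pi_1(X^{\rm sm}\setminus B)$ with the fundamental group of the torus through the embedding in codimension one. Fix an embedding in codimension one $j\colon \mathbb{G}_m^n\dashrightarrow X$ witnessing that $(X,B)$ is cluster type. There is a closed subset $Z\subset \mathbb{G}_m^n$ of codimension at least two such that $j$ restricts to an open embedding $V:=\mathbb{G}_m^n\setminus Z\hookrightarrow X$. Since $B$ is the pole divisor of $\Omega_n$ and $\Omega_n$ has no poles on $\mathbb{G}_m^n$, the image $j(V)$ is disjoint from $B$. Discarding the singular locus of $X$, which pulls back to a closed subset of $V$, we may enlarge $Z$ and assume $j(V)\subset U:=X^{\rm sm}\setminus B$.

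The issue is that the singular locus may meet the torus in codimension one? No. $X$ is normal, so its singular locus has codimension at least two in $X$, and its preimage in $V$ has codimension at least two as well. Hence $V$ is still the complement of a codimension-two subset of $\mathbb{G}_m^n$. Removing a closed subset of real codimension at least four from a smooth connected complex manifold does not change the fundamental group. Therefore
\[
\pi_1(V)\simeq \pi_1(\mathbb{G}_m^n)\simeq \mathbb{Z}^n.
\]

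Next, I would use the standard fact that if $W\subset U$ is a nonempty Zariski open subset of a smooth connected variety, then $\pi_1(W)\to \pi_1(U)$ is surjective. This holds because the complement has real codimension at least two, so loops and homotopies can be pushed off it. Applying this to $W=j(V)\subset U$ shows that $\pi_1(U)$ is a quotient of $\mathbb{Z}^n$. A quotient of $\mathbb{Z}^n$ is abelian and generated by $n$ elements, hence abelian of rank at most $n=\dim X$.

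The main point requiring care is the bookkeeping in the first two paragraphs. We must check that $j(V)$ genuinely avoids $B$ and $X^{\rm sm}$-complement after shrinking by only a codimension-two set. We must also check that the complement $U\setminus j(V)$ is a proper closed subset, so that the surjectivity argument applies; this is automatic because $j$ is birational and $U$ is irreducible. Everything else is a formal consequence of these facts. Notably, we do not need Theorem~\ref{thm:structural-thm-ct}, since a single torus chart suffices.
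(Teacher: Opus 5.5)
Your proof is correct and is essentially the paper's argument: removing the codimension-two set $Z$ leaves $\pi_1(\mathbb{G}_m^n\setminus Z)\simeq\mathbb{Z}^n$, and the surjectivity of $\pi_1$ of a nonempty Zariski open subset onto $\pi_1$ of the ambient variety (the paper cites \cite[Theorem 12.1.5]{CLS11}) gives a surjection $\mathbb{Z}^n\rightarrow\pi_1(X^{\rm sm}\setminus B)$. Your extra bookkeeping is fine. Two small remarks: $j(V)\subset X^{\rm sm}$ holds automatically because $j(V)\simeq V$ is smooth, so nothing needs to be discarded; and surjectivity only requires pushing loops, not homotopies, off the real-codimension-two complement.
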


\begin{proof}
We have an embedding in codimension one $\mathbb{G}_m^n\dashrightarrow X^{\rm sm}\setminus B$. 
Thus, there exists a closed subvariety $Z\subset \mathbb{G}_m^n$ of codimension at least two for which 
there is an embedding $\mathbb{G}_m^n\setminus Z \hookrightarrow X^{\rm sm}\setminus B$. 
Note that $\pi_1(\mathbb{G}_m^n\setminus Z)\simeq 
\pi_1(\mathbb{G}_m^n)\simeq \zz^{\dim X}$.
The previous isomorphism holds as $\pi_1(\cc^n\setminus \{0\})\simeq \{1\}$ for $n\geq 2$.
Therefore, by~\cite[Theorem 12.1.5]{CLS11}, we have a surjective homomorphism 
$\zz^{\rm dim X}\rightarrow \pi_1(X^{\rm sm}\setminus B)$.
\end{proof} 

\begin{example}\label{ex:ct-sing-surf}
{\em 
We show an example of a pair $(X,B)$ where $X$ is a surface with only $A_1$ singularities, $B$ is an anti-canonical nodal curve, and the pair $(X,B)$ is not cluster type.
Consider the pair $(\pp^1\times \pp^1,Q+F)$ where $Q$ is a general divisor in the system $|\mathcal{O}_{\pp^1\times \pp^1}(2,1)|$ and $F$ is a general fiber of the projection onto the second component. 
Let $\pi\colon \pp^1\times \pp^1 \rightarrow \pp^1$ be such projection. 
Let $p_0$ and $p_1$ be the two points of $\pp^1$ over which $Q$ ramifies and let $p_\infty$ be the image of $F$ in $\pp^1$.
Let $p_1\colon X_1\rightarrow \pp^1\times \pp^1$ be the blow-up of the points $(F_{p_0}+F_{p_1})\cap Q$. 
One can check $p_1^*(K_{\pp^1\times \pp^1}+Q+F)=K_{X_1}+Q_1+F_1$, 
where $Q_1$ and $F_1$ are the strict transforms of $Q$ and $F$, respectively. 
Let $E_1$ and $E_2$ be the two exceptional divisors of $p_1$.
Let $p_2\colon X_2\rightarrow X_1$ be the blow-up of 
$E_1\cap Q_1$ and $E_2\cap Q_2$. 
We can check $p_2^*(K_{X_1}+Q_1+F_1)=K_{X_2}+Q_2+F_2$ where 
$Q_2$ and $F_2$ are the strict transforms of $Q_1$ and $F_1$, respectively.
Note that $X_2$ has four $(-2)$-curves; the strict transforms of $F_{p_0},F_{p_1},E_1$, and $E_2$.
Let $q\colon X_2\rightarrow X$ be the contraction of these four $(-2)$-curves. 
Let $Q_X$ and $F_X$ be the push-forward of $Q_2$ and $F_2$ to $Y$, respectively. Set $B:=Q_X+F_X$.
Note that $(X,B)$ is a pair where $X$ is a surface with only $A_1$ singularities and $B$ is an anti-canonical nodal curve.

We argue that $(X,B)$ is not a cluster type pair. 
Note that we have a fibration $f\colon X\rightarrow \pp^1$ induced by $\pi$. 
By~\cite[Lemma 2.18]{GLM23}, we have a surjective homomorphism of fundamental groups 
\[
\pi_1(X^{\rm sm}\setminus B) \rightarrow \pi_1\left(\pp^1,\frac{1}{2}(p_0+p_1)+p_\infty \right)
\]
where the group on the right is the orbifold fundamental group of the pair (see, e.g.,~\cite[Definition 2.3]{GLM23}). 
The fundamental group $\pi_1(\pp^1,\frac{1}{2}(p_0+p_1)+p_\infty)$ is isomorphic to the infinite dihedral group $\zz_2\ast \zz_2$, so it is not abelian.
By Proposition~\ref{prop:fundamental-ct}, we conclude that the pair $(X,B)$ is not cluster type. 
}
\end{example}

Although the setting of singular surfaces is more difficult, there are some partial results towards the understanding of cluster type surfaces. 
Gorenstein del Pezzo surfaces of Picard rank one were classified by Miyanishi and Zhang~\cite{MZ88}. 
These surfaces are precisely the Fano surfaces with $A,D,E$-singularities and Picard rank one. 
These form a natural class of surfaces to understand from the cluster type perspective.
In~\cite{MY24}, Y\'a\~nez and the author proved the following theorem by studying the anti-canonical linear systems of such surfaces.

\begin{theorem}~\label{thm:ct-sing-surf}
Let $X$ be a Gorenstein del Pezzo surface of Picard rank one. 
The surface $X$ is of cluster type if and only if the following two conditions hold:
\begin{enumerate}
\item[(i)] $X$ only has $A_n$ singularities; and 
\item[(ii)] either $\vol(X)\geq 2$ or $|X^{\rm sing}|<4$.
\end{enumerate} 
\end{theorem}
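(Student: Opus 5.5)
The proof rests on the Miyanishi--Zhang classification of Gorenstein del Pezzo surfaces of Picard rank one, i.e.\ the list of possible ADE configurations for each degree $d=\vol(X)=K_X^2\in\{1,\dots,9\}$. On top of that list I would prove the two directions separately.

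For necessity, I would first use~\cite[Proposition 5.1]{EFM24}: a cluster type surface has only $A_n$ singularities away from the boundary. So it remains to exclude $D$ and $E$ points lying on $B$. Near a point of $B$ the pair $(X,B)$ is lc with $K_X+B\sim 0$ and $B$ Cartier. If $X$ has a $D_n$ or $E_n$ point $x\in B$, then inversion of adjunction forces $B$ to be a curve through the point, and the minimal resolution of $(X,B)$ near $x$ must have dual graph a chain (or cycle) coming from the toric blow-up description. I would show this is incompatible: the crepant pullback of $(X,B)$ to the minimal resolution has a boundary that is a cycle of rational curves containing the exceptional curves it meets with coefficient one. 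Since $X$ is Gorenstein, the exceptional curves have coefficient $0$ or $1$, and those with coefficient one form a chain. A branching vertex of the $D/E$ graph would then have to meet the boundary cycle in a way the negativity lemma forbids. This yields (i).

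For (ii), suppose $\vol(X)=1$ and $|X^{\rm sing}|\geq 4$. By the classification, these are the degree one cases with configurations such as $4A_2$ or $2A_1+2A_3$. Here I would use Proposition~\ref{prop:fundamental-ct}, arguing as in Example~\ref{ex:ct-sing-surf}. For every nodal $B\in|-K_X|$, I would exhibit a non-abelian quotient of $\pi_1(X^{\rm sm}\setminus B)$. Concretely, $|-K_X|$ is a pencil with one base point, and blowing it up gives an elliptic fibration. Because there are many singular points, the singular fibers are forced to carry multiplicities, so removing the nodal fiber $B$ leaves an orbifold base $\pp^1$ with at least two orbifold points, and its orbifold fundamental group is non-abelian. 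Alternatively, the local fundamental groups of four singular points already yield a non-abelian quotient. I would verify this case by case.

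For sufficiency, I would go through the classification list of $A_n$-only surfaces satisfying (ii). For each one I would write $X$ as the contraction of $(-2)$-curves on a weak del Pezzo surface $Y$ obtained from $\pp^2$ by blowing up points on a cluster type boundary, namely three lines, a conic plus a line, or a nodal cubic, including infinitely near points. The blow-ups must be chosen so that the $(-2)$-curves are disjoint from the strict transform $B_Y$ of the boundary. Then $B_Y$ is anticanonical and nodal, $(Y,B_Y)$ is a Looijenga pair, and hence it is cluster type by Theorem~\ref{thm:ct-vs-Loo}. Contracting curves disjoint from $B_Y$ preserves the torus and the volume form, so $X$ is cluster type.

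The main obstacle is this explicit case-by-case construction of blow-up configurations. It is hardest in degree one with three singular points, where the configuration of $(-2)$-curves must avoid a nodal anticanonical curve. I would first try constructions via the $E_8$ lattice embedding of the $A_n$ root systems.
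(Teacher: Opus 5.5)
Your argument for (i) works once it is cleaned up. Since $X$ is Gorenstein and $K_X+B\sim 0$, the divisor $B$ is Cartier. So over a point $x\in B$ every exceptional curve of the minimal resolution has coefficient exactly $1$ (not ``$0$ or $1$''). The condition $\pi^*B\cdot E=0$ then allows each $(-2)$-curve at most two neighbours, which excludes the $D$ and $E$ graphs.

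The first genuine gap is in the necessity of (ii): the multiple-fiber mechanism does not exist. On a degree-one Gorenstein del Pezzo surface, every member of $|-K_X|$ is the image of the fiber component of the rational elliptic surface that meets the zero section. That component has multiplicity one, and all other fiber components are exactly the contracted $(-2)$-curves. Hence $X^{\rm sm}\setminus B\to\mathbb{A}^1$ has only reduced fibers (for $4A_2$ and $2A_1+2A_3$ the reducible fibers are $4I_3$, resp.\ $2I_4+2I_2$). The orbifold base therefore has no orbifold points, and you get no non-abelian quotient.

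The ``local fundamental groups'' alternative is not an argument either. Those groups are cyclic, and for instance the $4A_2$ surface is $\pp^2/(\zz/3\zz)^2$ with $\pi_1(X^{\rm sm})\cong(\zz/3\zz)^2$, which is abelian. Non-abelianity of $\pi_1(X^{\rm sm}\setminus B)$ comes from writing $X\setminus B=\mathbb{G}_m^2/(\zz/3\zz)^2$ with the group acting non-trivially on $\pi_1(\mathbb{G}_m^2)$, which is a different argument. A robust obstruction is a count, which runs in three steps. First, if $(X,B)$ is cluster type, the torus gives a toric model $Y'\to T$ on a toric blow-up $Y'$ of the minimal resolution $(Y,B_Y)$, blowing up only smooth boundary points. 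Second, the number of such blow-ups is the charge $12-K_X^2-n_0$, where $n_0$ is the number of components of $B_Y$. Third, each $A_m$ point off $B$ must be exceptional for this model, which costs a chain $[-2,\dots,-2,-1]$ of $m+1$ blow-ups. Since $\rho(X)=1$, the total rank is $9-K_X^2$, and this gives $\#\{\text{components of }B\}+\#\{\text{singular points off }B\}\le 3$. In degree one, $B$ is irreducible and meets at most one singular point, so $|X^{\rm sing}|\le 3$. (The survey only cites~\cite{MY24} here, so this is measured against what a complete proof must contain.)

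The second gap is in sufficiency. The claim that contracting $(-2)$-curves disjoint from $B_Y$ ``preserves the torus'' is false. The torus given by Theorem~\ref{thm:ct-vs-Loo} can meet such a curve: a complete curve is never contained in $\mathbb{G}_m^2$, but it can meet it in an affine curve. After contracting, the torus no longer embeds in codimension one. For instance, take the degree-one surface with an $E_8$ point coming from a rational elliptic surface with fibers $II^*+2I_1$. The image $B$ of an $I_1$ fiber avoids the singular point, and $(Y,B_Y)$ is a Looijenga pair with all $(-2)$-curves disjoint from $B_Y$. Yet $X$ is not cluster type, by (i). What you need is that every $A_m$ chain off $B$ be exceptional for a toric model, i.e.\ arise from $m+1$ infinitely near blow-ups along one boundary component.

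Moreover, insisting that all $(-2)$-curves avoid $B_Y$ cannot work in general. By the count above, a boundary through no singular point forces $|X^{\rm sing}|\le 2$. So for $A_5+A_2+A_1$ in degree one (the case you flag as hardest), and for the toric cubic $\{xyz=w^3\}$ with $3A_2$, every cluster type boundary must pass through singular points. Your case-by-case constructions would have to be redesigned accordingly.
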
 

We do not know much more about the cluster type condition for singular surfaces. It would be a good step forward to find criteria for a singular surface to be of cluster type.

\begin{problem}\label{prob:ct-sing}
Let $X$ be a Gorenstein Fano surface with only $A_n$ singularities. Find a criterion for $X$ to be of cluster type.
\end{problem}

The discussion of higher-dimensional cluster type varieties 
will benefit from understanding some notions from singularity theory and birational geometry.
The following two sections will focus on explaining notions about singularities of the MMP and birational geometry of pairs, respectively.

\section{Singularity theory}
\label{sec:sing-theory} 

In this section, we recall some concepts about singularities of pairs. 
First, we focus on giving the important definitions, and then we will explore several examples of interest.

\begin{definition} 
{\em 
A {\em pair} $(X,\Delta)$ is a couple consisting of a normal quasi-projective variety $X$ and an effective divisor $\Delta$ such that $K_X+\Delta$ is $\qq$-Cartier, i.e., $m(K_X+\Delta)$ is a Cartier divisor for a suitable integer $m$. 
}
\end{definition} 

If $\Delta=0$, then a pair $(X,0)$ is nothing but a $\qq$-Gorenstein variety, and in this case we suppress the boundary from the notation. 
In algebraic geometry, we are often interested in pairs rather than varieties. 
Pairs appear naturally in many settings: via the Riemann-Hurwitz formula when taking finite quotients, via adjunction, and to deal with non-$\qq$-Gorenstein varieties. In the latter case, the boundary $\Delta$ can be regarded as a correction term for the $\qq$-Gorenstein condition.

\begin{definition}
{\em 
Let $(X,\Delta)$ be a pair. 
Let $\pi\colon Y\rightarrow X$ be a projective birational morphism from a normal variety $Y$.
Let $E\subset Y$ be a prime divisor. 
We write 
\[
\pi^*(K_X+\Delta)=K_Y+\Delta_Y,
\]
where $K_Y$ is a canonical divisor on $Y$ for which 
$\pi_*K_Y=K_X$. 
Here, $\pi^*(K_X+\Delta)$ is the pull-back of $\qq$-Cartier divisors.
The {\em log discrepancy} of $(X,\Delta)$ at $E$ is defined to be 
\[
a_E(X,\Delta)=1-{\rm coeff}_E(\Delta_Y).
\]
The log discrepancy is a rational number. 
Such a prime divisor $E\subset Y$ is called a {\em prime divisor} over $X$, as it naturally induces a divisorial valuation on $X$. 
}
\end{definition}

Philosophically, the log discrepancy of $(X,\Delta)$ with respect to $E$ measures how singular is the pair $(X,\Delta)$ along the tangent directions on $X$ corresponding to points in $E$.
The larger the log discrepancies, the better we expect the singularities of $(X,\Delta)$ to be. 
This motivates the following definition.

\begin{definition}
{\em 
We say that a pair $(X,\Delta)$ is {\em log terminal} (resp. {\em canonical} and {\em log canonical})
if $a_E(X,\Delta)>0$ (resp. $a_E(X,\Delta)\geq 1$ and $a_E(X,\Delta)\geq 0$) for every prime divisor $E$ over $X$. 
The previous definition of log terminal is also known as {\em Kawamata log terminal} or {\em klt} in the literature. 
We say that a pair $(X,\Delta)$ is {\em terminal}
if $a_E(X,\Delta)> 1$ for every prime divisor $E$ which is exceptional over $X$. 
Let $\epsilon>0$ be a positive real number.
We say that a pair $(X,\Delta)$ is {\em $\epsilon$-log canonical} if $a_E(x,\Delta)\geq \epsilon$ for every prime divisor $E$ over $X$. 
We may abbreviate log canonical and $\epsilon$-log canonical by lc and $\epsilon$-lc, respectively.
}
\end{definition} 

\begin{definition}\label{def:klt-type}
{\em 
A variety $X$ is said to be {\em klt type} (resp. {\em lc type}) if there exists a boundary $\Delta$ such that the pair $(X,\Delta)$ is klt (resp. lc).
}
\end{definition}

\begin{definition}
{\em  
Let $(X,\Delta)$ be a pair.
A {\em non-klt place} of $(X,\Delta)$ is a prime divisor $E$ over $X$ for which $a_E(X,\Delta)\leq 0$. The image of a non-klt place $E$ on $X$ is called a {\em non-klt center} of the pair.
An {\em lc place} of $(X,\Delta)$ is a prime divisor
$E$ over $X$ for which $a_E(X,\Delta)=0$. 
The image of an lc place $E$ on $X$ is said to be an {\em lc center} of the pair.
}
\end{definition} 

\begin{definition}
{\em 
A pair $(X,\Delta)$ is said to be {\em divisorially log terminal} or {\em dlt} if $(X,\Delta)$ is lc and the pair $(X,\Delta)$ is simple normal crossing\footnote{this means that $X$ is smooth and the components of $\Delta$ are transversal.} near every log canonical center of $(X,\Delta)$.

Let $(X,\Delta)$ be a log canonical pair.
A {\em dlt modification} of $(X,\Delta)$ is a projective birational morphism $p\colon Y\rightarrow X$ satisfying the following conditions:
\begin{enumerate}
\item $p$ only extract log canonical places of $(X,\Delta)$; and 
\item the pair $(Y,\Delta_Y)$, defined by 
$\pi^*(K_X+\Delta)=K_Y+\Delta_Y$, is dlt.
\end{enumerate} 
In the previous setting, we may say that 
$(Y,\Delta_Y)\rightarrow (X,\Delta)$ is a dlt modification.
}
\end{definition} 

It is known that every log canonical pair admits a dlt modification~\cite[Theorem 3.1]{KK10}.
For a log canonical pair, the log canonical centers are its most singular locus.
Thus, a dlt modification is a transformation that makes the most singular locus snc. 

\begin{definition}
{\em 
Let $(Y,\Delta_Y)$ be a dlt pair
and write $\Delta_Y^{=1}=\sum_{i\in I} B_i$. 
The {\em dual complex} of $(Y,\Delta_Y)$,
denoted by $\mathcal{D}(Y,\Delta_Y)$, is the regular $\triangle$-complex whose vertices are in correspondence with the prime components $B_i,i\in I$ and its $k$-cells are correspond to strata of $\sum_{i\in I}B_i$ of codimension $k+1$ in $Y$, i.e., prime components of $B_{i_0}\cap\dots\cap B_{i_k}$.
}
\end{definition} 

\begin{definition}
{\em 
Let $(X,\Delta)$ be a log canonical pair.
The {\em dual complex} of $(X,\Delta)$
is defined to be $\mathcal{D}(Y,\Delta_Y)$ 
where $(Y,\Delta_Y)\rightarrow (X,\Delta)$ is a dlt modification. 
}
\end{definition}

A priori, the dual complex of a log canonical pair depends on a given dlt modification.
However, by~\cite[Proposition 11]{dFKX}, the dual complex of a log canonical pair is well-defined up to simple homotopy equivalence.

\begin{definition}
{\em 
Let $(X,\Delta)$ be a log canonical pair.
The {\em regularity} of $(X,\Delta)$ denoted by 
${\rm reg}(X,\Delta)$ is the maximum of the dimension of $\mathcal{D}(Y,\Delta_Y)$ where
$(Y,\Delta_Y)\rightarrow (X,\Delta)$ is a dlt modification. 
The {\em coregularity} of a pair $(X,\Delta)$ is defined to be 
\[
{\rm coreg}(X,\Delta):=\dim X - {\rm reg}(X,\Delta)-1.
\]
For a log canonical pair, the coregularity is the dimension of the smallest log canonical center in a dlt modification.
The {\em absolute coregularity} of a klt pair $(X,\Delta)$ is
\[
\hat{\rm coreg}(X,\Delta):= \min 
\{ {\rm coreg}(X,B) \mid B\geq \Delta \text{ and } 
(X,B) \text{ is log canonical} \}. 
\]
}
\end{definition}

All the previous definitions are also valid in the local setting, i.e., near a closed singular point.
Whenever we work in the local setting, we will write $(X;x)$ to mean a variety $X$ near a closed point $x\in X$. 

The coregularity of a Calabi--Yau pair is closely related to its  fundamental groups. The following theorem is a consequence of the work of Braun and Figueroa (see~\cite[Theorem 2]{BF24}). 
It is a complementary result to Proposition~\ref{prop:fundamental-ct}.

\begin{theorem}\label{thm:Braun+Figueroa}
Let $X$ be a cluster type variety with klt type singularities. 
Then, the fundamental group $\pi_1(X^{\rm sm})$ is finite. 
\end{theorem}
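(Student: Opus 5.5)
Our approach is to realise $X$ as a log Calabi--Yau pair of coregularity zero with klt type singularities and then invoke~\cite[Theorem 2]{BF24}. That theorem says, roughly, that if $X$ is of klt type and admits a log canonical boundary $B$ with $K_X+B\sim_{\qq}0$ and ${\rm coreg}(X,B)=0$, then $\pi_1(X^{\rm sm})$ is finite (in fact of bounded order in terms of $\dim X$). Throughout, $B$ denotes the cluster type boundary of $X$ and $j\colon \mathbb{G}_m^n\dashrightarrow X$ the embedding in codimension one.

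\textbf{Step 1: $(X,B)$ is log Calabi--Yau.} The form $j_*\Omega_n$ has no zeros, and its poles are exactly $B$. Hence $K_X+B\sim 0$ as Weil divisors, and in particular $K_X+B$ is $\qq$-Cartier.

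\textbf{Step 2: $(X,B)$ is log canonical, with a toric lc place.} Choose a common resolution $p\colon Y\to X$ and $q\colon Y\to T$, where $T$ is a projective toric compactification of $\mathbb{G}_m^n$ with reduced toric boundary $B_T$. Write $p^*(K_X+B)=K_Y+B_Y$. Since both $j_*\Omega_n$ and $\Omega_n$ restrict to the same volume form on the torus, we get
\[
K_Y+B_Y=q^*(K_T+B_T)
\]
on the nose. This is the same negativity-lemma argument sketched for surfaces above, now run in arbitrary dimension. It follows that $(X,B)$ is crepant birational to the toric pair $(T,B_T)$, which is lc. Log discrepancies are invariant under crepant birational maps, so $(X,B)$ is lc. Moreover, the torus fixed point of any $n$-dimensional cone of $T$ gives a zero-dimensional lc center, so the dual complex has dimension $n-1$ and ${\rm coreg}(X,B)=0$. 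Put differently, coregularity is a crepant birational invariant, and $(T,B_T)$ has coregularity zero.

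\textbf{Step 3: apply Braun--Figueroa.} We now have that $X$ is of klt type, $(X,B)$ is lc, $K_X+B\sim 0$, and ${\rm coreg}(X,B)=0$. These are exactly the hypotheses of~\cite[Theorem 2]{BF24}, which then gives finiteness of $\pi_1(X^{\rm sm})$.

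The main obstacle is Step 2 in higher dimensions. One must verify carefully that the equality $K_Y+B_Y=q^*(K_T+B_T)$ holds, with no residual exceptional terms of either sign. The plan is as follows:
\begin{enumerate}
\item Apply the negativity lemma on $Y$ over both $X$ and $T$, using that both sides are linearly equivalent to zero.
\item Use that their difference is supported off the torus.
\item Use that it has no components along non-exceptional divisors on either side, because $\Omega_n$ has simple poles along $B_T$ and, by hypothesis, no zeros on $X$.
\end{enumerate}

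A secondary point is to check the precise form of the hypotheses in~\cite{BF24}. If that theorem requires $X$ to be klt itself rather than merely of klt type, we would first pass to a small $\qq$-factorialization, or perturb with a boundary $\Delta$ making $(X,\Delta)$ klt. This does not change $X^{\rm sm}$ up to a codimension-two subset, so $\pi_1$ is unaffected.
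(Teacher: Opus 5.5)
Your proposal is correct and follows the paper's route: the paper gives no separate proof and deduces the statement from \cite[Theorem 2]{BF24}, using that a cluster type pair is crepant birational to a toric pair and is therefore a Calabi--Yau pair of index one and coregularity zero, as in your Steps 1--3. The obstacle you flag in Step 2 is in fact tautological: if $K_X$, $K_Y$, $K_T$ are all taken to be the divisor of the single rational form $\Omega_n$, then $K_X+B$, $K_T+B_T$ and $K_Y+B_Y$ with $B_Y:=-\operatorname{div}_Y(\Omega_n)$ are all the zero divisor, so both crepant pullbacks agree without any negativity lemma (also note that $X$ must be taken projective, since $\mathbb{G}_m^n$ itself would otherwise be a counterexample).
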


\begin{example}
{\em 
In the case of dimension two, the previous classes of singularities have been completely classified (see, e.g.,~\cite[Section 3]{Kol92}). 
A surface $X$ is terminal if and only if it is smooth. 
A surface $X$ is canonical if and only if it has $A,D,E$-singularities, i.e., it is locally the quotient of $\cc^2$ by a finite subgroup $G\leqslant {\rm SL}(2,\cc)$. 
A surface $X$ is log terminal if and only if it has finite quotient singularities.
From the perspective of the absolute coregularity, for surfaces, there are only two possible values: zero and one.
Log terminal surface singularities of absolute coregularity zero are either toric or involution quotient of toric singularities~\cite[Theorem 10]{FFMP22}.
Log terminal singularities of absolute coregularity one are called {\em exceptional}, and the graphs of their minimal resolutions correspond to the Dynkin diagrams $E_6$, $E_7$, and $E_8$.
Finally, an elliptic singularity is an example of a log canonical surface singularity that is not log terminal.
}
\end{example}

\begin{example}
{\em 
In the case of dimension three, the class of terminal threefold singularities has been classified by the work of Reid and Mori (see, e.g.,~\cite{Rei87}). 
More precisely, given a  terminal threefold singularity 
$(X;x)$, we can find a finite cover $(Y;y)\rightarrow (X;x)$ only ramifying at $x\in X$ such that $(Y;y)$ is a {\em compound Du Val singularity}, i.e., a hypersurface singularity whose general hyperplane sections are Du Val.
The order of the finite index $Y\rightarrow X$ equals the {\em Cartier} index of $K_X$, i.e., the smallest $r$ such that $rK_X$ is Cartier near the singular point. 
It is expected that many canonical threefold singularities are complete intersections in toric ambient spaces (see~\cite[Conjecture]{Rei80}).
There is a handful of instances of canonical threefold singularities that have been classified: the toric case, the case of complexity one~\cite{BH20}, and some hypersurfaces~\cite{IP01}.
However, we still do not have a good understanding of canonical threefold singularities.
}
\end{example} 

In~\cite[Theorem 1]{Kol11}, Koll\'ar shows a sequence of terminal singularities, with no interesting quasi-\'etale finite covers, whose embedding dimension diverges. This is in contrast to terminal threefold singularities, which admit finite covers that are hypersurface singularities. This shows that the understanding of terminal fourfold singularities is already much more challenging than that of terminal threefolds.
Similar examples are given in~\cite[Theorem 2]{Kol11} for log canonical threefold singularities; the dual complex of such a surface can be homotopic to an arbitrary Riemann surface.

In the next two examples, we study two classes of singularities that are diametrically opposed:
exceptional singularities and toric singularities.

\begin{example}{\em 
A klt singularity of dimension $n$ is said to be {\em exceptional} if its absolute coregularity equals $\dim X-1$.
This is the maximum possible coregularity for a $n$-dimensional klt singularity.
In other words, a $n$-dimensional klt singularity $(X;x)$ is exceptional if for any boundary divisor $B$ through $x\in X$ for which $(X,B;x)$ is log canonical, the pair $(X,B)$ has a unique log canonical place. Among Du Val singularities these are precisely the $E_6,E_7$, and $E_8$ singularities. Higher-dimensional exceptional singularities are still hard to classify, however, we have a good qualitative understanding of these singularities. 
A consequence of~\cite[Theorem 1.1]{HLM20} is that for $\epsilon>0$ the class of $n$-dimensional $\epsilon$-lc exceptional singularities is bounded up to degenerations. 
This means that there is a finite type family $\mathcal{S}_{n,\epsilon}$ of singularities such that every
$n$-dimensional
$\epsilon$-lc exceptional singularity degenerates to a singularity in $\mathcal{S}_{n,\epsilon}$.
In particular, most of the invariants of singularities can be controlled for $n$-dimensional $\epsilon$-lc exceptional singularities. 
For instance, given any such a singularity $(X;x)$,
we have $|\pi_1(X;x)|\leq f(n,\epsilon)$ where 
$\pi_1(X;x)$ denotes the fundamental group of the singularity 
and $f(n,\epsilon)$ is a constant that only depends on the dimension and the mld. 
More generally, for a $n$-dimensional exceptional singularity $(X;x)$, we have a short exact sequence 
\begin{equation}\label{eq:ses-exceptional-fun}
1\rightarrow \zz/m\zz \rightarrow \pi_1(X;x) \rightarrow G \rightarrow 1, 
\end{equation} 
where $G$ is a finite group of order at most $f(n)$ (see, e.g.,~\cite[Theorem 5]{Mor21}). 
For a $n$-dimensional klt singularity $(X;x)$, we may find a projective birational morphism $\pi\colon Y\rightarrow X$, which is an isomorphism over $X\setminus \{x\}$, and $\pi^{-1}(x)=E$ is a Fano type variety (see Definition~\ref{def:ft}). 
If $(X;x)$ is an exceptional singularity, 
then the pair $(E,\Delta)$ obtained by adjunction of $(Y,E)$ to $E$ is an exceptional Fano pair. 
An exceptional Fano pair is a Fano pair whose absolute coregularity equals its dimension.
By the work of Birkar, these Fano pairs form a bounded family in each dimension (see~\cite[Theorem 1.3]{Bir19}). In the exact sequence~\eqref{eq:ses-exceptional-fun}, the subgroup $\zz/m\zz$ corresponds to a loop around $E$ while the quotient $G$ corresponds to the orbifold fundamental group of $(E,\Delta_E)$.

In summary, the combinatorial side of exceptional singularities is simple; we can find a partial resolution that extracts an exceptional Fano pair $(E,\Delta_E)$. However, the geometry of $(E,\Delta_E)$ itself can be complicated.  
}
\end{example} 

\begin{example}\label{ex:toric-mld}
{\em 
A toric singularity $(X;x)$ is the germ, near a closed torus invariant point, of a toric variety. 
If $(X;x)$ is a toric singularity and $B$ its reduced torus invariant boundary, then the pair $(X,B)$ is log canonical (see~\cite[Proposition 11.4.24]{CLS11}). 
Log canonical places of $(X,B)$ are in correspondence with torus invariant divisorial valuations of over $(X;x)$.
This pair structure shows that a toric singularity has absolute coregularity zero.
A dlt modification $(Y,B_Y)\rightarrow (X,B)$ is nothing but a resolution of singularities of the pair.
The dual complex $\mathcal{D}(X,B)$ of a $n$-dimensional toric singularity with its toric boundary is a simplicial structure on a $(n-1)$-dimensional disk.
The fundamental group of a $n$-dimensional toric singularity is a finite abelian group of rank at most $n-1$ (see, e.g.,~\cite[Proposition 12.1.9]{CLS11}). 

We explain how to compute the log discrepancies of a $\qq$-Gorenstein toric singularity. 
A $n$-dimensional toric singularity $(X;x)$ corresponds to a pointed polyhedral cone $\sigma \subset \qq^n$. 
Any integral point of $\sigma$ corresponds to a toric valuation over $(X;x)$. The points in ${\rm relint}(\sigma)\cap \zz^n$ correspond to exceptional torus invariant prime divisors with center in $x\in X$.
Since we are assuming that $(X;x)$ is $\qq$-Gorenstein, there is a linear function $L$ that takes the value one on each primitive generator of extremal rays of $\sigma$ (see~\cite[Section 1]{Amb06}).
The log discrepancy of $(X;x)$ at the toric valuation corresponding to $v\in {\rm relint}(\sigma)\cap \zz^n$ is nothing but the value of $L$ at $v$.
One can often find a resolution of singularities of $(X;x)$ by only extracting divisors which correspond to generators of the monoid $\sigma \cap \zz^n$. 

Toric surface singularities can be classified via their log discrepancies (see, e.g.~\cite{Amb24}).
We review two classic examples of toric surface singularities and study their log discrepancy functions.
The $A_n$ singularity corresponds to the cone 
$\sigma_n:=\langle (0,1),(n+1,1)\rangle$ and the divisors extracted in its minimal resolution correspond to 
$(k,1)$ with $k\in \{1,\dots, n\}$. The log discrepancy function takes value one on all the lattice points of the form $(k,1)$. On the other hand, the cone over a rational curve of degree $n$, let's call it $C_n$, corresponds to the cone 
$\tau_n:=\langle (0,1),(2n,2n-1)\rangle $. It can be resolved by a single blow-up extracting the divisor corresponding to the lattice point $(1,1)$ at which the log discrepancy function has value $\frac{1}{2n}$. The cone over a rational curve of degree $n$ is precisely the only singularity in the weighted projective space $\pp(1,1,n)$. Therefore, we conclude that ${\rm mld}(\pp(1,1,n))=\frac{1}{2n}$.

In summary, the combinatorial side of toric singularities is rich, however, the geometry of the exceptional divisors needed to resolve such singularities tends to be simple,
similar to such of projective spaces.
}
\end{example}

Let $X$ be a klt type variety. 
Given a cluster type pair $(X,B)$ the singularities of $X$ along $X\setminus B$ are Gorenstein canonical singularities. 
However, the actual class of singularities seems to be much smaller. For instance, in the case of surfaces with orbifold singularities $X\setminus B$ has only $A_n$ singularities, rather than $D_n$ or $E_6,E_7$, and $E_8$ singularities. 
The singularities of cluster varieties is classified in the works of Benito, Faber, Mourtada, and Schober~\cite{BFMS23,BFMS24}. In such case, the singularities can be described rather explicitly and they are mostly compound Du Val singularities. The following is a natural problem.

\begin{problem}\label{prob:sing-dim-3-ct}
Classify the singularities of $X\setminus B$ where $(X,B)$ is a three-dimensional cluster type pair. 
\end{problem} 

\section{Birational geometry} 
\label{sec: bir-geom}

In this section, we discuss some basic notions regarding birational geometry. We mostly discuss birational geometry of Fano varieties and Calabi--Yau pairs. We start introducing some notions related to the global curvature of projective varieties.

\begin{definition}\label{def:ft}
{\em 
A normal projective variety $X$ is said to be {\em Fano} if $-mK_X$ is a very ample divisor for some integer $m$ and $X$ has klt singularities.\footnote{Some authors call these varieties {\em klt Fano} varieties. However, all the Fano varieties we consider will be klt unless otherwise stated.}
A variety $X$ is said to be {\em Fano type} if there exists a boundary $\Delta$ in $X$ for which 
$(X,\Delta)$ has klt singularities and $-(K_X+\Delta)$ is big and nef.
}
\end{definition}

By~\cite[Corollary 1.3.1]{BCHM10}, we know that a Fano type variety is a Mori dream space. In particular, the MMP for any divisor on a Fano type variety terminates with a good minimal model or a Mori dream space. Fano varieties play a central role in the classification of algebraic varieties and, as observed above, also in the understanding of log terminal singularities.

\begin{definition}\label{def:cy-type}
{\em 
A pair $(X,\Delta)$ is said to be a {\em Calabi--Yau} 
if $K_X+\Delta\sim_\qq 0$ and $(X,\Delta)$ has log canonical singularities.
A pair $(X,\Delta)$ is said to be {\em Calabi--Yau type} if there exists a boundary $B\geq \Delta$ for which 
$(X,B)$ is Calabi--Yau.
}
\end{definition} 

Every Fano variety is Calabi--Yau type. Indeed, if $X$ is Fano and we choose $m$ such that $|-mK_X|$ is very ample, then for $\Gamma \in |-mK_X|$ general enough the pair
$(X,\Gamma/m)$ has log canonical singularities. On the other hand, it is often the case that Calabi--Yau varieties can be degenerated into reducible varieties whose components are Fano (or Fano type). An example of this phenomenon are elliptic curves in $\pp^2$ degenerating into the toric boundary which is a cycle of three rational curves. 

\begin{definition}
{\em 
Let $(X,\Delta)$ and $(Y,\Delta_Y)$ be two pairs. 
We say that these pairs are {\em crepant birational}
or {\em crepant birational equivalent} if the following conditions hold:
\begin{enumerate}
\item there is a birational map $\pi \colon Y \dashrightarrow X$;
\item a resolution of indeterminancy $p\colon Z \rightarrow Y$ and $q\colon Z\rightarrow X$ of $\pi$; and 
\item we have $q^*(K_X+\Delta)=p^*(K_Y+\Delta_Y)$. 
\end{enumerate}
In the previous setting, we may write 
$(X,\Delta) \simeq_{\rm cbir} (Y,\Delta_Y)$.
}
\end{definition} 

By definition, for every divisor $E$ over $X$, we have $a_E(X,\Delta)=a_E(Y,\Delta_Y)$. 
Thus, crepant birational pairs have the same log discrepancies. 
The word {\em crepant} does not really exist in any language. However, the prefix {\em dis} expresses negation, and crepant pairs have the same discrepancies. 
Crepant birational pairs also share some other properties as we see in the following lemma. 

\begin{proposition}\label{prop:crep-bir-prop}
Let $(X,\Delta)$ and $(Y,\Delta_Y)$ be two crepant birational pairs. Then, the following statements hold:
\begin{enumerate}
\item $(X,\Delta)$ and $(Y,\Delta_Y)$ have the same Cartier index;
\item $(X,\Delta)$ is Calabi--Yau if and only if 
$(Y,\Delta_Y)$ is Calabi--Yau;
\item $(X,\Delta)$ and $(Y,\Delta_Y)$ have the same coregularity; and 
\item $(X,\Delta)$ and $(Y,\Delta_Y)$ have the same dual complex up to simple homotopy equivalence. 
\end{enumerate} 
\end{proposition}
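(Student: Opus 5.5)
The plan is to work on a common resolution. Fix the data from the definition: a birational map $\pi\colon Y\dashrightarrow X$ and morphisms $p\colon Z\to Y$, $q\colon Z\to X$ with
\[
q^*(K_X+\Delta)=p^*(K_Y+\Delta_Y)=:K_Z+\Delta_Z .
\]
Every statement then follows by comparing each of $(X,\Delta)$ and $(Y,\Delta_Y)$ with the single sub-pair $(Z,\Delta_Z)$. The boundary $\Delta_Z$ need not be effective, so throughout I work with sub-pairs. The guiding principle is the observation above the proposition: $a_E(X,\Delta)=a_E(Y,\Delta_Y)$ for every prime divisor $E$ over $X$. Any invariant that depends only on the log discrepancy function, together with the linear-equivalence class of $K+\Delta$ pulled back to a common model, is therefore shared by the two pairs.

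For (1), I interpret the Cartier index as the smallest $m$ with $m(K+\Delta)$ Cartier. This is preserved under pullback by a birational morphism. The converse direction is the point needing care: I need that $q^*L$ Cartier forces $L$ Cartier when $L$ is already $\qq$-Cartier. I would argue via the projection formula and Zariski's main theorem. Since $q_*\mathcal{O}_Z=\mathcal{O}_X$, any local trivialization of $\mathcal{O}_Z(q^*L)$ descends to $\mathcal{O}_X(L)$ because the sheaf is reflexive on the normal variety $X$. If this turns out too delicate for the general index, I would settle for the statement about the index of the torsion class of $K+\Delta$, which is what is used later for Calabi--Yau pairs: $m(K_X+\Delta)\sim 0$ iff $m(K_Z+\Delta_Z)\sim 0$ iff $m(K_Y+\Delta_Y)\sim 0$. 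Pullback is injective on $\mathrm{Pic}$ and pushforward recovers the divisor.

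For (2), the condition $K+\Delta\sim_\qq0$ transfers through $Z$ by exactly this argument. Log canonicity is the condition $a_E\ge 0$ for all $E$, which depends only on log discrepancies. Effectivity of $\Delta_Y$ is part of the hypothesis that it is a pair.

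For (3) and (4), the main obstacle is comparing dlt modifications on different models. I would use the description of lc places: both pairs have the same set of lc places as divisorial valuations. I would then take a log resolution $W\to Z$ of $(Z,\Delta_Z)$ and run a relative MMP over $X$, respectively over $Y$, for $K_W+\Delta_W^{\ge0}$ with the non-lc-place components removed, as in the construction of dlt modifications \cite[Theorem 3.1]{KK10}. This produces dlt models of both pairs that are crepant to the same snc sub-pair on $W$. The key input is then the result of de Fernex--Koll\'ar--Xu \cite[Proposition 11]{dFKX}: the dual complex of the lc places of a crepant birational class is well defined up to simple homotopy equivalence, namely by that of $(W,\Delta_W^{=1})$, with the MMP steps inducing collapses. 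This gives (4). Statement (3) follows, since the coregularity is read off from the maximal dimension of the dual complex of a dlt modification, i.e. from the deepest stratum of lc centers. That equals the maximal number of lc places whose centers meet, a crepant birational invariant. Equivalently, simple homotopy equivalent dual complexes in this setting have the same dimension, by the pseudomanifold and collapse structure. I would cite the latter rather than reprove it.
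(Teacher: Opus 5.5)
Your route is essentially the paper's. The paper proves (1)--(2) by citing \cite[Lemma 3.1]{FMM22}, which is exactly your common-model argument, proves (4) by citing \cite[Proposition 11]{dFKX}, and deduces (3) from (4). Two of your steps, however, fail as written.

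In (1), the claim that ``$q^*L$ Cartier and $L$ $\qq$-Cartier force $L$ Cartier'' is false, not merely delicate. $\mathcal{O}_Z(q^*L)$ is invertible, but descending a trivialization through $q_*\mathcal{O}_Z=\mathcal{O}_X$ requires it to be trivial on $q^{-1}(U)$, and nothing ensures that. Here is a counterexample:
\begin{itemize}
\item Let $S$ be an Enriques surface and $H$ very ample.
\item Let $\rho\colon V\to S$ be the $\pp^1$-bundle compactifying $\mathcal{O}_S(-H)$, with zero section $E$ and section at infinity $S_\infty$.
\item Let $\pi\colon V\to C$ be the contraction of $E$ to the vertex of the projective cone.
\end{itemize}
Then $K_V+E+S_\infty\sim\rho^*K_S$ is numerically trivial, so $\pi^*(K_C+S_\infty)=K_V+E+S_\infty$. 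This divisor is Cartier on the smooth $V$. But $K_C+S_\infty$ is not Cartier at the vertex: otherwise $K_V+E$ would be trivial near $E$, giving $K_S=(K_V+E)|_E\sim 0$. These are crepant birational lc Calabi--Yau pairs with local Cartier indices $2$ and $1$, while on both sides $2(K+\Delta)\sim 0$ and $K+\Delta\not\sim 0$. So (1) holds only for the index $\min\{m : m(K+\Delta)\sim 0\}$ of a Calabi--Yau pair. That is the notion in \cite{FMM22} and the one the paper uses later (``index one'' meaning $K+B\sim 0$). Your fallback (pull back, then push forward, principal divisors) is a complete proof of that statement and should be your main argument.

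In (3), simple homotopy equivalence does not preserve dimension: a simplex collapses to a vertex. Dual complexes of Calabi--Yau pairs can be disks. For general lines, $(\pp^2,L_1+L_2+\tfrac12(M_1+M_2))$ has a segment as dual complex and coregularity $0$, whereas $(\pp^2,L_1+\tfrac12(M_1+\dots+M_4))$ has a point and coregularity $1$. So coregularity cannot be read off from the simple homotopy type.

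Your other formulation, ``the maximal number of lc places whose centers meet'', also does not work as stated. On $X$ itself it is not meaningful: for a toric pair, infinitely many lc places have centers through a torus-fixed point. On dlt models, its independence of the chosen model is exactly what has to be shown.

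The missing input is the PL form of \cite[Proposition 11]{dFKX}: crepant birational dlt pairs with $K+\Delta\sim_\qq 0$ (over a common base) have PL-homeomorphic dual complexes, hence complexes of the same dimension. The paper's deduction of (3) from (4) tacitly relies on this too, since (4) as stated only gives simple homotopy equivalence.
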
 

The proof of Proposition~\ref{prop:crep-bir-prop}.(1) and (2) follow from~\cite[Lemma 3.1]{FMM22}. The proof of Proposition~\ref{prop:crep-bir-prop}.(4) is a consequence of~\cite[Proposition 11]{dFKX}. The third statement in Proposition~\ref{prop:crep-bir-prop} follows from the fourth statement.

\begin{definition}\label{def:log-rational}
{\em 
Let $(X,B)$ be a pair.
We say that $(X,B)$ is {\em log rational} if there exists a crepant birational map 
$(\pp^n,H_0+\dots+H_n)\dashrightarrow (X,B)$ where 
the $H_i$'s are the coordinate hyperplanes of the projective space. We say that $X$ is a {\em log rational} variety if there exists a boundary $B$ for which $(X,B)$ is a log rational pair. 
}
\end{definition} 

By Proposition~\ref{def:log-rational}, we know 
that a log rational pair of dimension $n$ is Calabi--Yau, has index one, coregularity zero, and $\mathcal{D}(X,B)\simeq_{\rm PL} S^{n-1}$.
If $T_1$ and $T_2$ are two complete $n$-dimensional toric varieties and $B_1$ and $B_2$ are its reduced toric invariant boundaries, then $(T_1,B_1)\simeq_{\rm cbir}(T_2,B_2)$ holds.
A projective birational torus equivariant morphism
$T_1\rightarrow T_2$ between the toric varieties $T_1$ and $T_2$, corresponds to a refinement of the fan $\Sigma_2$ of $T_2$ into the fan of $\Sigma_1$ (see~\cite[Example 3.4.9]{CLS11}).
The blow-up of a toric strata corresponds to the star subdivision of the corresponding cone~\cite[Proposition 3.3.15]{CLS11}. 
The crepant birational equivalence $(T_1,B_1)\simeq_{\rm cbir} (T_2,B_2)$
follows from the fact that two fans with the same support
always admit a common refinement.
Recently, Adiprasito and Pak proved the strong factorization conjecture for toric varieties which gives a much stronger statement~\cite{AP24}. Any two triangulations admit a common stellar subdivision.
This means that for any two smooth projective toric varieties of the same dimension $T_1$ and $T_2$
there are projective morphisms $p\colon T\rightarrow T_1$
and $q\colon T\rightarrow T_2$ where both $p$ and $q$ are compositions of blow-ups of torus invariant subvarieties. Thus, a pair $(X,B)$ is log rational
if and only if it is crepant birational to a toric pair.

In the case of dimension two, we can check log rationality using following proposition. 

\begin{proposition}\label{prop:log-rat-dim-2}
Let $(X,B)$ be a log pair of dimension two. 
The pair $(X,B)$ is log rational if and only if 
$K_X+B\sim 0$ and $\mathcal{D}(X,B)\simeq_{\rm PL} S^1$.
\end{proposition}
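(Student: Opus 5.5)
The forward implication is formal. If $(X,B)$ is log rational, it is crepant birational to $(\pp^2,H_0+H_1+H_2)$. By Proposition~\ref{prop:crep-bir-prop}(1) it then has Cartier index one, and by (2) it is Calabi--Yau, so $K_X+B$ is Cartier and $\qq$-linearly trivial. To upgrade this to $K_X+B\sim 0$ I will argue directly. Crepant birationality gives $q^*(K_{\pp^2}+\sum H_i)=p^*(K_X+B)$ on a common resolution $Z$. Hence $p^*(K_X+B)$ is the divisor of the pullback of $\Omega_2$, which is principal, and pushing forward gives $K_X+B\sim 0$. By Proposition~\ref{prop:crep-bir-prop}(4), $\mathcal{D}(X,B)$ is simple homotopy equivalent to $\mathcal{D}(\pp^2,\sum H_i)$, a triangle, which is $S^1$. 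In dimension one, any regular $\triangle$-complex homotopy equivalent to $S^1$ that arises as the dual complex of a dlt surface pair is a cycle, since each stratum curve meets at most two others at nodes. So it is PL-homeomorphic to $S^1$.

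For the converse, I first replace $(X,B)$ by a dlt modification and then by a minimal resolution that extracts only divisors with log discrepancy zero, or crepant divisors. The goal is to reach a smooth projective surface $Y$ with a reduced snc boundary $B_Y$ satisfying $K_Y+B_Y\sim 0$. The crepant birational class, the index and the dual complex are preserved throughout. Since $\mathcal{D}\simeq S^1$ and the complex is a graph, I expect $B_Y$ to be a cycle of smooth rational curves; if it is a single node it is a nodal rational curve. The point I need to justify is that no component of $B_Y$ is positive-genus. Adjunction $(K_Y+B_Y)|_{B_i}=K_{B_i}+\sum(\text{other components})$ trivial forces each $B_i$ to meet the others in two points if rational, or in zero points if elliptic. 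A cycle-shaped dual complex excludes the second option. The coefficient-one part is all of $B_Y$ here, because $K_X+B\sim0$ with a boundary that is a cycle forces $B$ reduced; I would check this by adjunction again.

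The main obstacle is showing that $Y$ is rational, so that $(Y,B_Y)$ is a Looijenga pair. Since $-K_Y\sim B_Y$ is effective and nonzero, $\kappa(Y)=-\infty$, so $Y$ is rational or birationally ruled over a curve $C$ of genus $g\ge1$. In the ruled case, I would run the MMP to a conic bundle $Y\to C$ and argue that the components of $B_Y$, being rational, cannot dominate $C$. They would then lie in fibres, which contradicts $B_Y\cdot F=-K_Y\cdot F=2$ for a general fibre $F$. Hence $Y$ is rational, and $(Y,B_Y)$ is a Looijenga pair, using an anticanonical cycle or nodal curve.

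By Proposition~\ref{prop:Looijenga-pairs-are-cluster-type}, after toric blow-ups at nodes, which are crepant, $(Y,B_Y)$ admits a strong toric model $(Y',B')\to(T,B_T)$. The identities $p^*(K_Y+B_Y)=K_{Y'}+B'=q^*(K_T+B_T)$ exhibit $(X,B)\simeq_{\rm cbir}(T,B_T)$. By the common-refinement remark preceding the statement, $(T,B_T)\simeq_{\rm cbir}(\pp^2,H_0+H_1+H_2)$, which gives log rationality.
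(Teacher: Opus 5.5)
Your proposal is correct and follows the same route as the paper: pass to a dlt modification, take the crepant (Du Val) resolution to get a smooth pair with $K+B\sim 0$ and a cycle of rational curves as boundary, recognize it as a Looijenga pair, and apply Proposition~\ref{prop:Looijenga-pairs-are-cluster-type} to reach a toric model. Your adjunction check on the components, your ruled-surface argument that the resolved surface is rational, and your degree count for the forward direction fill in steps the paper leaves implicit, with two small caveats. First, reducedness of $B$ follows at once from integrality: $K_X+B\sim 0$ forces integer coefficients, and log canonicity bounds them by one, so adjunction is not needed. Second, the crepancy of the resolution needs that index one forces the dlt model to be smooth along $B_Y$ and Du Val away from it, which you use but should state explicitly.
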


\begin{proof}
One direction is clear from Proposition~\ref{prop:crep-bir-prop}. Now, assume that $K_X+B\sim 0$ and $\mathcal{D}(X,B)\simeq_{\rm PL} S^1$.
Let $p\colon (Y,B_Y)\rightarrow (X,B)$ be a dlt modification, then $Y$ has finite quotient singularities and it is smooth along $B_Y$.
As $K_Y+B_Y\sim 0$, we conclude that the singularities of $Y$ are $A,D,E$-singularities. 
Thus, $Y$ admits a crepant resolution $q\colon Z\rightarrow Y$.
Let $q^*(K_Y+B_Y)=K_Z+B_Z$. 
By construction, we know that $Z$ is smooth, $K_Z+B_Z\sim 0$, and $\mathcal{D}(Z,B_Z)\simeq_{\rm PL} S^1$. 
Therefore, by Proposition~\ref{prop:Looijenga-pairs-are-cluster-type}, we conclude that $(Z,B_Z)$ is cluster type and so it is log rational. We conclude that $(X,B)$ is log rational as $(X,B)$ and $(Z,B_Z)$ are crepant birational equivalent.
\end{proof}

In other words, a pair $(X,B)$ is log rational if and only if $B$ is an anti-canonical nodal curve. Thus, from Proposition~\ref{prop:Looijenga-pairs-are-cluster-type} and Proposition~\ref{prop:log-rat-dim-2}, we conclude that for smooth surfaces the concepts of cluster type pairs and log rational pairs are the same. In general, there are log rational pairs which are not cluster type (see Example~\ref{ex:ct-sing-surf}). The following is the definition of cluster type pairs from the birational perspective. 

\begin{definition}\label{def:ct}
{\em 
Let $(X,B)$ be a pair.
We say that $(X,B)$ is {\em cluster type} if there exists a crepant birational map 
\begin{equation}\label{eq:crepant-bir} 
\phi\colon (\pp^n,H_0+\dots+H_n)\dashrightarrow (X,B)
\end{equation} 
such that ${\rm Ex}(\phi)\cap \mathbb{G}_m^n$ has codimension at least two in $\mathbb{G}_m^n$. Here, the $H_i$'s are the coordinate hyperplanes of the projective space $\mathbb{P}^n$.
We say that a variety $X$ is of {\em cluster type}
if there exists a boundary $B$ for which $(X,B)$ is cluster type. In this case, we say that $(X,B)$ is a {\em cluster type pair} and that $B$ is a {\em cluster type boundary}.
}
\end{definition} 

In other words, the birational map~\eqref{eq:crepant-bir} can only contract the divisors $H_0,\dots,H_n$.
The following definition is a generalization of the concept of strong toric models given above. 

\begin{definition}\label{def:toric-model}
{\em A {\em toric model} of a Calabi--Yau pair $(X,B)$, is a crepant birational map $(T,B_T)\dashrightarrow (X,B)$ where $(T,B_T)$ is a toric Calabi--Yau pair.}
\end{definition}

We conclude this section by arguing that a log rational pair is very close to being of cluster type.

\begin{proposition}\label{prop:from-lr-to-ct}
Let $(X,B)$ be a log rational pair. 
Then, there exists a crepant birational morphism
$(Y,B_Y)\rightarrow (X,B)$, where $(Y,B_Y)$ is cluster type. 
\end{proposition}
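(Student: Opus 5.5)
Fix a crepant birational map $\phi\colon (\pp^n,H_0+\dots+H_n)\dashrightarrow (X,B)$, as provided by Definition~\ref{def:log-rational}. The plan is to keep $X$ as the target and build $Y$ over $X$ by extracting exactly the divisors that $\phi^{-1}$ contracts onto the torus. Let $E_1,\dots,E_k$ be the prime divisors on $\pp^n$ whose generic point lies in $\mathbb{G}_m^n$ and which are contracted by $\phi$. These are precisely the obstructions to Definition~\ref{def:ct}, and there are finitely many of them, since $\phi$ is an isomorphism on an open set and only finitely many divisors lie in the complement. Each $E_i$ is a divisor over $X$. Crepancy gives $a_{E_i}(X,B)=a_{E_i}(\pp^n,\sum H_j)=1$, because $E_i$ is not a component of $\sum H_j$ and $\pp^n$ is smooth there. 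So each $E_i$ is a divisor over $X$ with log discrepancy $1$, which is at most $1$.

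The next step is to extract exactly $E_1,\dots,E_k$ by a projective birational morphism $p\colon Y\to X$. I would use the standard extraction result (\cite[Corollary 1.4.3]{BCHM10}): for a pair $(X,B)$ and finitely many divisors $E_i$ over $X$ with $a_{E_i}(X,B)\le 1$, there is a $\qq$-factorial model $Y\to X$ extracting precisely these divisors. That result is stated for klt pairs, while $(X,B)$ is only lc, of coregularity zero. To get around this, I would first pass to a $\qq$-factorial dlt modification $(X',B')\to(X,B)$. This does not extract the $E_i$, since those have log discrepancy $1\neq 0$. I would then perturb: for small $\epsilon$, the pair $(X',(1-\epsilon)B')$ is klt, and $a_{E_i}(X',(1-\epsilon)B')$ is close to $1$ and in fact $\le 1$ after adjustment, because the centers of the $E_i$ are not contained in $B'$ and so the log discrepancy is unaffected. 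Running the BCHM extraction over $X$, and then discarding the components of the dlt modification that are not among the $E_i$ (by an MMP over $X$ contracting them), gives $p\colon Y\to X$ whose exceptional divisors are exactly the $E_i$.

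Set $K_Y+B_Y=p^*(K_X+B)$. Because $a_{E_i}=1$, the coefficients of the $E_i$ in $B_Y$ vanish, so $B_Y$ is the strict transform of $B$, which is effective. Hence $(Y,B_Y)$ is a pair, and it is crepant birational to $(X,B)$ and therefore to $(\pp^n,\sum H_j)$. It remains to check the cluster type condition for $\psi=p^{-1}\circ\phi\colon (\pp^n,\sum H_j)\dashrightarrow (Y,B_Y)$. Any divisor of $\pp^n$ meeting $\mathbb{G}_m^n$ was either not contracted by $\phi$, in which case it is not contracted by $\psi$, or is one of the $E_i$, which is now a divisor on $Y$. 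So ${\rm Ex}(\psi)\cap\mathbb{G}_m^n$ has codimension at least two, and Definition~\ref{def:ct} applies.

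I expect the main obstacle to be the extraction step in the non-klt setting, that is, producing a model over $X$ that extracts exactly the $E_i$ and nothing else. The dlt-modification-plus-perturbation approach above is my first attempt. If it fails, the fallback is to extract the $E_i$ together with all lc places via a log resolution, and then run a $(K+B_Y)$-trivial MMP over $X$ that contracts the unwanted divisors; for that I would use that these divisors are exceptional and have coefficient $1$, which makes the MMP contract them by negativity.
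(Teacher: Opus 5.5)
\paragraph{The gap: the extraction step.}
You assert that ``the centers of the $E_i$ are not contained in $B'$ and so the log discrepancy is unaffected.'' This is false in general, and it fails in the most basic situation.

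\begin{itemize}
\item Suppose $X'$ is terminal (e.g.\ smooth) at the generic point of the center $Z_i$ of $E_i$, and $Z_i\not\subset {\rm Supp}\,B'$. Since $E_i$ is exceptional over $X'$, we get $a_{E_i}(X',B')=a_{E_i}(X')>1$, which contradicts $a_{E_i}(X',B')=1$.
\item A concrete instance: blow up $\pp^2$ at a general point of $H_0$ and at $H_1\cap H_2$. Then contract the strict transform of the line through these two points, which is a $(-1)$-curve meeting the boundary once. The resulting crepant map $\phi$ to a log smooth pair $(X,B)$ contracts this line, which meets the torus, to a non-nodal point of $B$.
\end{itemize}

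Whenever $Z_i\subset B'$, you get $a_{E_i}(X',(1-\epsilon)B')=1+\epsilon\,{\rm mult}_{E_i}B'>1$. So shrinking the boundary moves the log discrepancy the wrong way, and \cite[Corollary 1.4.3]{BCHM10} does not apply.

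The missing idea is to push the $E_i$ strictly below log discrepancy one while keeping the singularities controlled:
\begin{itemize}
\item First pass to a higher dlt modification so that no $Z_i$ is a stratum of $\lfloor B'\rfloor$. This case really occurs: blowing up a node of a surface boundary, then a general point of the new exceptional curve, gives a divisor of log discrepancy $1$ centered at the node.
\item Then take an effective $\Gamma$ on $X'$ containing all the $Z_i$, chosen so that $(X',B'+\epsilon\Gamma)$ stays dlt for $0<\epsilon\ll 1$. Now $a_{E_i}(X',B'+\epsilon\Gamma)<1$.
\end{itemize}
This is what the paper does; it then extracts exactly $E_1,\dots,E_k$ over $X'$ via \cite[Theorem 1]{Mor19}. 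You could instead apply BCHM to the klt pair $(X',(1-\delta)B'+\epsilon\Gamma)$ with $0<\delta\ll\epsilon$.

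\paragraph{The discarding step.}
Your step of discarding the divisors of the dlt modification by an MMP over $X$ is also unjustified. Those divisors are lc places, so they carry coefficient exactly $1$ in the crepant boundary. There is no room to add positivity along them, and no negativity argument forces their contraction.

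Your fallback has the same confusion. Negativity contracts the exceptional divisors of positive log discrepancy once you give them coefficient $1$; it does not contract the lc places. Read correctly, the fallback is a valid alternative route: put coefficient $0$ on the $E_i$ and coefficient $1$ on all other exceptional divisors of a log resolution, then run the MMP over $X$, which terminates by the same argument that produces dlt modifications. It outputs the $E_i$ together with some lc places.

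But discarding is unnecessary anyway: just keep $Y\to X'\to X$.
\begin{itemize}
\item The extra exceptional divisors have log discrepancy $0$ with respect to $(\pp^n,H_0+\dots+H_n)$. Hence their centers on $\pp^n$ lie in $H_0+\dots+H_n$, and they do not affect ${\rm Ex}(\psi)\cap\mathbb{G}_m^n$.
\item $B_Y$ is then the strict transform of $B$ plus these divisors with coefficient one, which is still effective.
\end{itemize}
With these changes your final verification of the cluster type condition goes through unchanged.
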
 

\begin{proof}
Let $\phi\colon (\pp^n,H_0+\dots+H_n)\dashrightarrow (X,B)$ be a crepant birational map. 
Let $E_1,\dots,E_k$ be the closure in $\mathbb{P}^n$
of the divisors in ${\rm Ex}(\phi)\cap \mathbb{G}_m^n$. 
As these pairs are crepant, we have 
\[
1=a_{E_i}(\pp^n,H_0+\dots+H_n) = 
a_{E_i}(X,B),
\]
for every $i\in \{1,\dots,k\}$. 
Let $(X_1,B_1)\rightarrow (X,B)$ be a $\qq$-factorial dlt modification of $(X,B)$. 
Note that 
\[
a_{E_i}(X,B)=a_{E_i}(X_1,B_1)
\]
for every $i\in \{1,\dots,k\}$. 
Let $Z_1,\dots,Z_k$ be the centers on $X_1$ of the prime divisors $E_1,\dots,E_k$. 
Passing to a higher dlt modification, we may assume that 
no $Z_i$ equals a strata of $B_1$.
Let $\Gamma_1$ be an effective divisor on $X_1$ that contains $Z_1\cup \dots\cup Z_k$. 
For $\epsilon>0$ small enough, the pair $(X_1,B_1+\epsilon \Gamma_1)$ is dlt and 
\[
a_{E_i}(X_1,B_1+\epsilon \Gamma_1) <1 
\]
for every $i\in \{1,\dots,k\}$. 
By~\cite[Theorem 1]{Mor19}, there is a projective birational morphism $Y\rightarrow X_1$ that extracts precisely the divisors $E_1,\dots,E_k$. 
Write $p\colon Y\rightarrow X$ for the composition.
Therefore, we have $p^*(K_X+B)=K_Y+B_Y$ where $(Y,B_Y)$ is a Calabi--Yau pair. By construction, we have a crepant birational map 
\[
\psi\colon (\pp^n,H_0+\dots+H_n)\dashrightarrow (Y,B_Y)
\]
that contracts no divisors in $\mathbb{G}_m^n$.
Therefore, the pair $(Y,B_Y)$ is cluster type.
Thus, we have a crepant birational map 
$(Y,B_Y)\rightarrow (X,B)$ from a cluster type pair.
\end{proof} 

In the case of surfaces, given a log rational surface $(X,B)$, we can obtain a cluster type surface by resolving the singularities of $X\setminus B$. This statement follows from Proposition~\ref{prop:Looijenga-pairs-are-cluster-type}.
In the following section, we discuss log rational and cluster type pairs of dimension three.

\section{The cluster type condition in dimension three}
\label{sec:dim-3}
In this section, we discuss log rational and cluster type pairs of dimension three. 
The first question is whether a Calabi--Yau pair $(X,B)$ of dimension three is log rational or not.
By Proposition~\ref{prop:crep-bir-prop}, we know that such 
threefold pair must satisfy $K_X+B\sim 0$ and $\mathcal{D}(X,B)\simeq_{\rm PL} S^2$. 
In the case of surfaces, controlling the index and the dual complex suffices to check whether a pair is log rational (Proposition~\ref{prop:log-rat-dim-2}).
However, in dimension three and higher, this is not enough.
In~\cite{Ka20}, Kaloghiros gives an example of a Calabi--Yau threefold pair $(X,B)$ satisfying the following three conditions:
\begin{itemize}
\item $K_X+B\sim 0$;
\item $\mathcal{D}(X,B)\simeq_{\rm PL} S^2$; and 
\item $X$ is a birationally rigid variety.
\end{itemize} 
Therefore, the pair $(X,B)$ is not log rational.
Indeed, if $(X,B)$ is log rational, then $X$ must be a rational variety. In~\cite[Theorem 1.2.1]{Duc24}, Ducat proved the following theorem. 

\begin{theorem}\label{thm:ducat-dim-3}
Let $(\pp^3,B)$ be a Calabi--Yau pair of index one with 
$\mathcal{D}(\pp^3,B)\simeq_{\rm PL} S^2$. Then, the pair $(\pp^3,B)$ is log rational.
\end{theorem}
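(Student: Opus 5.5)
Since $K_{\pp^3}+B\sim 0$ and $(\pp^3,B)$ is log canonical, $B$ is a reduced quartic surface. The dual complex condition forces $B$ to have coregularity zero. The plan is to run through the possible shapes of such a quartic and, in each case, write down an explicit crepant birational map to a toric pair. For the toric end we use the fact recalled above: a pair is log rational if and only if it is crepant birational to a toric pair.

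\textbf{Step 1: reducible $B$.} We split according to the number of components. If $B=H_0+H_1+H_2+H_3$ consists of four planes in general position, the pair is toric and there is nothing to prove. In every other reducible case, we choose a component $S$ of $B$ and a projection that realizes $\pp^3\dashrightarrow \pp^1\times\pp^2$, or a similar rational fibration, compatible with $B$. Typical choices are projecting from a plane component, or from a point lying on a singular stratum of $B$. This reduces the problem to a conic bundle, or to a family of log Calabi--Yau surface pairs over a toric base. Fiberwise, Proposition~\ref{prop:log-rat-dim-2} together with Proposition~\ref{prop:Looijenga-pairs-are-cluster-type} gives toric models. We then globalize them by performing the fiberwise blow-ups of nodes over the base; this is possible because the index is one and the dual complex is $S^2$. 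A cubic plus a plane, or two quadrics, are handled by projecting from a suitable $0$-dimensional lc center.

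\textbf{Step 2: irreducible $B$.} Here $B$ is a quartic surface that is non-normal or has non-klt (elliptic/cusp-type) singularities, since coregularity zero cannot occur for normal $B$ with only Du Val singularities. Passing to a dlt modification, the minimal lc centers are points, and the condition $\mathcal{D}\simeq S^2$ pins down the configuration. We then use the classification of non-normal quartic surfaces (Urabe, Umezu) and of quartics with degenerate cusp singularities. In each case we pick a point $p$ in the deepest stratum, for instance a triple point or a point on the double curve. Blowing up at $p$ and projecting from $p$ gives a crepant map to a $\pp^1$-bundle over $\pp^2$ whose boundary contains two sections, or a section and a conic bundle. This reduces the problem to dimension two as in Step 1.

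\textbf{Main obstacle.} Each projection must be crepant, so the chosen center must be an lc center; it must also preserve index one, and the resulting lower-dimensional pairs must again have coregularity zero. The hardest part will be the irreducible non-normal quartics whose double curve is, for example, a twisted cubic or an elliptic-type curve, where no obvious projection center exists. Our first approach there would be a weighted blow-up at a degenerate cusp point, followed by running a $K+B$-trivial MMP to land on a Mori fiber space. We would then use coregularity zero to show that the general fiber is a toric pair.
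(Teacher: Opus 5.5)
There is a genuine gap, and it is in the step you treat as routine: turning a projection into a toric model. Crepancy of the blow-up is automatic; what matters is the boundary it produces.

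If $p\in B$ has multiplicity $m$ and $\pi\colon Y\to\mathbb{P}^3$ blows up $p$, then $\pi^*(K_{\mathbb{P}^3}+B)=K_Y+\tilde B+(m-2)E$. So projection from $p$ gives fibres $(\mathbb{P}^1,0+\infty)$ whose two boundary points are \emph{sections} only in two situations: when $m=3$, or when $m=2$ and the two residual points of a general line through $p$ lie on different components of $B$.

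In the triple-point case your idea does work. Write $B=\{f_3(x,y,z)w+f_4(x,y,z)=0\}$ with $p=[0:0:0:1]$. The substitution $u=f_3w+f_4$ turns $\frac{dx\wedge dy\wedge dw}{f_3w+f_4}$ into $\frac{dx\wedge dy}{f_3}\wedge\frac{du}{u}$. Hence $(\mathbb{P}^3,B)\simeq_{\rm cbir}(\mathbb{P}^2\times\mathbb{P}^1,\,C\times\mathbb{P}^1+\mathbb{P}^2\times\{0\}+\mathbb{P}^2\times\{\infty\})$ with $C=\{f_3=0\}$, and Proposition~\ref{prop:log-rat-dim-2} finishes.

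At a double point lying on a single irreducible branch, however, this breaks down. Examples are a general point of a double curve, or a double-point cusp. There $E$ has coefficient $0$ and the residual intersection is an irreducible bisection. The generic fibre is then $\mathbb{P}^1_K$ with a boundary point of degree two, i.e.\ a non-split torus over $K=\mathbb{C}(\mathbb{P}^2)$. Proposition~\ref{prop:log-rat-dim-2} and the Gross--Hacking--Keel toric models are statements over $\mathbb{C}$ and say nothing about such a generic fibre. Your sentence that fibrewise models globalize ``because the index is one and the dual complex is $S^2$'' is exactly the missing argument, and those hypotheses do not control monodromy.

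A concrete case: take $B=H+S_3$ with $S_3$ a smooth cubic surface and $H$ its tangent plane at a general point $q$. This pair is lc of index one with $\mathcal{D}\simeq_{\rm PL}S^2$, yet none of your projections works:
\begin{enumerate}
\item $q$ is the only $0$-dimensional lc center, and it is merely a double point of $B$.
\item $B$ contains no line of multiplicity two.
\item Projection from any double point of $B$ (these are exactly the points of $H\cap S_3$) leaves an irreducible bisection on $S_3$.
\item The conic bundle given by a line $\ell\subset S_3$ has fibres $(\mathbb{P}^2,L_t+Q_t)$ whose two nodes are swapped by monodromy, since they sweep out the irreducible nodal cubic $H\cap S_3$.
\end{enumerate}
This is exactly the configuration that Theorem~\ref{ct-in-p3-red} excludes from being of cluster type.

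Your fallback for the hard irreducible cases does not close the gap either. Every step of an MMP of a Calabi--Yau pair is automatically crepant. So a weighted blow-up at an lc place followed by an MMP only replaces $(\mathbb{P}^3,B)$ by another index-one, coregularity-zero pair on a rational Mori fibre space. Toric closed fibres again say nothing about the generic fibre. Proving that the new pair is log rational is an instance of Conjecture~\ref{conj:log-rat-dim-3}, not a reduction of the problem.

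What is missing is the ingredient the paper attributes to Ducat: a case-by-case analysis of coregularity-zero quartics using explicit Cremona and cubo-cubic transformations of $\mathbb{P}^3$. Their base loci are chosen so that the maps are crepant for $(\mathbb{P}^3,B)$, and they carry $B$ step by step to the coordinate tetrahedron. This mirrors the planar chain nodal cubic $\to$ conic plus line $\to$ triangle. These maps are not fibrations, and they may contract divisors meeting the torus, which Theorem~\ref{ct-in-p3-red} forces in the example above.

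Two smaller points. First, $\mathbb{P}^3\dashrightarrow\mathbb{P}^1\times\mathbb{P}^2$ is not a fibration of $\mathbb{P}^3$. Second, when you blow up a line of multiplicity $m$ on $B$, the exceptional coefficient is $m-1$, not $m-2$.
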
 

In the case of dimension two 
we have three possible Calabi-Yau pairs $(\pp^2,B)$ 
of index one and coregularity zero. 
These three pairs can be related by Cremona transformations.
Indeed, if $(\pp^2,C_3)$ is a pair consisting of the projective plane and a nodal cubic, then we can perform 
a Cremona transformation $\phi_1\colon \pp^2\dashrightarrow \pp^2$ based at the node of $C_3$ and two other points in $C_3$. By doing so, we obtain a crepant birational map 
\[
\phi_1\colon (\pp^2,C_3) \dashrightarrow (\pp^2,Q_2+L)
\]
where $Q_2$ is a conic and $L$ is a transversal line. 
If we perform a second Cremona transformation 
$\phi_2\colon \pp^2 \dashrightarrow \pp^2$ based on one of the intersection points $Q_2\cap L$ and two other points of $Q_2\setminus L$, then we obtain a crepant birational map
\[
\phi_2\colon (\pp^2,Q_2+L)\dashrightarrow (\pp^2,H_0+H_1+H_2),
\]
where the $H_i$'s are the coordinate hyperplanes of $\pp^2$. 
This argument shows that any nodal curve $B\in |-K_{\pp^2}|$ can be transformed into the toric boundary using Cremona transformations. 
In~\cite{Duc24}, Ducat analyzes the quartic surfaces
$B\in |-K_{\pp^3}|$ of coregularity zero 
and uses Cremona transformations and cubo-cubic transformations 
to transform this quartic into the toric boundary of $\pp^3$.
Ducat proposed the following conjecture.

\begin{conjecture}\label{conj:log-rat-dim-3}
Let $(X,B)$ be a pair of dimension three. Assume that 
\begin{enumerate}
\item $X$ is a rational variety; 
\item $K_X+B\sim 0$; and 
\item $\mathcal{D}(X,B)\simeq_{\rm PL} S^2$. 
\end{enumerate} 
Then, the pair $(X,B)$ is log rational.
\end{conjecture} 

The previous provides a conjecturally satisfactory answer to the problem of log rationality in dimension three.
This conjecture has been checked in a few cases.
In~\cite{LMV24}, Loginov, Vasilkov, and the author checked the conjecture for some boundaries on smooth rational Fano threefolds. 
More precisely, they proved the following theorem.

\begin{theorem}\label{thm:log-rat-smooth-Fano}
Let $X$ be a general rational smooth Fano threefold. 
Then, the variety $X$ is log rational. 
\end{theorem}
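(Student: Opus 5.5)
The plan is to exhibit, for each of the 105 deformation families of smooth Fano threefolds that are rational, an explicit anticanonical boundary $B\in|-K_X|$ on a general member $X$ such that $(X,B)$ is crepant birational to $(\pp^3,H_0+H_1+H_2+H_3)$. Since log rationality is a crepant birational invariant, it suffices to connect $(X,B)$ by a chain of crepant birational maps to a pair already known to be log rational. The natural targets are toric pairs, which are log rational by the common-refinement argument above, and pairs $(\pp^3,B')$ of index one with $\mathcal{D}(\pp^3,B')\simeq_{\rm PL} S^2$, which are log rational by Theorem~\ref{thm:ducat-dim-3}.

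The key steps, in order, are as follows.

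First, I will use the Mori--Mukai classification together with Iskovskikh's classification in Picard rank one. For $\rho(X)\ge 2$, most families are obtained as blow-ups of $\pp^3$, of a quadric, of $V_5$, or of a product $\pp^1\times S$ along curves, or as divisors in toric varieties. For such an $X$ described as a blow-up $X\to X'$ of a smooth curve $C$, I choose $B'\in|-K_{X'}|$ of coregularity zero containing $C$, with $C$ lying in the smooth locus of a single component of $B'$. Then the strict transform $B$ satisfies $K_X+B=f^*(K_{X'}+B')$, so $(X,B)\simeq_{\rm cbir}(X',B')$. Inducting on the picard rank through such chains reduces to rank one, or to toric or conic bundle/del Pezzo fibration bases where a toric boundary is directly available.

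Second, for rank one rational families ($\pp^3$, $Q$, $V_5$, $V_{22}$, $V_{18}$, $V_{16}$, $V_{12}$, and the quartic double solid excluded as non-rational), I will use the classical rationality constructions. These are Sarkisov links from projecting from a point, line, or conic: for example $V_{22}$ projects from a line to $V_5$, $V_5$ projects from a line to $Q$, and $Q$ projects from a point to $\pp^3$. In each link I choose the boundary to contain the center of projection as a stratum of $B$ (a point or curve in the double locus). The link is then crepant for the pairs, and the image boundary on the target again has index one and dual complex $S^2$. Ending at $\pp^3$, Theorem~\ref{thm:ducat-dim-3} finishes the argument.

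The main obstacle will be the existence step at each link: producing, on a general $X$, a boundary $B\in|-K_X|$ of coregularity zero that simultaneously contains the link center with the right local structure (e.g.\ the line lies in the intersection of two components, or in a component with multiplicity allowing a log canonical place of log discrepancy zero). Generality of $X$ makes special lines or conics scarce. For this I would first try degenerate hyperplane-section boundaries $B=S_1+S_2$ (or $S_1+S_2+S_3$) built from sections through the chosen line or conic, checking log canonicity via a toric degeneration near the center. The genus-twelve and $V_{16}$, $V_{18}$ cases, where the double projection is needed, are where I expect the case-by-case verification to be hardest.
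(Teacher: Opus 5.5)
The survey gives no proof of this theorem; it quotes it from \cite{LMV24}, so there is no internal argument to compare with. Your strategy is reasonable: transport an index one, coregularity zero boundary along crepant blow-ups and Sarkisov links down to $\pp^3$, then apply Theorem~\ref{thm:ducat-dim-3}. As written, however, it is a programme whose only nontrivial step is left open, and the induction is set up incorrectly.

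The blow-up computation gives $(X,B)\simeq_{\rm cbir}(X',B')$ only for a boundary $B'$ that you are forced to choose through the centre $C$. The inductive hypothesis that $X'$ is log rational supplies \emph{some} log rational boundary on $X'$, not one containing $C$. Since $X$ is general, $C$ is a general point of its Hilbert scheme, so you cannot choose $B'$ first and put $C$ inside it. What must actually be shown is that the bottom model carries one coregularity zero boundary that contains all the successive general centres and is adapted to the remaining links. This is automatic only where every index one coregularity zero boundary is already known to be log rational:
on $\pp^3$ this is Ducat's theorem; on the quadric $Q$ it follows after projecting from a point of multiplicity at least two of the boundary, which exists because any zero-dimensional lc centre is such a point.

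Your requirement that link centres be strata is also stronger than necessary. A link is crepant with effective target boundary as soon as every divisor it extracts has $a_E(X,B)\le 1$. So a line only needs to lie in $B$, and a point only needs ${\rm mult}_pB\ge 2$.

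The step you single out as hardest fails as proposed. For $V_{12},V_{16},V_{18},V_{22}$ one has ${\rm Pic}(X)=\zz H$ with $H=-K_X$. Hence every member of $|-K_X|$ is an irreducible hyperplane section, and boundaries of the form $S_1+S_2$ or $S_1+S_2+S_3$ do not exist. A coregularity zero boundary there is a very degenerate irreducible section, and producing one through a given line on a general $X$ is precisely what is missing.

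The way around this is to work upward from the simpler side of the link. Take the double projection $V_{22}\dashrightarrow V_5$, whose inverse blows up a rational normal quintic $\Gamma$. The inverse map contracts the unique hyperplane section $S_\Gamma$ of $V_5$ containing $\Gamma$ onto $\ell$. If $\Gamma\not\subset S'$, the divisor over $\Gamma$ has log discrepancy one for $(V_5,S_\Gamma+S')$. So a coregularity zero pair $(V_5,S_\Gamma+S')$ pushes forward crepantly to $(V_{22},B)$, where $B$ is a hyperplane section singular along $\ell$. The pair $(V_5,S_\Gamma+S')$ is itself log rational: project from a line in $S_\Gamma$ to $Q$, then to $\pp^3$.

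Finally, families that are not blow-ups of simpler Fano threefolds need a separate argument. Examples are the conic bundles 2-18 and 2-24, and the del Pezzo fibration over $\pp^1$ obtained from $V_{12}$ by double projection from a line. A toric base does not by itself give a boundary on the total space. The rational family $V_4$ (intersection of two quadrics) is also missing from your rank one list.
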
 

In the previous statement, we say that a variety is {\em general} if it is general in its deformation space. 
By the work of Iskovskikh, Prokhorov, Mori, and Mukai, there are $105$ families of smooth Fano threefolds~\cite{IP99,MM81}. 
From these, there are $88$ families whose general element is a rational variety. 
Theorem~\ref{thm:log-rat-smooth-Fano} states that a general element of each such $88$ families is a log rational variety.
Note that the {\em general} word in the previous statement is already needed in dimension two.
Indeed, some special del Pezzo surfaces of degree one are not log rational. 
It is still unclear which smooth Fano threefolds are cluster type. 

Conjecture~\ref{conj:log-rat-dim-3} is known for $X\simeq \pp^3$ by the work of Ducat~\cite{Duc24} and in some cases of pairs $(X,B)$ where $X$ is a smooth Fano threefold by~\cite{LMV24}. We propose the following problem.

\begin{problem}\label{prob:toric-case-log-rationality}
Let $(T,B_T)$ be a Calabi--Yau pair of index one and coregularity zero where $T$ is a toric threefold. Prove that the pair $(T,B_T)$ is log rational.
\end{problem}

The previous problem can be regarded as a toric case of the three-dimensional log rationality conjecture~\ref{conj:log-rat-dim-3}. This seems to be the most natural next case to tackle.

In some sense, the relation of cluster type pairs and log rational pairs is similar to that of uniformly rational and rational varieties. A $n$-dimensional variety $X$ is said to be {\em uniformly rational} if for every point $x\in X$ there is a neighborhood $x\in U\subseteq X$ which is isomorphic to an open subset $V\subseteq \pp^n$. It is an open question to decide whether every rational variety is indeed rational variety. In dimension two, every rational variety is uniformly rational. By the work of Cutkosky~\cite{Cut07}, and Adiprasito-Pak~\cite{AP24}, the following statement holds.

\begin{theorem}\label{thm:uniformization}
Let $X$ be a smooth rational threefold.
Then, there exists a projective birational morphism
$Y\rightarrow X$ which is a composition of blow-ups of smooth subvarieties, such that $Y$ is uniformly rational.
\end{theorem}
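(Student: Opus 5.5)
The plan is to reduce uniform rationality to local toric structure. The point is that a smooth projective toric variety, and more generally any smooth variety locally isomorphic to an open subset of a smooth toric variety, is uniformly rational: every point of a smooth toric variety lies in an affine chart $\mathbb{A}^n$, which is open in $\pp^n$. Since $X$ is rational, we fix a birational map $\phi\colon \pp^3\dashrightarrow X$. The aim is to produce a sequence of blow-ups $Y\rightarrow X$ along smooth centers such that every point $y\in Y$ has an open neighbourhood isomorphic to an open subset of some smooth toric threefold. Equivalently, this is an open subset of a variety obtained from $\pp^3$ by blow-ups that are locally toric.

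First, resolve the indeterminacy of $\phi^{-1}$ by blow-ups of $X$ along smooth centers. Hironaka's principalization allows us to assume each center is smooth and that the exceptional locus is simple normal crossing. This gives $X'\rightarrow X$ together with a birational morphism $X'\rightarrow \pp^3$.

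Next, apply Cutkosky's strong factorization in dimension three (\cite{Cut07}) to the birational morphism $X'\rightarrow \pp^3$. It is a local statement: after further blow-ups of smooth centers on $X'$, the composite map to $\pp^3$ becomes, locally at every point, a composition of blow-ups along smooth centers. Cutkosky's work also reduces the remaining combinatorial problem to a toroidal (monomial) one. Locally along the exceptional divisors, the map is given by monomial transformations relative to simple normal crossing divisors, so in suitable étale or analytic coordinates it is toric.

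The main obstacle is the step where the local toroidal description must be turned into a genuinely Zariski-local isomorphism with an open subset of a toric variety. It also requires closing the factorization, i.e. finding a common refinement of the two fans that is reachable by stellar subdivisions on both sides. For the combinatorial part I would invoke Adiprasito--Pak~\cite{AP24}: any two triangulations admit a common stellar subdivision. Hence toric birational maps factor as stellar blow-ups on both sides, and we can keep blowing up $X'$ along smooth toroidal strata until the resulting $Y$ also maps to an iterated blow-up $T$ of $\pp^3$ along smooth centers by a map that is locally an isomorphism. For the Zariski-local issue, first try to use that blow-ups of $\pp^3$ (or of $\mathbb{A}^3$) along linear, torus-invariant centers stay covered by affine spaces. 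Then check that a point of a blow-up of a uniformly rational variety along a smooth center that is locally linear in a chart again has a neighbourhood isomorphic to an open subset of $\pp^3$. The case of centers that are curves requires choosing the chart so that the curve becomes a coordinate line, and this is where the threefold hypothesis and Cutkosky's results are essential.

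Finally, conclude that each point of $Y$ lies in an open set isomorphic to an open subset of a blow-up of $\mathbb{A}^3$ along coordinate strata. Such a blow-up is toric and covered by copies of $\mathbb{A}^3$, hence each point has a neighbourhood isomorphic to an open subset of $\pp^3$, so $Y$ is uniformly rational.
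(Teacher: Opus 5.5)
Your first half matches what the paper relies on. The paper gives no argument beyond attributing the statement to Cutkosky and Adiprasito--Pak. The natural reading is yours: toroidalize with Cutkosky, then close the combinatorics with common stellar subdivisions, producing a $Y$ obtained from $X$ by smooth blow-ups and also from $\pp^3$ by smooth blow-ups.

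One correction: Cutkosky's toroidalization also blows up the target. The $\pp^3$-side therefore consists of blow-ups along arbitrary smooth points and curves, not toric or linear ones. Your phrase ``locally at every point a composition of blow-ups'' should be replaced by this global statement.

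The genuine gap is the last step. You want every curve center to become a coordinate line in a Zariski chart, so that $Y$ is locally a toric blow-up of $\mathbb{A}^3$. This is impossible for a center of positive genus: if an open immersion $U\hookrightarrow\mathbb{A}^3$ sends $Z\cap U$ into a line, then $Z$ is rational. Such centers cannot be avoided. Take $X={\rm Bl}_C(\pp^3)$ with $C$ a smooth curve of genus $\geq 1$. The strict transform on $Y$ of $E_C$ is birational to $C\times\pp^1$ and exceptional over $\pp^3$. Hence every factorization of $Y\to\pp^3$ into smooth blow-ups has a center isomorphic to $C$, and near that divisor no chart of the kind you describe exists. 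Neither Cutkosky's results nor the threefold hypothesis help here, since toroidal coordinates only give \'etale maps to $\mathbb{A}^3$, not open immersions.

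What is missing is the lemma that blowing up a uniformly rational threefold along \emph{any} smooth curve $Z$ is again uniformly rational. Its proof does not straighten $Z$; it only makes $Z$ planar.

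\begin{itemize}
\item Near $p\in Z$, a general linear projection maps $Z$ isomorphically onto a smooth branch of a plane curve $\{g(x_1,x_2)=0\}$.
\item The triangular automorphism $x_3\mapsto x_3-R(x_1,x_2)$, with $R$ regular and equal to $x_3$ on $Z$ near $p$, puts $Z$ in the form $\{x_3=0,\ g=0\}$ in a Zariski chart.
\item The shears $(x_1,x_2,x_3)\mapsto(x_1+\lambda x_3,\,x_2+\mu x_3,\,x_3)$ preserve the ideal $(x_3,g)$. So the blow-up contains the graph charts $\{x_3=g(x_1+\lambda x_3,\,x_2+\mu x_3)\,s\}$, each isomorphic to $\mathbb{A}^3$ in the sheared coordinates.
\item Each such chart misses exactly one point of the fiber over $p$: the normal direction of $\{g(x_1+\lambda x_3,x_2+\mu x_3)=0\}$. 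That point is determined by $\lambda\,\partial_1 g(p)+\mu\,\partial_2 g(p)$ and $(\partial_1 g(p),\partial_2 g(p))\neq 0$, so two charts cover the fiber.
\end{itemize}

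Point blow-ups are locally toric, hence harmless. Applying the lemma inductively along the tower starting at $\pp^3$ then gives the uniform rationality of $Y$.
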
 

In any dimension, a log rational pair admits a volume-preserving blow-up which is cluster type. 
In dimension three, every rational threefold admits a blow-up which is a uniformly rational threefold. 
This draws a similarity between the concepts of cluster type varieties and uniformly rational varieties.

In~\cite{AdSFM24}, Alves da Silva, Figueroa, and the author study whether a pair $(\pp^3,B)$ is of cluster type. 
A partial answer is given by the following theorem.

\begin{theorem}\label{ct-in-p3}
Let $(\pp^3,B)$ be a Calabi--Yau pair of index one and coregularity zero. Assume that $B$ is non-normal. 
If the non-normal locus of $B$ is not contained in a plane, then $(\pp^3,B)$ is of cluster type.
\end{theorem}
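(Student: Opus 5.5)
The plan is to exhibit explicitly an embedding in codimension one $\mathbb{G}_m^3\dashrightarrow \pp^3\setminus B$. Equivalently, we want a crepant birational map $(\pp^3,H_0+\dots+H_3)\dashrightarrow(\pp^3,B)$ contracting no divisor meeting the torus, in the sense of Definition~\ref{def:ct}. The map will be built as a composition of explicit volume-preserving maps (projections, Cremona and cubo-cubic transformations), following Ducat's strategy in Theorem~\ref{thm:ducat-dim-3}. At each step we track which divisors are contracted.

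\textbf{Step 1: structure of the non-normal locus.} Since $(\pp^3,B)$ is Calabi--Yau of index one and coregularity zero, $B$ is a quartic whose double locus is a curve $\Gamma$ of low degree. Its components are lines, conics or twisted cubics. Because $B$ has slc-type double crossings generically along $\Gamma$, they form a connected configuration. The hypothesis that $\Gamma$ is not contained in a plane restricts us to a finite list of cases. If $B$ is reducible, the cases include a reducible configuration of lines, such as two skew lines or a non-planar chain of lines, or a twisted cubic. If $B$ is irreducible, the cases include a quartic singular along two skew lines (a non-planar double locus) or along a twisted cubic. I would first write down this classification, using the degree bound coming from $B\in|-K_{\pp^3}|$ and the fact that $\mathcal{D}(\pp^3,B)\simeq_{\rm PL}S^2$.

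\textbf{Step 2: a fibration or projection from the non-planar double locus.} Suppose $\Gamma$ contains two skew lines $\ell_1,\ell_2$. The rational map $\pp^3\dashrightarrow \pp^1\times\pp^1$ given by the pencils of planes through $\ell_1$ and $\ell_2$ is birational onto a $\pp^1$-bundle structure: blowing up $\ell_1,\ell_2$ gives a $\pp^1$-bundle over $\pp^1\times\pp^1$. Since $B$ has multiplicity two along each $\ell_i$, the blow-ups are crepant for $K+B$. The strict transform of $B$ then restricts to the fibers as two points, i.e.\ a section-type boundary, and on the base we obtain a Looijenga-type pair.

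In the twisted cubic case, I would instead use the cubo-cubic map centered at $\Gamma$, which is crepant because $B$ is double along $\Gamma$. This transforms $B$ into a quartic with a simpler (planar-free or toric) configuration.

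In each case we reduce to a pair fibered over a surface, or to one of the previously handled boundaries. We then combine Theorem~\ref{thm:ct-vs-Loo} on the base with a relative torus $\mathbb{G}_m$ in the fibers to produce a torus $\mathbb{G}_m^3$ in the complement.

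\textbf{Step 3: checking codimension one.} The crucial point is that all maps used are centered on the non-normal locus of the boundary. Hence every exceptional divisor, either extracted or contracted, lies over $B$ or over coordinate hyperplanes, so no divisor of $\mathbb{G}_m^3$ is contracted. This must be checked case by case.

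I expect the main obstacle to be the classification in Step~1 together with this verification. In particular, in configurations where $\Gamma$ meets itself, the cubo-cubic or bundle map may contract a divisor, such as a plane or quadric through $\Gamma$, that meets the torus. In that situation I would first try to choose the base points of the auxiliary transformations on $B$ itself, as in the nodal cubic Cremona construction above, so that the contracted divisors are components of the boundary.
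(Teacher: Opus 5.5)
\textbf{Verdict: genuine gap.} The survey does not prove this statement. It quotes it from \cite{AdSFM24}, whose proof combines cluster type techniques with Jessop's results on non-normal quartic surfaces \cite{Jes16} and proceeds case by case. Your outline has that shape, but at each stage the substance is either missing or incorrect.

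\textbf{Step 1.} The structural claims are false. The non-normal locus need not be connected: two skew double lines, which you list yourself, is a disconnected example. Its components also need not be lines, conics or twisted cubics. If $Q_1,Q_2$ are smooth quadrics tangent at a single point, then $Q_1\cap Q_2$ is an irreducible nodal quartic curve spanning $\pp^3$. The pair $(\pp^3,Q_1+Q_2)$ is lc of index one and coregularity zero, since the $(1,1,2)$-weighted blow-up of the tangency point is an lc place.

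\textbf{Step 2.} Only the case of two skew double lines $\ell_1,\ell_2$ actually works. There $B$ is a form of bidegree $(2,2)$. On $Y=\mathrm{Bl}_{\ell_1\cup\ell_2}\pp^3\to\pp^1\times\pp^1$ the crepant pull-back is $E_1+E_2+\psi^*D$, with $D\in|-K_{\pp^1\times\pp^1}|$ nodal by coregularity zero. The two boundary points on a fibre come from the sections $E_1,E_2$; the strict transform of $B$ is vertical, not horizontal as you wrote. Theorem~\ref{thm:ct-vs-Loo} on the base, plus triviality of $\mathbb{G}_m$-torsors over the torus of the base, then gives $\mathbb{G}_m^3$.

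The twisted cubic case uses a map that does not exist. Cubo-cubic transformations are based on sextic curves of genus three. Quadrics through a twisted cubic $C$ instead define $\pp^3\dashrightarrow\pp^2$, with the chords of $C$ as fibres. A quartic double along $C$ is the pull-back of a conic $Q$. On $\mathrm{Bl}_C\pp^3\to\pp^2$ the exceptional divisor is a bisection branched over the conic $Q_0$ of tangent lines. Hence the complement of the boundary is not a $\mathbb{G}_m$-torsor over the base (fibres over $Q_0$ are $\mathbb{A}^1$), and the base pair is $(\pp^2,Q+\frac{1}{2}Q_0)$, not a Looijenga pair. Configurations with neither two skew double lines nor a double twisted cubic, such as $H_1+H_2+Q$ or the tangent quadrics above, are not treated at all.

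\textbf{Step 3 (the decisive gap).} This is where the theorem actually lives. The inference ``every exceptional divisor lies over $B$, hence none meets the torus'' is invalid, and its hypothesis is vacuous. For any crepant birational map out of $(\pp^3,B)$, every divisor exceptional over $\pp^3$ that appears on the other side has its centre in $B$. Indeed, a divisor whose centre is not in $B$ has log discrepancy at least $2$.

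What Definition~\ref{def:ct} actually requires is stronger: every divisor exceptional over $\pp^3$ that survives on the toric model must be an lc place of $(\pp^3,B)$. Blowing up a curve along which $B$ has multiplicity one, or a general point of the double curve, gives a divisor of log discrepancy $1$. If such a divisor survives, it meets the torus. This already happens in the plane Cremona map $(\pp^2,C_3)\dashrightarrow(\pp^2,Q_2+L)$ recalled in the paper: the curves over the two smooth base points become lines not contained in $Q_2+L$. It also happens for cubo-cubic maps whose base curve is simple on $B$, where the divisor over that curve becomes a trisecant scroll.

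If your Step~3 were valid, Ducat's reductions behind Theorem~\ref{thm:ducat-dim-3} would make every index one, coregularity zero pair $(\pp^3,B)$ of cluster type. That contradicts Theorem~\ref{ct-in-p3-red}: a plane plus a cubic meeting along a nodal plane cubic is log rational but not of cluster type. That counterexample has planar non-normal locus, so the non-planarity hypothesis must do its work precisely in this verification, not merely in shortening the case list. Your closing suggestion, to put base points on $B$ so that contracted divisors become boundary components, does not help: the Cremona example has all its base points on $B$. It restates what has to be proved.
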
 

The proof of Theorem~\ref{ct-in-p3} mixes modern tools from birational geometry of cluster type pairs with classic tools about birational geometry of quartic threefolds. 
For instance, many results on non-normal quartic surfaces due to Jessop are used (see~\cite{Jes16}).
When $B$ is reducible, the authors give a complete characterization of cluster type pairs.

\begin{theorem}\label{ct-in-p3-red}
Let $(\pp^3,B)$ be a Calabi--Yau pair of index one and coregularity zero. Assume that $B$ is reducible.
Then, the pair $(\pp^3,B)$ is of cluster type unless $B$ is the sum of a hyperplane and a cubic surface intersecting smoothly along a nodal plane cubic.
\end{theorem}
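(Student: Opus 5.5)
I would split the statement into two parts. First, I would classify all reducible quartics $B\subset\pp^3$ for which $(\pp^3,B)$ is Calabi--Yau of index one and coregularity zero. Second, for each type except the excluded one, I would exhibit a torus $\mathbb{G}_m^3\dashrightarrow \pp^3\setminus B$ whose volume form has poles exactly along $B$. For the exceptional type, I would give an obstruction.

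\textbf{Classification.} Since $B$ is a reduced quartic with at least two components, the possible degree splittings are $1+3$, $2+2$, $1+1+2$ and $1+1+1+1$. Coregularity zero means some dlt modification has a zero-dimensional stratum, so $\mathcal{D}(\pp^3,B)\simeq_{\rm PL} S^2$. In each splitting I would use adjunction to the components to record which configurations are lc and have coregularity zero.
\begin{itemize}
\item For $1+1+1+1$, the planes must be in general position, which is the toric case.
\item For $1+1+2$, the pair restricted to the quadric must be a coregularity zero pair on the quadric with boundary equal to the two conics.
\item For $2+2$, the two quadrics must meet along a cycle of rational curves, for example two conics or a line plus a twisted cubic.
\item For $1+3$, write $B=H+S$ with $S$ a cubic. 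The different on $H$ is the plane cubic $C=H\cap S$, and $(H,C)$ has coregularity zero exactly when $C$ is nodal: an irreducible nodal cubic, a conic plus a line, or a triangle of lines.
\end{itemize}

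\textbf{Constructing tori.} My main tool is the surface argument of Section~\ref{sec:dim-3} carried up one dimension. I would find volume-preserving Cremona or cubo-cubic transformations, in the style of Ducat's proof of Theorem~\ref{thm:ducat-dim-3}, that are centered on strata of $B$ and transform $(\pp^3,B)$ into $(\pp^3,H_0+\dots+H_3)$. The point is to check that such a map contracts only divisors lying over $B$, and no divisors meeting $\mathbb{G}_m^3$. Then the inverse map is an embedding in codimension one of $\mathbb{G}_m^3$ into $\pp^3\setminus B$, as Definition~\ref{def:ct} requires. Concretely:
\begin{itemize}
\item Projection from a point of $B$ of high multiplicity gives a conic-bundle or $\pp^1$-bundle structure on which $B$ restricts to toric data on the fibers.
\item When $B$ contains a plane $H$, I would project from a node of $S\cap H$ lying in $H$ to obtain a birational model that is a $\pp^1$-bundle over $\pp^2$ with two sections in the boundary.
\item When the cubic $S$ is singular, with a point $p\in H\cap S$ of multiplicity two on $S$, projecting from $p$ leaves the boundary toric on the general fiber.
\end{itemize}
For $1+3$ with $C$ reducible, I expect a line or conic of $C$ to give an extra boundary component after a blow-up, which should allow a direct toric model.

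\textbf{The exceptional case.} Here $B=H+S$ with $S$ a cubic surface, $C=H\cap S$ an irreducible nodal cubic, and $S$ smooth along $C$. I would mimic Example~\ref{ex:ct-sing-surf} and argue by contradiction. Suppose there is a crepant birational map $\phi\colon(\pp^3,H_0+\dots+H_3)\dashrightarrow(\pp^3,B)$ that contracts no divisor meeting the torus. Then the four toric boundary divisors would have to map onto, or be contracted into, the strata of $B$. Restricting to the stratum where $H$ meets $S$ would force $(H,C)$ to admit a toric-model structure compatible with $S$. Because $S$ is smooth along $C$, no components can be extracted over $C$ except through its single node. I would try to turn this into a contradiction on the number of lc places: count the 1-dimensional lc centers and note that the dual complex has too few vertices near $C$ to support the required toric structure. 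An alternative is a fundamental group or ``torus in the complement'' obstruction via Proposition~\ref{prop:fundamental-ct}, applied to a suitable $\mathbb{G}_m$-bundle over $H\setminus C$.

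\textbf{Main obstacle.} I expect the main difficulty to be this non-existence direction. Unlike in Example~\ref{ex:ct-sing-surf}, the local fundamental groups are abelian, so the naive obstruction does not apply. A genuinely birational invariant will be needed, most likely an analysis showing that any crepant map from the toric pair must extract a divisor over a smooth point of $C$ with log discrepancy forcing a divisor in the torus to be contracted.
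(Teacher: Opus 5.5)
The survey only quotes this theorem from \cite{AdSFM24}, and describes it as a complete characterization, so there is no internal proof to compare against. Judged on its own, your outline has genuine gaps in both directions. The decisive one is the gap you flag yourself.

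\textbf{The exceptional case.} You give no argument, and the invariants you propose cannot detect anything.

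\begin{itemize}
\item The pair is log rational. Let $C=H\cap S$ be an irreducible nodal cubic with node $p$, and let $S$ be smooth at $p$. Then $H=T_pS$. The $(1,1,2)$ weighted blow-up at $p$ (weight $2$ on the equation of $H$) extracts an lc place $E$, and $\mathcal{D}(\pp^3,H+S)$ is two triangles on $H,S,E$, i.e.\ $S^2$. By Theorem~\ref{thm:ducat-dim-3} the pair is log rational.
\item Consequently its lc places, dual complex, coregularity and index coincide with those of $(\pp^3,H_0+\dots+H_3)$. Moreover, by Proposition~\ref{prop:from-lr-to-ct} some crepant blow-up of it is already cluster type. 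So no count of lc places or of vertices of the dual complex can give a contradiction.
\item Your claim that nothing can be extracted over $C$ away from the node is false. Along $C\setminus\{p\}$ the pair is snc, so blowing up $C$ gives an lc place.
\item Adjunction is silent, since $(H,C)$ is cluster type and $(S,C)$ is log rational.
\item $\pi_1(\pp^3\setminus(H\cup S))\simeq\zz$, by Zariski's theorem together with abelianity for nodal plane curves. So Proposition~\ref{prop:fundamental-ct} gives nothing.
\end{itemize}

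An obstruction has to use the non-contraction condition on $\pp^3$ itself. Since $c(\pp^3,H+S)=2$, a torus chart has complement, up to codimension two, equal to $H\cup S\cup F_1\cup F_2$ with exactly two prime divisors $F_i\not\subset H\cup S$. In affine coordinates on $\pp^3\setminus H$, write $S=\{s=0\}$ and $F_i=\{f_i=0\}$. Then the torus chart means that $(s,f_1,f_2)$ is birational and
$\det\bigl(\partial(s,f_1,f_2)/\partial(x,y,z)\bigr)=c\,f_1f_2$ with $c\in\mathbb{C}^*$. What has to be proved is that no such $f_1,f_2$ exist when $S$ is smooth along $C$, and this is absent.

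\textbf{The positive direction.} Even under a literal reading of ``unless'', the constructions are not carried out, and one of them is wrong.

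\begin{itemize}
\item Projection from a node $p$ of $C$ works only if ${\rm mult}_pS=2$. In that case $a_{E_p}(\pp^3,H+S)=0$, and $E_p$ and $S'$ are two sections of ${\rm Bl}_p\pp^3\to\pp^2$. The base is $(\pp^2,L_H+Q)$ with $Q$ the tangent cone, and it is cluster type because $L_H\cap Q$, the tangent cone of $C$ at $p$, consists of two points.
\item If $S$ is smooth at $p$, then $a_{E_p}=1$, and a general line through $p$ meets $S$ in two further points. So the boundary is a double section, not two sections. ``Toric on the general fibre'' does not suffice; compare Example~\ref{ex:ct-sing-surf}.
\item Suppose $C$ contains a line $L$ and $S$ is smooth along $C$. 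Then ${\rm Bl}_L\pp^3\to\pp^1$ has fibres $(\pp^2,L+Q_\lambda)$. But $L\cap Q_\lambda$ is a fibre of the Gauss map $L\to\pp^1$, $x\mapsto T_xS$, which has degree two. The two points, and hence the two tori of $(\pp^2,L+Q_\lambda)$, are exchanged by monodromy, so your ``direct toric model'' does not come out.
\item By contrast, for $H_1+H_2+Q$ with $H_1\cap H_2$ meeting $Q$ in two points $a,b$, these points do not move, and $\pp^3\setminus(H_1\cup H_2\cup Q\cup T_aQ)\simeq\mathbb{G}_m^3$.
\item For two smooth quadrics tangent at the node of their intersection, projection from that node again gives $a_{E_p}=1$. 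So $E_p$ is a non-boundary section meeting the would-be torus. You offer nothing else for the $2+2$ case.
\item The classification is only indicated (``for example''; singular and non-normal components are omitted), so the list of cases to be covered is not fixed.
\end{itemize}
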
 

It is still unclear how to classify cluster type pairs of the form $(\pp^3,B)$. Or equivalently, how to classify quartic surfaces which admit an algebraic torus $\mathbb{G}_m^3$ in their complements. The following problem is still open.

\begin{problem}\label{prob:quartic-surfaces}
Classify quartic surfaces $B\subset \pp^3$ for which there is an embedding $\mathbb{G}_m^3 \hookrightarrow \pp^3\setminus B$.
\end{problem} 

\section{Complexity, birational complexity, and the cluster type condition} 
\label{sec:complexity}

In this section, we review the concepts of complexity and birational complexity. Then we connect the complexity to toric geometry and cluster geometry.

\begin{definition}\label{def:comp} 
{\em 
Let $(X,\Delta)$ be a pair.
Write $\Delta=\sum_{i=1}^n a_iP_i$, where the $P_i$'s are the prime components of $\Delta$ and each $a_i\geq 0$.
The {\em complexity} of $(X,\Delta)$ is defined to be 
\[
c(X,\Delta):= \dim X + \dim {\rm Cl}_\qq(X) - |\Delta|,
\]
where $|\Delta|=\sum_{i=1}^n a_i$.
The {\em birational complexity} of $(X,\Delta)$ is defined to be 
\[
c_{\rm bir}(X,\Delta):=\inf \{  c(X',\Delta') \mid 
(X',\Delta')\simeq_{\rm cbir} (X,\Delta) \}.
\]
If $r$ is the Cartier index of $K_X+\Delta$, then 
$rc_{\rm bir}(X,\Delta)\in \zz$. 
In particular, for a Calabi--Yau pair of index one the birational complexity is an integer.
}
\end{definition}

A similar definition can be given for singularities of pairs.

\begin{definition}
{\em 
Let $(X,\Delta;x)$ be a singularity of pairs, i.e., $x\in X$ is a closed point and $\Delta$ is an effective divisor passing through $x\in X$. We define the {\em local complexity} of $(X,\Delta)$ at $x$ as 
\[
c(X,\Delta;x):= \dim X + \dim {\rm Cl}_\qq(X_x) - |\Delta|_x, 
\]
where $X_x$ is the spectrum of the localization of $X$ at $x$ and $|\Delta|_x$ is the sum of the coefficients of the components of $\Delta$ passing through $x\in X$.
}
\end{definition} 

\begin{example}
{\em 
Let $(T,B_T)$ be a toric pair where $B_T$ is the reduced torus invariant divisor. 
Let $\Sigma$ be the fan associated to $T$.
Then, we have $|B_T|=|\Sigma(1)|$. 
On the other hand, we have $\dim {\rm Cl}_\qq(X)=|\Sigma(1)|-n$ (see, e.g.,~\cite[Theorem 4.1.3]{CLS11}).
We conclude that 
\[
c(T,B_T) = \dim T + \dim {\rm Cl}_\qq(T) - |B_T|=0.
\]
A similar computation shows that $c(T,B_T;t)=0$ whenever $t\in T$ is a torus invariant point of a toric variety
and $B_T$ is the reduced sum of all the torus invariant prime divisors.
}
\end{example} 

Thus, toric varieties and singularities have complexity zero when we equip them with their torus invariant divisor. 
The following theorem, proved by Brown, M\textsuperscript{c}Kernan, Svaldi, and Zong, gives the converse statement for complete varieties (see, e.g.,~\cite[Theorem 1.2]{BMSZ18}).

\begin{theorem}\label{thm:charact-toric}
Let $(X,B)$ be a Calabi--Yau pair. 
Then, we have $c(X,B)\geq 0$. 
Furthermore, if $c(X,B)<1$, then the variety $X$ is toric
and the divisor $\lfloor B\rfloor$ is torus invariant. 
\end{theorem}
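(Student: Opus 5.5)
The plan follows the strategy of Brown, M\textsuperscript{c}Kernan, Svaldi and Zong: an induction on $\dim X$ that runs the MMP and uses adjunction to a component of $\lfloor B\rfloor$, closed off by a toric criterion.

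\textbf{Reductions.} First I reduce to the case where $X$ is $\qq$-factorial. Take a small $\qq$-factorialization $X'\rightarrow X$ (via a dlt modification followed by contracting all extracted divisors). The rank $\dim {\rm Cl}_\qq$ is unchanged, the coefficient sum $|B|$ is unchanged, and the pullback pair is still Calabi--Yau. Next I pass to a rational boundary by Diophantine approximation of $B$, keeping $K_X+B\equiv 0$, which does not increase complexity much.

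\textbf{Inequality $c(X,B)\geq 0$.} I induct on $\dim X$. Choose a component $S$ of $B$ with positive coefficient. If $\rho(X)\geq 2$, I run a $(K_X+B-aS)$-MMP, equivalently a $(-S)$-MMP. It ends either in a divisorial contraction or flip, which drops the Picard rank at the cost of at most one boundary component, or in a Mori fibre space $X\rightarrow Z$.
\begin{itemize}
\item In the fibre-space case I compare complexities via the canonical bundle formula. The components of $B$ that are horizontal over $Z$ contribute at most $\dim X-\dim Z+\rho(X/Z)$ to $|B|$, since on the general fibre (a Fano of Picard rank one relative to $Z$) the degree relation $\sum a_i\deg P_i=\deg(-K)$ limits them. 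The vertical components are controlled by the complexity of the induced generalized pair on $Z$.
\item Base case $\rho=1$: here $-K_X\equiv B$. I need the inequality $\sum a_i\leq n+1$, proved by induction via adjunction to a component $S$ with $a_S=1$ after perturbing. Equivalently, I use the standard fact that a Picard-rank-one Fano with $n+2$ boundary components cannot be Calabi--Yau lc.
\end{itemize}

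\textbf{Toric characterization when $c<1$.} I again run a suitable MMP, so that the complexity is preserved up to $<1$ and every step is forced to be torus-compatible. Doing adjunction to a component $S\subset\lfloor B\rfloor$, the restricted pair $(S,B_S)$ again has complexity $<1$. This needs the inequality $\dim{\rm Cl}_\qq(S)\leq\dim{\rm Cl}_\qq(X)$ restricted appropriately, plus the count of components of $B|_S$. By induction $S$ is toric with $\lfloor B_S\rfloor$ invariant.

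To conclude, I invoke the criterion that a $\qq$-factorial variety with $n+\rho$ prime divisors whose classes generate ${\rm Cl}_\qq$, and which form an lc Calabi--Yau boundary with coefficient one, is toric. The Cox ring is then a polynomial ring, by Hu--Keel and a Mori dream space argument, since $X$ is of Fano type. The coefficients must round up: $c<1$ and integrality force enough components with coefficient one.

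\textbf{Main obstacle.} The hardest step is this last one: showing that the Cox ring is polynomial, and that the fractional coefficients can be rounded up to $1$ without breaking log canonicity. I would first try the Fano-type reduction, using the fact that $-(K_X+\lfloor B\rfloor)$ is effective with small coefficient sum.
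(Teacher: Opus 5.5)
The paper does not prove this statement; it quotes \cite[Theorem 1.2]{BMSZ18}. Measured against that result, your outline has genuine gaps, and the first is structural: your induction does not close for the invariant as stated. The MMP bookkeeping is fine, since divisorial contractions do not increase $c$ and flips preserve it. The adjunction step is where it breaks. There you need $\dim{\rm Cl}_\qq(S)\le\dim{\rm Cl}_\qq(X)$, which is false: a smooth quadric $S\subset\pp^3$ has $\rho(S)=2$. In fact the paper's complexity really can increase under adjunction. For a smooth quartic $S\subset\pp^3$ of Picard rank $20$ one has $c(\pp^3,S)=3$ but $c(S,0)=22$. In the fibre-space step, the canonical bundle formula gives a generalized pair $(Z,B_Z+M_Z)$, not a Calabi--Yau pair. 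Your inductive hypothesis therefore does not apply to the base, and you neither state nor prove a generalized-pair version. The cited theorem is formulated more generally, and this is what makes adjunction usable. It assumes $(X,\Delta)$ lc with $-(K_X+\Delta)$ only nef. It measures complexity for a decomposition $\sum a_iS_i\le\Delta$ into effective, not necessarily prime, Weil divisors, with ${\rm Cl}_\qq(X)$ replaced by the span of the $S_i$. On a $\qq$-factorial $X$, restriction to $S$ is linear, so this span cannot grow, and the different only enlarges the boundary.

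The second problem is that every step carrying real content is only asserted, and the heuristics you offer ignore log canonicity, which is essential.

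\emph{Degree counting.} On a (relative) Picard-rank-one fibre it cannot give $|B_{\rm hor}|\le\dim X-\dim Z+1$ or $|B|\le n+1$. On $\pp(1,1,2)$, four rulings give $K+B\sim 0$ with $|B|=4>3$. This pair is excluded only because its log discrepancy along the curve over the vertex is $-1$. Moreover, the general fibre of a Mori fibre space need not have Picard rank one, and horizontal primes may split on it.

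\emph{Your ``standard fact''.} It is false for non-reduced boundaries: six general lines in $\pp^2$ with coefficient $\frac12$ form an lc Calabi--Yau pair with six components. In the reduced case it is weaker than the coefficient bound you need. Perturbing to create a coefficient-one component is exactly where a non-divisorial lc center may appear first.

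\emph{Rounding up.} The same pair has $c=0$ and $\lfloor B\rfloor=0$, which refutes ``$c<1$ and integrality force components with coefficient one'' (compare Example~\ref{ex:p1-non-toric}). One must construct a new toric boundary $D\ge\lfloor B\rfloor$; in BMSZ all but $\lfloor 2c\rfloor$ of its components are among the $S_i$. Likewise, ``every MMP step is torus-compatible'' needs proof. A contracted component of coefficient $b<1$ must be shown to be a toric valuation for a suitably chosen toric structure downstairs.

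\emph{Closing criterion.} This is the reduced case of the theorem itself. Hu--Keel only turns a polynomial Cox ring into toricity. Neither Fano type nor generation of ${\rm Cox}(X)$ by the $n+\rho$ sections is justified.

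\emph{Minor point.} lc pairs need not have small $\qq$-factorializations: a non-$\qq$-factorial lc surface point admits no small modification. You can instead stay on a $\qq$-factorial dlt modification. Each extracted divisor raises both $\dim{\rm Cl}_\qq$ and $|B|$ by one, so $c$ is unchanged, and toricity descends by Blanchard's lemma.
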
 

In~\cite{MS21}, Svaldi and the author prove a similar statement for singularities, i.e., a singularity of complexity zero is formally a toric singularity.
This gives a characterization of toric singularities
among log canonical singularities using the language of complexity.
In~\cite{MS21}, the authors also introduce the concept of relative complexity, for morphisms with connected fibers, and show that the lower bound zero is only attained by toric morphisms.

Note that we cannot conclude that $B$ is torus invariant in the setting of Theorem~\ref{thm:charact-toric} when $c(X,B)<1$, even if we assume $c(X,B)=0$. This is shown in the following example.

\begin{example}\label{ex:p1-non-toric}
{\em 
Let $p_1,\dots,p_{2k} \in \pp^1$ be different points.
Consider the pair $(\pp^1,\sum_{i=1}^{2k}\frac{1}{k}p_i)$. 
Then, we have $c(\pp^1,\sum_{i=1}^{2k}\frac{1}{k}p_i)=0$,
however, the sum of $2k$ points with coefficients $\frac{1}{k}$ is not a torus invariant divisor proivided $k\geq 2$. Indeed, for $k\geq 2$, the automorphism group of the pair
$(\pp^1,\sum_{i=1}^{2k}\frac{1}{k}p_i)$ is finite. 
Nevertheless, we can rewrite our pair as 
\[
\left( \pp^1, \sum_{i=1}^{k} \frac{1}{k} \left( 
p_{2i-1} + p_{2i}
\right)\right) 
\]
By doing so, we can think of our boundary as a weighted combination of toric boundaries, where the sum of the weights equals one. 
}
\end{example} 

The following theorem, due to Enwright and Figueroa, shows that the phenomenon from the previous example holds in general for Calabi--Yau pairs of complexity zero, independent of the index of such a pair. 

\begin{theorem}\label{thm:comp-zero}
Let $(X,B)$ be a Calabi--Yau pair of dimension $n$ and complexity zero. 
Then, there exists: 
\begin{enumerate} 
\item boundary divisors $B_1,\dots,B_k$ 
with $(X,B_i)$ a toric Calabi--Yau pair for each $i\in \{1,\dots,k\}$; and 
\item a partition of unity $1=\sum_{i=1}^k \mu_i$,
\end{enumerate}
such that $B=\sum_{i=1}^k \mu_i B_i$.
\end{theorem}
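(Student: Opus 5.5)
By Theorem~\ref{thm:charact-toric}, since $c(X,B)=0<1$, the variety $X$ is toric. Fix a torus action with fan $\Sigma$, and let $D_1,\dots,D_m$ be the torus invariant prime divisors, where $m=|\Sigma(1)|$. The plan is to reduce the statement to a problem in convex geometry.

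Write $B=\sum_j b_jP_j$ and set $\rho=\dim{\rm Cl}_\qq(X)=m-n$. Complexity zero means $\sum_j b_j = n+\rho = m$. The key observation is that the class map ${\rm Cl}_\qq(X)$ remembers enough to compare $B$ with $\sum D_i$. First, since $K_X+B\sim_\qq 0$, we have $[B]=[\sum_i D_i]$ in ${\rm Cl}_\qq(X)$. Next, pick a generic ample class, or better an ample $\qq$-divisor $A$ on the projective toric $X$ (projectivity follows from the proof in~\cite{BMSZ18}; otherwise pass to a projective toric model first). Every effective prime divisor $P$ on a toric variety is linearly equivalent to an effective torus invariant divisor, which we write $P\sim \sum_i c_i(P)D_i$ with $c_i\ge 0$.

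The core step is to show that each prime $P_j$ with $b_j>0$ satisfies the following. If $P_j$ is not torus invariant, then among all effective torus invariant representatives, $P_j$ ``costs'' at least one unit in the complexity count, i.e. replacing $P_j$ by a torus invariant representative increases the total coefficient sum by at least $b_j$ times (number of components $-1$). Since complexity zero is the minimal value by Theorem~\ref{thm:charact-toric}, this forces a rigidity. Concretely, I would prove the following: in the decomposition $\sum_j b_j [P_j]=\sum_i[D_i]$, consider the polytope $Q$ of all vectors $(\lambda_{j,\ell})$ expressing $B$ as a combination with nonnegative weights of divisors of the form $\sum_i D_i - {\rm div}(\chi^u)$, which are toric boundaries for other torus structures obtained by translating via characters. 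A cleaner route is the following. The linear system $|\sum_i D_i|=|-K_X|$ is the polytope $P_{-K}$, and a divisor ${\rm div}(f)+\sum D_i$ is a toric boundary for some (conjugated) torus if $f$ is, up to automorphism, a monomial shift. I would use the equality $|B|=\dim X+\rho$ together with the inequality argument of~\cite{BMSZ18}, which shows $|B|\le n+\rho$ with a control on the components, to show that the components of $B$ are organized into $\rho+n$ ``slots'', each slot being a torus-invariant divisor for some torus. This yields a decomposition $B=\sum\mu_iB_i$, obtained by a greedy/linear programming argument. At each stage, take the smallest remaining coefficient, peel off a full toric boundary $B_1$ with weight $\mu_1$ equal to that coefficient, and observe that $(X,B-\mu_1B_1)/(1-\mu_1)$ is again a Calabi--Yau pair of complexity zero with fewer components. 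Induction on the number of components then finishes the argument.

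The main obstacle is the peeling step: showing that, given any complexity zero Calabi--Yau pair, there is a toric boundary $B_1$ (for some torus structure, possibly non-standard, using ${\rm Aut}(X)$ as in Example~\ref{ex:p1-non-toric}) with ${\rm Supp}(B_1)\subseteq{\rm Supp}(B)$. I would attack this by running the proof of Theorem~\ref{thm:charact-toric} in its equality case. That proof constructs a torus by taking $n+\rho$ components, or combinations of them, generating ${\rm Cl}$ and shows they form a toric boundary. I would track that the chosen components can be taken from ${\rm Supp}(B)$ with the smallest coefficient, using that the lc condition and the equality $|B|=n+\rho$ leave no slack.
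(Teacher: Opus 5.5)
Your peeling step is false as stated, and the obstacle you single out is not the one that matters. Log discrepancies are affine in the boundary. If $B=(1-\mu)B'+\mu B_1$, then $a_E(X,B)=(1-\mu)a_E(X,B')+\mu\, a_E(X,B_1)$ for every divisor $E$ over $X$. So $(X,B')$ is lc only if two conditions hold: every lc place of $(X,B)$, in particular every component of $\lfloor B\rfloor$, must be an lc place of $(X,B_1)$; and $\mu\le a_E(X,B)/a_E(X,B_1)$ whenever $a_E(X,B_1)>0$. The condition ${\rm Supp}(B_1)\subseteq{\rm Supp}(B)$ is therefore not enough, and $\mu$ cannot be taken to be the smallest coefficient.

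Both failures already occur on $\mathbb{P}^1$. For $B=\tfrac{1}{2}p_1+\tfrac{1}{2}p_2+p_3$ and $B_1=p_1+p_2$, the remainder has coefficient $1/(1-\mu)>1$ at $p_3$ for every $\mu>0$. For $B=\tfrac{2}{3}(p_1+p_2+p_3)$, every toric boundary in the support is some $p_a+p_b$, and $\mu=\tfrac{2}{3}$ leaves $2p_c$. The correct first step there is $\mu=\tfrac{1}{3}$, which gives $\tfrac{1}{2}p_a+\tfrac{1}{2}p_b+p_c$. This has as many components as $B$, so induction on the number of components also fails.

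Your intermediate construction is also wrong. The divisor $\sum_iD_i-{\rm div}(\chi^u)=\sum_i(1-\langle u,v_i\rangle)D_i$ is invariant under the same torus, and it is a toric boundary only for $u=0$. Other torus structures arise by conjugating with ${\rm Aut}(X)$, not by characters.

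What is missing is a toric $B_1$ compatible with all lc places, together with the right bookkeeping.

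\begin{enumerate}
\item \emph{The polytope.} Fix a log resolution of $(X,{\rm Supp}\,B)$. Let $F$ be the set of $\Gamma$ supported on ${\rm Supp}\,B$ with $(X,\Gamma)$ lc, $K_X+\Gamma\equiv 0$ and $c(X,\Gamma)=0$. It is cut out by finitely many linear conditions on that resolution. Since $c\ge 0$ by Theorem~\ref{thm:charact-toric}, it is a face of the polytope of lc Calabi--Yau boundaries supported on ${\rm Supp}\,B$, and it contains $B$.
\item \emph{A compatible toric boundary.} Given $\Gamma\in F$, you do not need to re-run the proof of \cite{BMSZ18}. Take a dlt modification $f\colon (Y,\Gamma_Y)\to (X,\Gamma)$ extracting the lc places on the fixed resolution. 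Complexity stays zero, because each extracted divisor raises both $\dim{\rm Cl}_\qq$ and $|\Gamma_Y|$ by one; exceptional divisors are independent in ${\rm Cl}_\qq(Y)$ since $X$ is projective. Apply the full form of \cite[Theorem 1.2]{BMSZ18} to the prime decomposition of $\Gamma_Y$, which then has complexity $0$. It gives a toric boundary $D_Y\ge\lfloor\Gamma_Y\rfloor$ all of whose components are components of $\Gamma_Y$.
\item \emph{Descending to $X$.} The torus descends to $X$ by Blanchard's lemma. Then $B_1:=f_*D_Y$ is a reduced toric boundary in $F$ with $f^*(K_X+B_1)=K_Y+D_Y$. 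Hence $B_1$ satisfies with equality every defining inequality of $F$ that is tight at $\Gamma$.
\item \emph{Conclusion.} It follows that $\Gamma=(1-t)\Gamma'+tB_1$ with $\Gamma'\in F$ for small $t>0$. So every vertex of $F$ equals its own $B_1$, i.e.\ is a toric boundary, and $B$ is a convex combination of vertices of $F$.
\end{enumerate}

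In your peeling language: choose $B_1$ this way, take $\mu$ maximal subject to the remainder staying in $F$, and induct on the dimension of the smallest face of $F$ containing $B$.
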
 

The previous theorem gives a complete understanding of Calabi--Yau pairs of complexity zero. Some similar behaviour is expected for pairs with complexity less than one. 
In the case of complexity one, in~\cite[Section 7]{BMSZ18}, the authors give an example which is not rational; however, it becomes rational after a $2$-to-$1$ cover. 
An overarching generalization of this example was proved by Enwright, Li, and Y\'a\~nez in~\cite{ELY25}. One of the main results of their work is the following.

\begin{theorem}\label{thm:ely25}
Let $(X,\Delta)$ be a Calabi--Yau pair of complexity one. 
Then, there exists a $2$-to-$1$ cover $Y\rightarrow X$, only ramified along $X^{\rm sing}$ and an element $B \in |-K_X|$ such that the pair $(X,B)$ is cluster type. 
\end{theorem}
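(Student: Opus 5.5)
The statement as printed is slightly garbled: the cover $Y\rightarrow X$ is introduced and then not used. I will read it as saying that the cover $Y$ carries a boundary $B_Y\in|-K_Y|$ with $(Y,B_Y)$ of cluster type. The plan is to follow the structure of the Brown--M\textsuperscript{c}Kernan--Svaldi--Zong argument behind Theorem~\ref{thm:charact-toric}. We run it one step further, treating the defect of complexity one as a single "missing" toric divisor that a double cover restores.

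\textbf{Step 1: reduce to a clean model.} Write $\Delta=\sum a_iP_i$ with $\dim X+\rho_\qq-\sum a_i=1$, where $\rho_\qq=\dim{\rm Cl}_\qq(X)$. First replace $(X,\Delta)$ by a $\qq$-factorial dlt modification. Complexity does not increase under extracting log canonical places: each extracted divisor adds one to $\rho_\qq$ and one to $|\Delta|$. We may therefore assume $X$ is $\qq$-factorial. Next run the MMP as in~\cite{BMSZ18}, contracting or fibering while tracking complexity, so that we can assume $X$ has Picard rank one. In that case there are $n+1$ "weighted hyperplane slots" to be filled by $\Delta$, and the total coefficient is $|\Delta|=n$.

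\textbf{Step 2: find the toric degeneration minus one divisor.} Using the decomposition technique of Theorem~\ref{thm:comp-zero}, group the components of $\Delta$ into $n$ divisor classes $D_1,\dots,D_n$. Each $D_j$ is a $\qq$-linear combination of components of $\Delta$ with coefficients summing to one. The $D_j$ cut out a toric-looking configuration, and $-K_X-\sum D_j$ is a big class $L$ of "degree one". The key claim is that $|2L|$, or more precisely a suitable multiple $|2(-K_X-\sum_j \Gamma_j)|$ with integral $\Gamma_j$ chosen from the supports, contains a divisor $R$ such that $\sum_j\Gamma_j+\frac12 R\sim_\qq -K_X$. The double cover branched along $R$ then turns $\frac12R$ into a reduced divisor $R_Y$ on $Y$. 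Since the cover must be ramified only along $X^{\rm sing}$, $R$ has to be chosen so that the cover is quasi-étale. Concretely, this comes from a $2$-torsion element of the local class groups: the index-two Weil divisor $L-\frac12(\ldots)$ that is not Cartier. I would construct $Y$ as the index-one cover ${\rm Spec}\bigoplus_{i=0}^1\mathcal O_X(-iM)$, where $M$ is this $2$-torsion Weil divisor class.

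\textbf{Step 3: verify cluster type on $Y$.} On $Y$, the pullback pair $(Y,B_Y)$ satisfies $K_Y+B_Y\sim0$, where $B_Y$ is the reduced preimage of the integral boundary. Its complexity is now zero in a local or birational sense: the extra component supplies the missing unit of coefficient. By Theorem~\ref{thm:charact-toric}, applied birationally, $(Y,B_Y)$ is log rational. To upgrade this to cluster type, I would show that the crepant map from the toric model contracts no divisor meeting the torus. Proposition~\ref{prop:from-lr-to-ct} only gives this after a crepant blow-up, so this point needs a direct argument. The map is built from a toric variety by blowing up strata inside the boundary, which contracts only boundary divisors, in the style of the surface argument leading to Proposition~\ref{prop:nodal-curve}.

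\textbf{Main obstacle.} The hardest step is Step 2, namely producing the $2$-to-$1$ cover that is quasi-étale. One has to show that the obstruction to complexity zero is exactly a $2$-torsion class in ${\rm Cl}(X)$ supported on the singular locus. The expected reason is that a complexity-one pair of index one, after fixing the $n$ toric slots, has a residual pair on a conic-bundle-like structure, $(\pp^1,\frac12(p_1+p_2+p_3+p_4))$, as in Example~\ref{ex:p1-non-toric}. That residual pair becomes toric after the double cover branched at those points. My first attempt would be to prove this via adjunction to a one-dimensional log canonical center.
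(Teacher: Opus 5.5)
The paper gives no argument for Theorem~\ref{thm:ely25}; it only quotes \cite{ELY25}. Judged on its own, your proposal has genuine gaps, and the central mechanism of Step~2 does not work as described.

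\textbf{Step 2 is internally inconsistent.} If the $D_j$ regroup all of $\Delta$, then $-K_X-\sum_j D_j\sim_\qq -K_X-\Delta\sim_\qq 0$, so there is no big class $L$ left over. A double cover branched along a divisor $R$ is ramified in codimension one, so it is not ``only ramified along $X^{\rm sing}$''. Moreover, Riemann--Hurwitz gives $\pi^*(K_X+\tfrac12R)=K_Y$, so $\tfrac12 R$ disappears rather than becoming a reduced divisor $R_Y$. Conversely, the index-one cover ${\rm Spec}\bigoplus_{i=0}^{1}\mathcal{O}_X(-iM)$ attached to a $2$-torsion class $M$ is \'etale in codimension one and preserves all coefficients. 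So it cannot turn $\tfrac12R$ into a reduced component either.

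\textbf{The premise behind Step 2 is false.} Complexity one is not ``complexity zero with one toric divisor missing, restored by a $2$-torsion class''. The pair $(\pp^2,Q+L)$ has complexity $2+1-2=1$. Yet $\pp^2$ has no non-trivial quasi-\'etale cover and ${\rm Cl}(\pp^2)$ is torsion free, while the pair is already cluster type. So the cover in the theorem must be allowed to be trivial, and your key claim cannot hold in general. Nothing is missing from the anticanonical class here; instead the conic $Q$ does the work of two coordinate lines. Write $Q=\{xy=z^2\}$, $L=\{z=0\}$ and $w=xy-z^2$. Then $\pp^2\cong\{xy=z^2+w\}\subset\pp(1,1,1,2)$ is a quadric in Cox coordinates, $Q+L$ is cut out by the torus-invariant divisors $\{w=0\}+\{z=0\}$, and the torus is $\{xzw\neq0\}$, obtained by eliminating $y=(w+z^2)/x$.

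\textbf{Step 3 skips the actual content of the theorem.} Even granting a suitable cover, log rationality is the weaker conclusion. The point is to produce a crepant map from $(\pp^n,H_0+\dots+H_n)$ that contracts no divisor meeting $\mathbb{G}_m^n$. Your appeal to ``blowing up strata inside the boundary'', modelled on the surface argument behind Proposition~\ref{prop:nodal-curve}, is not available in general. Log rational pairs need not be cluster type (Example~\ref{ex:ct-sing-surf}), so some feature specific to complexity one must produce the torus chart. The discussion after Theorem~\ref{thm:ely25} indicates how the cited work does this. It realizes $X$ as a hypersurface in a toric variety of dimension $\dim X+1$, given by a quadric in Cox coordinates, with $B$ cut out by torus-invariant divisors. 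A torus in $X\setminus B$ can then be read off from the equation, as in the example above.

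\textbf{Step 1 is also incomplete.} The MMP may end in a Mori fibre space over a positive-dimensional base rather than on a Picard rank one model. In addition, neither property transfers back automatically along the MMP. Reversing a divisorial contraction can extract a divisor $E$ with $a_E(X',B')\geq 2$ for the new boundary $B'$, so the crepant pullback is not effective. Likewise, the normalized pullback of a cover branched over singular points of the output can ramify along the divisors extracted over those points.
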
 

Theorem~\ref{thm:ely25} shows that cluster type pairs appear naturally from the perspective of the complexity.
In this hierarchy, cluster type pairs are close to toric pairs. Further, in~\cite{ELY25}, the authors prove that for such $X$ as in Theorem~\ref{thm:ely25}, there exists an embedding $X\hookrightarrow T$ into a toric variety with $\dim T =\dim X+1$, and $B$ is the restriction of a torus invariant divisor on $T$. More generally, it is known that whenever we have a Calabi--Yau pair $(X,B)$ with $c(X,B)<2$, then $X$ is a toric hypersurface. In the case of complexity one, this toric hypersurface is a quadric in Cox coordinates. 
This leads naturally to the following problem.

\begin{problem}\label{prob:possible-complexities}
Describe the possible complexities of Calabi--Yau pairs in the interval $[0,2]$. Describe how these varieties embed into toric varieties. 
\end{problem} 

Some examples of local complexities of surfaces can be found in~\cite[Example 4.12]{Mor22}.
We expect only finitely many values in the interval $[0,2]$ for Problem~\ref{prob:possible-complexities}.
In view of~\cite{MS21}, it is expected that we can fully understand singularities of complexity less than two. 

\begin{problem}\label{prob:sing-comp}
Let $(X,B;x)$ be a log canonical singularity with $c(X,B;x)<2$. Describe the local geometry of $X$ and $B$ near $x$.
\end{problem} 

As a consequence of Theorem~\ref{thm:ely25} and Theorem~\ref{thm:charact-toric}, we conclude the following statement about the birational complexity.

\begin{theorem}\label{thm:bir-comp}
Let $(X,B)$ be a Calabi--Yau pair of index one. 
If $c_{\rm bir}(X,B)\leq 1$, then $c_{\rm bir}(X,B)=0$ and $(X,B)$ is log rational.
\end{theorem}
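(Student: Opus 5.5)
The plan is to use two facts. For index one the birational complexity is an integer, so $c_{\rm bir}(X,B)\in\{0,1\}$. And crepant birational pairs share index and log discrepancies (Proposition~\ref{prop:crep-bir-prop}), so any model realizing the infimum is again an index one Calabi--Yau pair. I would first check that the infimum in the definition of $c_{\rm bir}$ is attained. Since $rc(X',\Delta')\in\zz$ with $r=1$ fixed across the crepant class, and $c(X',\Delta')\geq 0$ by Theorem~\ref{thm:charact-toric}, the values of the complexity form a set of non-negative integers. So the minimum exists, and we may choose a crepant birational model $(X',B')\simeq_{\rm cbir}(X,B)$ with $c(X',B')=c_{\rm bir}(X,B)\le 1$.

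\emph{Case $c(X',B')<1$, i.e. $c(X',B')=0$.} Theorem~\ref{thm:charact-toric} gives that $X'$ is toric and $\lfloor B'\rfloor$ is torus invariant. Since the index is one, $B'$ is a reduced integral divisor, so $B'=\lfloor B'\rfloor$. Then $B'$ is a torus invariant divisor with $K_{X'}+B'\sim 0$. Any torus invariant effective divisor in $|-K_{X'}|$ is linearly equivalent to the full toric boundary, but it need not equal it; Theorem~\ref{thm:comp-zero} handles this. Because the coefficients are integral, the convex combination $B'=\sum\mu_iB_i$ must be trivial, so $B'$ itself is a toric boundary. (Alternatively, complexity zero with reduced boundary forces the number of components to equal $\dim X'+\rho$, which pins down the full toric boundary.) Hence $(X',B')$ is a toric pair, which is crepant birational to $(\pp^n,H_0+\dots+H_n)$ by the discussion after Definition~\ref{def:log-rational}. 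So $(X,B)$ is log rational and $c_{\rm bir}(X,B)=0$.

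\emph{Case $c(X',B')=1$.} This case must be ruled out. Apply Theorem~\ref{thm:ely25} to $(X',B')$ to obtain an element $B''\in|-K_{X'}|$ with $(X',B'')$ cluster type, using the structure result of~\cite{ELY25} that $X'$ is a quadric hypersurface in a toric variety with $B'$ cut out by torus invariant divisors. The key step, and the main obstacle, is to show that in index one $B''$ can be taken equal to $B'$. I would try this first by using the explicit quadric description: the index one condition forces the $2$-to-$1$ cover to be trivial or étale in codimension one, so $(X',B')$ itself admits a toric model. Once $(X',B')$ is cluster type, it is crepant birational to $(\pp^n,\sum H_i)$ via a map contracting no divisors of the torus. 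Blowing up toric strata of $\pp^n$ then gives a toric model $(T,B_T)\simeq_{\rm cbir}(X,B)$ with $c(T,B_T)=0$. This contradicts the minimality $c_{\rm bir}=1$, so this case cannot occur, and the statement follows.
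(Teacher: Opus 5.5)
Your reduction to a model $(X',B')$ realizing $c_{\rm bir}(X,B)\in\{0,1\}$ is fine, and so is your case $c(X',B')=0$. The latter is simpler than you make it: $B'$ is a reduced torus invariant divisor, so $B'\le B_T$, and $B_T-B'$ is effective and linearly equivalent to zero on a complete variety, hence $B'=B_T$. Theorem~\ref{thm:comp-zero} is not needed.

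The genuine gap is the case $c(X',B')=1$. This is where all the content of the statement lies, and you leave it open. You need that the pair $(X',B')$ \emph{with its given boundary} $B'$ is crepant birational to a toric pair. Neither output of Theorem~\ref{thm:ely25}, in the form you use it, gives this.
\begin{itemize}
\item Knowing that $(X',B'')$ is cluster type for some other $B''\in|-K_{X'}|$ says nothing about $(X',B')$, because log rationality is a property of the pair, not of the variety. For instance, $\pp^2$ is of cluster type, yet $(\pp^2,C)$ with $C$ a smooth cubic has coregularity one and is not log rational.
\item A cluster type (or rational) double cover does not descend either. The non-rational complexity one example of~\cite[Section 7]{BMSZ18} becomes rational after a $2$-to-$1$ cover.
\end{itemize}
Your proposed bridge (``index one forces the double cover to be trivial or \'etale in codimension one, so $(X',B')$ itself admits a toric model'') is not an argument. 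The cover in Theorem~\ref{thm:ely25} is already unramified in codimension one. You give no reason why $K_{X'}+B'\sim 0$ would make it trivial. And even a quasi-\'etale cluster type cover would not by itself make the quotient pair log rational.

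The paper obtains the theorem as a consequence of Theorem~\ref{thm:ely25} and Theorem~\ref{thm:charact-toric}. The role of~\cite{ELY25} there is precisely to supply, in the complexity one case, that the index one pair itself is crepant birational to a toric pair. With that input your argument goes through directly. No contradiction is needed: it gives $c_{\rm bir}(X,B)=0$ and log rationality at once. Without it, your proposal only proves the easy implication from $c_{\rm bir}(X,B)<1$ to log rationality.
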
 

The example due to Kaloghiros in~\cite{Ka20} shows that there are threefold Calabi--Yau pairs $(X,B)$ with $c_{\rm bir}(X,B)=2$. It is not clear which values of the birational complexity can be achieved. In~\cite{MM24}, Mauri and the author show that $c_{\rm bir}(X,B)<2\dim X$ whenever $(X,B)$ is Calabi--Yau and $X$ is a Fano type variety. 
More generally, the following theorem is proved.

\begin{theorem}\label{thm:bircomp-vs-coreg}
Let $X$ be a Fano type variety and $(X,B)$ be a Calabi--Yau pair. Then, we have $c_{\rm bir}(X,B)\leq \dim X + {\rm coreg}(X,B)$.
\end{theorem}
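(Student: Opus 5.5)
We aim to find a crepant birational model $(X',B')$ of $(X,B)$ with $c(X',B')\leq \dim X+{\rm coreg}(X,B)$. Set $n=\dim X$ and $c={\rm coreg}(X,B)$. The idea is to extract as many log canonical places of $(X,B)$ as possible, each with coefficient one, while keeping the Picard rank under control. Then we run an MMP to a Mori fibre space, or a model of small Picard rank, so that $\dim {\rm Cl}_\qq$ is small while $|B'|$ stays large.

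\emph{Step 1: a dlt model with a maximal-dimensional dual complex.} Take a $\qq$-factorial dlt modification $(Y,B_Y)\rightarrow (X,B)$ realizing the regularity. Then $\mathcal{D}(Y,B_Y)$ has dimension $n-c-1$, so $\lfloor B_Y\rfloor$ contains $n-c$ components $S_1,\dots,S_{n-c}$ meeting along a $c$-dimensional stratum $W$. Since $X$ is Fano type and $(Y,B_Y)$ is crepant to $(X,B)$ with $-(K_Y+B_Y)\equiv 0$, the variety $Y$ is again of Fano type after perturbing. Hence any MMP on $Y$ terminates, and we may freely run MMPs for divisors on $Y$.

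\emph{Step 2: reduce the Picard rank while keeping the $S_i$.} Run a $(K_Y+B_Y-\epsilon\sum_i S_i)$-MMP, or more precisely an MMP for $-\sum_i S_i$; since $K_Y+B_Y\equiv 0$, every step is crepant for $(Y,B_Y)$. The divisors contracted are not among the $S_i$, because each $S_i$ is positive on the curves we contract. We then continue inductively. Restrict to the stratum and use adjunction: $(S_1,B_{S_1})$ is a Calabi--Yau pair of Fano type with coregularity $c$ in dimension $n-1$. By induction on dimension we control ${\rm Cl}_\qq$ relative to the boundary. The goal is an end model $(X',B')$ together with a tower of Mori fibre spaces $X'=X_0\rightarrow X_1\rightarrow\cdots$ in which the components $S_i$ are horizontal/vertical in a controlled way. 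The mechanism we aim for is: each Mori fibre contraction increases the Picard number by one and contributes at least one boundary component of coefficient one, except in a fibration with non-toric base, which costs at most $2$ per dimension. A cleaner route would compare with Theorem~\ref{thm:charact-toric}: along $W$ the local structure is toric in $n-c$ directions, contributing complexity $0$ from those directions. The remaining $c$ directions contribute at most $2$ each, since $\rho$ grows by one and $|B|$ grows by at least $0$ per dimension. This gives $c(X',B')\leq (n-c)\cdot 1+2c=n+c$.

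\emph{Main obstacle.} The hard step is the bookkeeping in Step 2. We must show that the MMP/Mori fibre space tower can be arranged so that, in each of the $n-c$ ``toric'' directions, adding one to $\dim{\rm Cl}_\qq$ is compensated by at least one new coefficient-one component, while in each of the remaining $c$ directions at most two units of complexity are lost. For this I would first treat $c=0$, aiming for $c_{\rm bir}\leq n$, by running the MMP so that the end result is a Mori fibre space $X'\rightarrow Z$ with $\rho(X'/Z)=1$ and a horizontal lc center. I would then induct via adjunction to a horizontal component of $\lfloor B'\rfloor$, using that complexity behaves subadditively under adjunction and under Mori fibre spaces (each Mori fibre space of relative dimension $d$ changes $n+\rho-|B|$ by at most $d+1$, with gain compensated by the horizontal component). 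The other ingredients, namely the existence of dlt modifications and termination for Fano type varieties (\cite{BCHM10}), are standard.
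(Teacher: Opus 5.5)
Your proposal has a genuine gap. The paper gives no proof to compare routes with, since it only quotes this bound from \cite{MM24}; judged on its own terms, the essential step is missing, and one concrete claim in Step~2 is false.

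\textbf{The false claim in Step~2.} You assert that a $(-\sum_i S_i)$-MMP never contracts an $S_j$. A $(-\sum_i S_i)$-negative extremal ray $R$ only satisfies $(\sum_i S_i)\cdot R>0$, which is compatible with $S_j\cdot R<0$. So $S_j$ can be the exceptional divisor of a divisorial step. For example, let $C\subset\mathbb{P}^2$ be a nodal cubic with node $p$, and take $Y=\mathrm{Bl}_p\mathbb{P}^2$ with $B_Y=E+C'$, where $C'\sim 3H-2E$. This is a $\mathbb{Q}$-factorial dlt modification of coregularity zero, with $S_1=E$ and $S_2=C'$ meeting in two points. Here $-(S_1+S_2)\sim K_Y$ and $(E+C')\cdot E=-1+2=1>0$, so the MMP may contract $S_1=E$ and return to $(\mathbb{P}^2,C)$. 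That destroys both zero-dimensional strata and leaves a single boundary component. Your claim does hold for a single divisor $S$, which survives as a relatively ample divisor over the base of the resulting Mori fibre space. But then you control only one component, and not the stratum $W$.

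\textbf{The main gap.} The inequality $\dim\mathrm{Cl}_{\mathbb{Q}}(X')-|B'|\le c$ for some crepant model is the entire theorem, and it is never established. The rest of Step~2 describes what you hope happens.

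Made precise, your per-step count gives a sufficient condition. Suppose a $\mathbb{Q}$-factorial crepant model $X'$ carries a tower of Mori fibre spaces $X'=X_0\to X_1\to\dots\to X_k=\mathrm{pt}$. Let $b$ be the number of steps $X_i\to X_{i+1}$ for which no component of $\lfloor B'\rfloor$ has image in $X_i$ a divisor dominating $X_{i+1}$. Then $c(X',B')\le n+b$. So what you would need is such a tower with $b\le c$. The tools you invoke do not produce it:
\begin{itemize}
\item Adjunction to $S_1$ runs the wrong way. A low-complexity model of $(S_1,B_{S_1})$ does not lift to a model of $X$, and $\mathrm{Cl}_{\mathbb{Q}}(X')$ is not controlled by $\mathrm{Cl}_{\mathbb{Q}}(S')$.
\item Complexity is not subadditive along a Mori fibre space $X'\to Z$. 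The base carries a generalized pair from the canonical bundle formula. Its coefficients are at least, and can exceed, those of the vertical part of $B'$ (multiple fibres, branching of horizontal components).
\item The lc centres of that base pair can come from non-divisorial lc centres upstairs. In the example above, the tower $Y\to\mathbb{P}^1\to\mathrm{pt}$ has $b=1>0=c$. The two points of $E\cap C'$ give coefficient-one points on the base but no boundary divisor on $Y$.
\end{itemize}
Repairing this needs extraction of lc places, elementary transformations, lifting base MMPs through two-ray games, and an argument that coregularity $c$ forces at most $c$ bad steps. None of this is supplied.

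\textbf{The ``cleaner route'' is not an argument.} The local toric structure at the generic point of $W$ says nothing about the global rank of $\mathrm{Cl}_{\mathbb{Q}}(X')$. Read literally (toric directions cost $0$), your heuristic would give $c_{\rm bir}\le 2c$. Every index-one, coregularity-zero pair on a Fano type variety would then be log rational by Theorem~\ref{thm:bir-comp}. That contradicts Kaloghiros's birationally rigid example \cite{Ka20}. Your displayed sum $(n-c)\cdot 1+2c$ in fact charges $1$ per toric direction, not $0$.
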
 

In particular, in the context of Theorem~\ref{thm:bircomp-vs-coreg}, if the coregularity of the pair is zero, then the birational complexity is at most the dimension of $X$. 
The main result of~\cite{MM24} is about dual complexes of Calabi--Yau pairs. 

\begin{theorem}\label{thm:bircomp-vs-comp}
Let $(X,B)$ be a Calabi--Yau pair with $c_{\rm bir}(X,B)<\dim X$. If $\mathcal{D}(X,B)$ is a PL-manifold, then it is either a PL-disk or a PL-sphere. 
\end{theorem}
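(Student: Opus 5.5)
Since $c_{\rm bir}(X,B)<\dim X$, I would begin by fixing a crepant birational model $(X',B')\simeq_{\rm cbir}(X,B)$ with $c(X',B')<\dim X$. By Proposition~\ref{prop:crep-bir-prop}.(4) the dual complex is a crepant birational invariant up to simple homotopy equivalence, and PL-spheres and PL-disks are recognized within such classes once we know the dual complex is a PL-manifold. So it suffices to work with $(X',B')$. I would pass to a $\qq$-factorial dlt modification $(Y,B_Y)\rightarrow (X',B')$. It only extracts log canonical places, so each new divisor has coefficient one in $B_Y$. Each such divisor raises both $\dim{\rm Cl}_\qq$ and $|B_Y|$ by one, hence $c(Y,B_Y)=c(X',B')<\dim X$. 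From now on I work with $(Y,B_Y)$ dlt and $\qq$-factorial.

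The complexity inequality gives $|B_Y|>\rho(Y)$, where $\rho(Y)=\dim{\rm Cl}_\qq(Y)$. I would use this in two ways. First, $\lfloor B_Y\rfloor$ is nonempty, since otherwise the coefficients are too small relative to $\rho$; here one uses the standard argument from~\cite{BMSZ18} that in Picard rank counting the components of $B_Y$ satisfy enough linear relations. Second, I would run a $(K_Y+B_Y-\epsilon S)$-MMP, where $S$ is a suitable component of $\lfloor B_Y\rfloor$, or a combination of components. Since $K_Y+B_Y\sim_\qq 0$, this is a $-S$-MMP. Along the MMP the complexity does not increase, and the dual complex changes only by collapses: components of $\lfloor B\rfloor$ that get contracted are exactly what deletes vertices. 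The MMP ends with a Mori fiber space $Y'\rightarrow Z$ on which $S$ is relatively ample. The restriction of the pair to a general fiber, and the adjunction to strata, again have complexity $<$ dimension by the inequality $|B|>\rho$.

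The key step is an induction on dimension. I would show that either some vertex link of $\mathcal{D}(Y,B_Y)$ is the dual complex of a lower-dimensional Calabi--Yau pair of complexity less than its dimension, obtained by adjunction $(S,B_S)$ with $K_S+B_S=(K_Y+B_Y)|_S$ and $c(S,B_S)\leq c(Y,B_Y)-1$, or the Mori fiber structure exhibits $\mathcal{D}$ as a cone or suspension of the dual complex of the pair induced on the base $Z$. By induction, links are spheres or disks, and so are suspensions and cones of such. Since $\mathcal{D}$ is a PL-manifold by hypothesis, a PL-manifold which is a cone or a suspension over a sphere or disk is a PL-disk or PL-sphere. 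Alternatively, after collapses along the $-S$-MMP it deformation retracts onto such a space; by simple homotopy invariance and a recognition argument, this forces the same conclusion. The base case is dimension one: $c<1$ makes $(\pp^1,B)$ have $\lfloor B\rfloor$ equal to one or two points, so $\mathcal{D}$ is a point (a disk) or $S^0$.

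I expect the main obstacle to be controlling how the complexity behaves under adjunction to a component $S\subset\lfloor B_Y\rfloor$ and under passing to the base of the Mori fiber space. The needed estimate is $\rho(S)\le \rho(Y)-1+(\text{correction})$ together with the transfer of the coefficients of other components meeting $S$. I would attempt this via the surjectivity-type estimate in~\cite{BMSZ18} (restricting ${\rm Cl}_\qq$ to fibers and components), accepting a loss absorbed by the strict inequality $c<\dim X$.
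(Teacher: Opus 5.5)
Your proposal has a genuine gap. Nothing in it turns $c<\dim X$ into a \emph{global} structure on $\mathcal{D}$, and both branches of your dichotomy break down.

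\textbf{The vertex-link branch.} This branch carries no information even when it applies. If $\mathcal{D}$ is a PL-manifold, its links are already PL-spheres or PL-disks by hypothesis, and spherical links do not distinguish $S^2$ from a torus or from $\mathbb{RP}^2$. The estimate you want there, $c(S,B_S)\le c(Y,B_Y)-1$, is moreover false. Adjunction of a toric pair to an invariant divisor gives a toric pair, so complexity $0$ is preserved; it cannot drop, since $c\geq 0$ by Theorem~\ref{thm:charact-toric}. Restricting to $S$ gives no control of $\dim{\rm Cl}_\qq(S)$.

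\textbf{A test case.} Take $(\pp^3,Q+H_1+H_2)$ with $Q$ a smooth quadric and $H_1,H_2$ general planes. This pair is snc with $c=3+1-3=1$. The adjoint pairs are $(Q,(H_1+H_2)|_Q)$, with $c=2+2-2=2=\dim Q$, and $(H_i,\text{conic}+\text{line})$, with $c=1$. Since $\rho(\pp^3)=1$, your $-S$-MMP is just $\pp^3\to{\rm pt}$, which exhibits no cone or suspension. So your argument never reaches $\mathcal{D}\cong S^2$ here.

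What does work in this example is the relation $H_1\sim H_2$. Blow up the lc centre $H_1\cap H_2$, which is a crepant extraction of an lc place. This gives a $\pp^2$-bundle over $(\pp^1,\{0\}+\{\infty\})$ with $H_1,H_2$ as the fibres over $0,\infty$. It exhibits $\mathcal{D}$ as the suspension of $\mathcal{D}(\pp^2,\text{conic}+\text{line})\cong S^1$.

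Some mechanism of this kind is what is missing: using $|B|>\rho$ to produce a crepant model on which $\mathcal{D}$ is PL-homeomorphic to a cone or to an untwisted suspension $\Sigma K$. Once you have that, no induction is needed. The link of the apex, or of a suspension point, is $K$, and the PL-manifold hypothesis forces $K$ to be a PL-sphere or PL-disk.

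\textbf{Twisted suspensions.} The complexity hypothesis must be used precisely to exclude twisted suspensions. Your sentence that fibres and strata ``again have complexity $<$ dimension by $|B|>\rho$'' does not do this. A $\lfloor B\rfloor$-positive Mori fibre space can have, as horizontal part of $\lfloor B\rfloor$, a single component of degree two over the base. Then $\mathcal{D}$ is only a $\zz/2$-quotient of a suspension. For example, quotient $((\pp^1)^3,\text{toric boundary})$ by $(x,y,z)\mapsto(x^{-1},y^{-1},z^{-1})$, which acts freely near the boundary. The result is a Calabi--Yau threefold pair, fibred in exactly this way via the projection to the last two factors. Its dual complex is $\mathcal{D}\cong\mathbb{RP}^2$, a closed PL-manifold that is neither a sphere nor a disk. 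Consistently, its complexity is $c=3+3-3=3$.

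\textbf{The topological fallback.} Your fallback is also invalid. Simple homotopy type does not recognize PL-balls among PL-manifolds: Mazur manifolds are contractible but not balls. A PL-manifold collapsing onto a suspension need not be a sphere or disk: an annulus collapses onto $S^1=\Sigma S^0$. The usable facts are these:
\begin{enumerate}
\item a collapsible compact PL-manifold is a PL-ball;
\item dual complexes of dlt modifications are crepant birational invariants up to PL-homeomorphism (de Fernex--Koll\'ar--Xu).
\end{enumerate}
The second fact, not simple-homotopy invariance, is what your first reduction requires.

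\textbf{Minor point.} $|B_Y|>\rho$ does not force $\lfloor B_Y\rfloor\neq 0$. For instance, $(\pp^2,\tfrac12(L_1+\dots+L_6))$ with general lines is klt of complexity $0$. Likewise your base case misses $(\pp^1,\tfrac12(p_1+\dots+p_4))$. These give empty dual complexes, so they are harmless, but the appeal to~\cite{BMSZ18} at that step is unfounded.
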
 

In the previous theorem, we say that a dual complex is a PL-manifold if every link of the dual complex is a PL-sphere. 
Of course, the case in which it is harder to understand the dual complex is when $c_{\rm bir}(X,B)=\dim X$. 
If $X$ is a Fano variety of Picard rank one, then this happens precisely when $B$ has a single irreducible component. 

\section{Finite actions on Fano varieties}
\label{sec:finite-actions}

In this section, we recollect some results regarding finite actions on Fano varieties.
First, we recall the following theorem, due to Prokhorov and Shramov, about finite actions on Fano varieties (see, e.g.,~\cite{PS16}). 

\begin{theorem}\label{thm:jordan-property}
There exists a constant $i_0:=i_0(n)$, only depending on $n$, satisfying the following.
Let $X$ be a $n$-dimensional Fano variety. 
If $G$ is a finite group acting on $X$, then $G$ admits a normal abelian subgroup $A$ of index at most $i_0$ and rank at most $\dim X$. 
\end{theorem}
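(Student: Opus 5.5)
The plan follows the Prokhorov--Shramov strategy: reduce to the Jordan property for linear groups via boundedness of Fano varieties and a fixed-point argument, using the minimal model program to handle the case where no fixed point is available. I argue by induction on $n=\dim X$. The case $n=1$ is immediate, since $X\simeq\pp^1$ and finite subgroups of ${\rm PGL}(2,\cc)$ are cyclic, dihedral, or one of $A_4,S_4,A_5$.

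\textbf{Step 1 (bounded index).} By Birkar's boundedness of Fano varieties (BAB), after passing to a suitable class of singularities the varieties $X$ are bounded in each dimension. Hence there is an $m=m(n)$ such that $-mK_X$ is very ample and $h^0(-mK_X)\leq N(n)$. The group $G$ acts on $H^0(X,-mK_X)$, which embeds $G\hookrightarrow {\rm PGL}(N)$ equivariantly. Lifting to ${\rm GL}(N)$ by a central extension and applying Jordan's theorem produces a normal abelian subgroup of index at most $J(N)$. The bound on the rank does not follow from this, so a finer argument is needed.

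\textbf{Step 2 (fixed points).} For the rank bound, the key input is: if a finite abelian group $A$ fixes a point $x$ of a variety of dimension $n$, then $A$ embeds into ${\rm GL}(T_xX)$. Passing to a resolution if necessary, it embeds into ${\rm GL}$ of a tangent space of dimension at most $n$. Such an $A$ is diagonalizable, so its rank is at most $n$. I would therefore look for a subgroup of bounded index with a fixed point.

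\textbf{Step 3 (producing the fixed point).} Take an equivariant MMP on a $G$-equivariant resolution of $X$, which ends in a $G$-Mori fiber space $Y\to Z$.
\begin{itemize}
\item If $\dim Z>0$, the induction hypothesis applies to the rationally connected base $Z$ and to the general fiber, which is a Fano variety of lower dimension. Combining the two normal abelian subgroups gives a bounded-index abelian subgroup. Its rank is at most $\dim Z+\dim(\text{fiber})=n$, after arguing that the kernel of the action on $Z$ acts faithfully on a general fiber.
\item If $\dim Z=0$, then $Y$ is a $G$-Fano variety. In this case I use that rationally connected varieties have $\chi(\mathcal O)=1$, so a holomorphic Lefschetz argument shows that any cyclic, or more generally abelian, subgroup has a fixed point. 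Then Step 2 applies to the abelian subgroup from Step 1.
\end{itemize}

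\textbf{Main obstacle.} Controlling the rank while keeping the index bounded independently of $X$ is the hard part. In particular, the abelian subgroup from Step 1 need not be generated by elements with a common fixed point. I would first try to show that a bounded-index subgroup of it has a fixed point by applying the Lefschetz argument to the whole abelian group: the fixed locus $Y^A$ is nonempty because $\chi(Y,\mathcal O_Y)=1$. After that, normality can be restored by taking the normal core of the subgroup, which changes the index only by a bounded factor.
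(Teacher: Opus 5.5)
The paper gives no proof of this statement; it quotes it from Prokhorov--Shramov, so your argument has to stand on its own. Its skeleton is sound: regularize, run a $G$-MMP, apply BAB to the terminal $G$-Fano output $Y$, lift to $\mathrm{SL}_N$ and apply Jordan. This does give a bounded-index normal abelian subgroup when $\dim Z=0$. The rank bound, however, rests on two false claims.

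First, holomorphic Lefschetz gives fixed points of single elements, not of groups. The Klein four-group $\langle z\mapsto -z,\ z\mapsto 1/z\rangle$ acts on $\mathbb{P}^1$ without a common fixed point, although $\chi(\mathcal{O}_{\mathbb{P}^1})=1$, so ``$Y^A\neq\emptyset$'' is false. Nor can you iterate over generators, because components of $Y^g$ need not be rationally connected: for the covering involution of a double cover of $\mathbb{P}^3$ branched along a quartic, $Y^g$ is a K3 surface.

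Second, Step 2 fails at singular points, which is exactly where you would apply it, since $Y$ is only terminal. Let $A=(\mathbb{Z}/2)^4$ act on the nodal quadric $Y=\{x_0^2+x_1^2+x_2^2+x_3^2=0\}\subset\mathbb{P}^4$ by sign changes of $x_0,\dots,x_3$. This $Y$ is a terminal Fano threefold, $A$-$\mathbb{Q}$-factorial of $A$-Picard rank one (each sign change swaps the two families of planes through the vertex), so it can occur as your $Y$. The vertex is the only $A$-fixed point of $Y$, and $A$ has rank $4>\dim Y$. On the blow-up of the vertex $A$ has no fixed point: such a point would lie on the exceptional quadric $\{\sum_i x_i^2=0\}\subset\mathbb{P}^3$, whereas the common fixed points of the sign changes on $\mathbb{P}^3$ are the coordinate points, none of which lies on it. So a fixed point on $Y$ neither bounds the rank nor survives passing to a resolution.

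The case $\dim Z>0$ has a separate gap: ``combining the two normal abelian subgroups'' cannot be done by group theory alone. The Heisenberg group of unipotent upper-triangular $3\times 3$ matrices over $\mathbb{Z}/m$ is an extension of $\mathbb{Z}/m$ by $(\mathbb{Z}/m)^2$. Yet each of its abelian subgroups has index at least $m$, since modulo the centre it is isotropic for the nondegenerate commutator pairing.

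The missing idea, and the real content of Prokhorov--Shramov's argument, is to work with fixed points rather than abelian subgroups. There is $L(n)$ such that every finite group acting on an $n$-dimensional rationally connected variety has a subgroup of index at most $L(n)$ with a fixed point. This is proved using BAB and an induction through $G$-Mori fibre spaces; fixed points behave well along such fibrations, whereas abelian subgroups do not, as the Heisenberg example shows.

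Granting it, apply it to a $G$-equivariant resolution $\tilde X$. A subgroup $F\le G$ of index at most $L(n)$ fixes a smooth point $P$, so $F$ embeds in $\mathrm{GL}(T_P\tilde X)\cong \mathrm{GL}_n(\mathbb{C})$. Jordan gives a normal abelian $A\le F$ of index at most $J(n)$, which is diagonalizable and hence of rank at most $n$. Its normal core in $G$ has index at most $(L(n)J(n))!$ and rank at most $n$, so index and rank come out together, with no gluing along $Y\to Z$.

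Two smaller points. BAB bounds the terminal $Y$, not $X$ itself: klt Fano varieties such as $\mathbb{P}(1,1,m)$ form an unbounded family. And since $Z$ need not be Fano, the induction must be set up for rationally connected varieties.
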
 

The previous theorem is known as the {\em Jordan property} for finite actions on Fano varieties. 
In the 1870s, Camille Jordan proved that given a finite subgroup $G\leqslant {\rm GL}(n,\cc)$ there is a normal abelian subgroup $A\leqslant G$ of index at most $c(n)$. Note that, by simultaneous diagonalization over $\cc$, such abelian subgroup $A$ must have rank at most $n$. In~\cite{Col08}, Collins proved that we can take $c(n)=(n+1)!$ for $n\geq 71$. 
In the modern literature, statements asserting that finite subgroups of a geometric group $\mathcal{G}$ are almost abelian, in terms of some invariant of $\mathcal{G}$, are known as the Jordan property for $\mathcal{G}$.
The Jordan property is very common in algebraic geometry especially for finite actions on varieties.
We note that the previous theorem is also valid for rationally connected varieties, however, we mostly focus on the Fano setting throughout this article.

The previous theorem states that finite actions on $n$-dimensional Fano variety are {\em almost} abelian. Further, the rank of the acting abelian group is bounded above by the dimension of the variety. 
For instance, if $T$ is a toric Fano variety of dimension $n$, then we have $(\zz/m\zz)^n \leqslant {\rm Aut}(T)$ for every $m$.
One may wonder about the opposite direction: 
given an $n$-dimensional Fano variety $X$ with 
$(\zz/m\zz)^n \leqslant {\rm Aut}(X)$, where $m$ is large compared to $n$, what can we say about $X$? A priori, this question has nothing to do with cluster type varieties.
However, the following theorem shows a deep connection.
The proof of the following theorem is in~\cite[Theorem 2]{Mor20c}.

\begin{theorem}\label{thm:finite-actions}
There exists a constant $m_0:=m_0(n)$, only depending on $n$, satisfying the following. 
Let $X$ be a $n$-dimensional Fano type variety.
If $A:=(\zz/m\zz)^n \leqslant {\rm Aut}(X)$, with $m\geq m_0$, then $X$ is of cluster type. 
More precisely, there exists a torus embedding $\mathbb{G}_m^n\hookrightarrow X$ such that the volume form $\Omega_n$ has no zeros on $X$ and $A$ acts on the algebraic torus as a subgroup.
\end{theorem}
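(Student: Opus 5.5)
We sketch the argument behind Theorem~\ref{thm:finite-actions}: produce an $A$-invariant Calabi--Yau structure of coregularity zero and complexity zero, and then recover a torus from it via Theorem~\ref{thm:charact-toric} and Theorem~\ref{thm:comp-zero}.

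\textbf{Step 1: an $A$-invariant complement.} Since $X$ is Fano type, boundedness of complements (Birkar) gives an integer $N=N(n)$ and an $A$-invariant $N$-complement $B$. That is, $(X,B)$ is log canonical, $N(K_X+B)\sim 0$, and $B$ is $A$-invariant. The $A$-invariance can be arranged because complements can be chosen equivariantly for finite groups after passing to the quotient $X/A$, which is again Fano type.

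\textbf{Step 2: coregularity zero.} Consider the quotient pair $(X/A,B_{X/A})$ given by Riemann--Hurwitz. If $\operatorname{coreg}(X,B)>0$, the minimal log canonical centers of a dlt modification give a positive-dimensional Calabi--Yau stratum. By the Jordan property (Theorem~\ref{thm:jordan-property}) applied inductively on these strata, and the bounded index $N$, we aim to show that $A$ acts on such a stratum through a group whose rank is too small for $m\ge m_0$. This should force $\operatorname{coreg}(X,B)=0$. Replacing $B$ by a suitable $A$-invariant divisor, we then also want a reduced index-one boundary with $K_X+B\sim 0$.

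\textbf{Step 3: complexity zero and the torus (main obstacle).} Here the large abelian group of rank $n$ must be converted into toric structure. The idea is to pass to an $A$-equivariant dlt modification $(Y,B_Y)$ and look at a zero-dimensional stratum $y$. Its stabilizer in $A$ has bounded index, because the number of zero-dimensional strata is bounded. This stabilizer acts on the tangent space at $y$ diagonally along the $n$ boundary components, with characters of large order. Using this, we want to run an $A$-equivariant MMP from $Y$, contracting only components not in $B_Y$ (or extracting divisors of log discrepancy zero), to reach a model $(X',B')$ on which the local complexity at $y$ is zero. We would then argue globally that $c(X',B')<1$. The hard part is making the step from local to global complexity work. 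The approach we would try first is a fixed-point argument: the fixed loci of large-order subgroups of $A$ are unions of strata, and counting $A$-invariant components should make $|B'|$ large.

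\textbf{Step 4: conclusion.} Once $c(X',B')<1$, Theorem~\ref{thm:charact-toric} shows that $X'$ is toric with $\lfloor B'\rfloor$ torus invariant. Since $(X,B)$ has coregularity zero and index one, $B'$ is exactly the toric boundary, and $A$ sits inside the torus, since large abelian subgroups of $\operatorname{Aut}$ of a toric pair lie in the torus up to bounded index. The crepant birational map $(X',B')\dashrightarrow(X,B)$ only extracts or contracts boundary components, so it restricts to an embedding of $\mathbb{G}_m^n$ in codimension one. Hence $(X,B)$ is of cluster type, with $A$ acting on the torus as a subgroup.
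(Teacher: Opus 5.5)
Your outline has a genuine gap exactly where the theorem has content. Steps 2 and 3, which must turn the large rank-$n$ action into a toric model, are stated only as aims, and the mechanisms you propose for them do not work.

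In Step 2, rank counting via the Jordan property cannot produce a contradiction. Suppose a bounded-index subgroup $A'\leqslant A$ stabilizes a $d$-dimensional minimal stratum $W$ of a dlt model, together with each boundary component through $W$. Then the subgroup fixing $W$ pointwise acts faithfully and diagonally on the $n-d$ normal directions, so it has rank at most $n-d$. Hence the image of $A'$ in ${\rm Aut}(W)$ has rank at least $d=\dim W$. That is exactly what a Jordan-type bound on $W$ allows, and if $W$ is not rationally connected even that bound fails: an elliptic curve carries $(\zz/m\zz)^2$ by translations. So ranks alone do not exclude positive coregularity. Getting a reduced $B$ with $K_X+B\sim 0$ is not automatic either, since coregularity zero pairs of index two exist, e.g.\ $(\pp^1,q+\tfrac12p_1+\tfrac12p_2)$.

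In Step 3, the number of zero-dimensional strata is not bounded in terms of $n$. A toric pair has one for each maximal cone, and blowing up a zero-dimensional stratum of a dlt pair is crepant and increases their number. So the claim that the stabilizer of $y$ has bounded index is unjustified. More seriously, at a zero-dimensional stratum $y$ of a dlt pair, $Y$ is smooth and exactly $n$ snc components of $B_Y$ pass through $y$. Therefore $c(Y,B_Y;y)=n+0-n=0$ automatically. The local condition carries no information, and everything rests on the global inequality $c(X',B')<1$, for which you give no argument.

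The heuristic you propose for that inequality already fails on the paper's example $X_m$. Let $\mu$ be a primitive $m$-th root of unity and $g\colon[x:y:z]\mapsto[\mu x:y:z]$. Then $g$ fixes every point of $Z_m$ and preserves each exceptional curve $E_k$, which is not a component of $H_m$. It has an isolated fixed point on $E_k$, namely the direction normal to $\{x=0\}$, and that point lies off $H_m$. So fixed loci of large-order subgroups are not unions of strata.

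A clear symptom of the gap is that the hypothesis $m\geq m_0$ enters no actual inequality in Steps 2--4, yet any proof must use it decisively. The Fermat cubic threefold carries $(\zz/3\zz)^3$ (scaling three coordinates) and is irrational, hence not of cluster type. In particular, you never play the bounded index $N$ from Step 1 against the exponent $m$. That interplay is what does the work in the model case. For $A=\mu_m^2$ acting diagonally on $\pp^2$, semi-invariant forms of degree $<m$ are monomials. So if $m>3N$, every $A$-orbit of components of an $A$-invariant $N$-complement is a union of coordinate lines, and the complement must be the toric boundary.

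In general you need a quantitative mechanism of this sort. One candidate is Riemann--Hurwitz for $X\to X/A$: a full-rank group of large exponent produces branch divisors with coefficients $1-\frac1k$, $k\gg0$. The real work is then to find a model of the quotient on which these divisors force complexity below one, so that Theorem~\ref{thm:charact-toric} applies there. The torus then pulls back, since a connected finite cover étale over $\mathbb{G}_m^n$ is again $\mathbb{G}_m^n$, with deck group acting by translations. The survey itself defers the proof to \cite[Theorem 2]{Mor20c}.

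Your Step 4 is essentially Proposition~\ref{prop:MMP-ct} and is fine once an $A$-equivariant toric model exists. The one correction is that \emph{bounded index} must be upgraded to $A\leqslant\mathbb{G}_m^n$. This holds because an automorphism of the torus commuting with all of its $m'$-torsion, for $m'\geq 3$, has trivial linear part.
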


In other words, the $n$-dimesional Fano varieties with the largest finite actions must be of cluster type.
Explicit bounds for Theorem~\ref{thm:finite-actions} are not known. Some explicit values are expected to be found up to dimension three. One may wonder if $X$ in Theorem~\ref{thm:finite-actions} must be toric or not. The following example shows that cluster type is the best that one can hope for in this direction. 

\begin{example}
{\em 
Consider the projective space $\pp^2$ and the finite action of $A_m:=(\zz/m\zz)^2$ given by 
\[
[x:y:z]\mapsto [\mu_1x:\mu_2y:z]
\]
where $\mu_1$ and $\mu_2$ are two $m$-th roots of unity. 
Consider the set $Z_m:=\{ [0:\mu_2^k:1] \mid k\in \{1,\dots,m\}\}$. Then, the set $Z_m$ is invariant under the $A_m$-action on $\pp^2$.
Let $\pi_m \colon X_m \rightarrow \pp^2$ be the blow-up of $\pp^2$ along $X_m$. The variety $X_m$ is of Fano type 
and $A_m \leqslant {\rm Aut}(X_m)$. 
We argue that $X_m$ is not a toric variety, although it has a one-dimensional torus action. 
If $\mathbb{G}_m^2\leqslant {\rm Aut}(X_m)$, then the algebraic torus would fix all the $(-1)$-curves on $X_m$. Therefore, we would get a torus on $\mathbb{P}^2$ that fixes $Z_m$ pointwise. 
But, a $2$-dimensional torus action on $\pp^2$ fixes at most three points. On the other hand, we argue that $X_m$ is of cluster type.
Indeed, if we write 
\[
K_{X_m} + H_m = \pi_m^*(K_{\pp^2}+H_0+H_1+H_2) 
\]
then $H_m$ is just the strict transform of $H_0+H_1+H_2$ in $X_m$. 
Thus, the pair $(X_m,H_m)$ is Calabi--Yau of index one and we have an embedding $\mathbb{G}_m^2 \hookrightarrow X_m\setminus H_m$. 
Therefore, for $m$ sufficiently large, the sequence of surfaces $X_m$ admits $(\zz/m\zz)^2 \leqslant {\rm Aut}(X_m)$, and each $X_m$ is cluster type but not toric.
}
\end{example} 

In the setting of Theorem~\ref{thm:finite-actions}, we may drop the condition of $A$ having rank equal to the dimension of the underlying variety $X$. 
In this case~\cite[Theorem 1.1]{Mor24a} gives a reasonable geometric outcome whenever we have large finite actions on Fano type varieties (or more generally, rationally connected varieties). 

\begin{theorem}\label{thm:weak-geom-jordan}
There exists a constant $c(n)$, only depending on $n$, satisfying the following.
Let $X$ be a $n$-dimensional Fano type variety. 
Let $G$ be a finite group acting on $X$.
Then, there exists a normal abelian subgroup 
$A\leqslant G$ of index at most $c(n)$ and rank $k\leq n$ satisfying the following. 
There exists an $A$-equivariant birational map $X\dashrightarrow X'$ and an $A$-equivariant fibration $X'\rightarrow \pp^k$ such that $A$ induces an action on $\pp^k$ which factors through a maximal torus.
\end{theorem}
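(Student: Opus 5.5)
The plan is to combine the Jordan property (Theorem~\ref{thm:jordan-property}) with an equivariant MMP and an induction on dimension, with Theorem~\ref{thm:finite-actions} as the model for what happens when the rank is maximal. First, I apply Theorem~\ref{thm:jordan-property} (valid for Fano type, indeed rationally connected, varieties) to obtain a normal abelian subgroup $A_0\leqslant G$ of index at most $i_0(n)$ and rank at most $n$. Passing to a further characteristic subgroup of bounded index, I would like to arrange that $A_0\simeq \prod_{i=1}^k \zz/m_i\zz$ with every $m_i$ larger than a threshold $m_0(n)$. If some cyclic factors have bounded order, I discard them by taking a bounded-index subgroup. The resulting rank $k\le n$ is the rank in the statement. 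Since the discarded factors have bounded order, the index of $A$ in $G$ stays bounded by a function $c(n)$. Normality in $G$ is kept by using characteristic subgroups such as $A_0^{N}$ for a suitable bounded $N$.

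Next, I run an $A$-equivariant MMP on a $G$-invariant resolution of $X$. The variety stays rationally connected, so the MMP ends in an $A$-equivariant Mori fiber space $X''\to Z$ with $\dim Z<n$. The group $A$ splits into the part acting faithfully on $Z$ and the part acting fiberwise, and I induct on dimension for both. Concretely, the kernel of the action on $Z$ acts on the general fiber, which is a Fano variety; its image acts on $Z$, which is rationally connected. Applying the theorem inductively to $Z$ with the image group, and to the general fiber with the kernel, I obtain fibrations to projective spaces of the corresponding ranks on which the action factors through tori. Then I would compose these, possibly after further equivariant birational modifications, to get $X'\to\pp^{k}$.

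The cleaner route, which I would try first, is to use the largeness of $A$ directly. Using a bounded $A$-invariant complement (which Fano type varieties admit by boundedness of complements), I would get an $A$-invariant log Calabi--Yau structure $(X,B)$ of coregularity zero on a suitable model. Then I pass to the log canonical center, or dlt modification, structure along $B$. A large abelian group of rank $k$ acting on a coregularity zero pair should force a $k$-dimensional quotient that is toric: the "toric part" of the action. This step is essentially the argument of Theorem~\ref{thm:finite-actions} done in families. Theorem~\ref{thm:charact-toric} applied on the base of an $A$-equivariant fibration, with complexity estimates, would show that the base is birational to $\pp^k$ with $A$ acting through a maximal torus.

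The main obstacle is producing a fibration whose base has dimension exactly $k$ and on which $A$ acts faithfully through the torus, rather than merely some equivariant fibration. The subgroup of bounded-order elements can act in a non-toric way, and the fiber and base contributions to the rank must add up correctly. I expect to handle this by choosing the invariant complement so that the $A$-action on a minimal log canonical center is, up to bounded index, a subgroup of a torus. Then I would show that the maximal rationally connected quotient of the orbit closures, on which $A$ acts with large orbits, gives the desired map to $\pp^k$.
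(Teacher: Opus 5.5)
The survey does not prove this statement; it quotes \cite[Theorem 1.1]{Mor24a}. Judged on its own, your proposal is not yet a proof. The step you yourself flag as the main obstacle is never carried out: producing an $A$-equivariant map onto a $k$-dimensional base on which $A$ acts faithfully through a torus. Each of your routes breaks at a concrete point.

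\textbf{The Mori fibre space route.} The inductive hypothesis, applied to one general fibre $F$ with the kernel $K$, gives a construction on that fibre only: a bounded-index subgroup of $K$, a model $F'$, and a map $F'\to\pp^{k_2}$, all obtained by non-canonical choices. Since $A/K$ moves the fibres, nothing glues these into an $A$-equivariant rational map on $X''$. You would need a relative statement over the generic point of $Z$, that is, for a Fano variety over the non-closed field $\cc(Z)$ on which $A$ acts only semilinearly. The absolute statement does not give that. (You also apply a Fano type statement to $Z$, which is only rationally connected once you run the MMP on a resolution.)

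Even granting both pieces, a $\pp^{k_2}$-fibration over $\pp^{k_1}$ does not automatically carry a toric structure with $A$ inside its torus, even when $A$ acts torically on the base and the kernel acts torically on the fibres. For example, take $\eta^m=-1$ and the group generated by the translations $\mu_m^2$ together with $(x,y)\mapsto(\eta x,\,y\frac{1+x^m}{1-x^m})$. It is finite abelian and acts on the $x$-line through the torus, and its kernel acts on the fibres through the torus. Yet it is not contained in the standard torus until the cocycle is untwisted, and that requires a separate argument.

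\textbf{The complement route.} Boundedness of invariant complements gives an $A$-invariant $N(n)$-complement but no control on log canonical centres; the complement may be klt. For $k<n$, a rank-$k$ action only controls $k$ directions, so there is no reason to expect coregularity zero; at best one would expect something like coregularity $\le n-k$. The claim that a large rank-$k$ action then ``forces a $k$-dimensional toric quotient'' is the theorem itself. Moreover, Theorem~\ref{thm:charact-toric} needs complexity $<1$ on the base, and nothing in your setup provides that.

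\textbf{The closing idea.} This is not meaningful: $A$-orbits are finite sets, and the MRC quotient of the rationally connected $X$ is a point.

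\textbf{What actually has to be proved.} First, your reduction to $A\simeq\prod_{i=1}^k\zz/e_i$ with large invariant factors needs a pigeonhole choice of threshold, because passing to $A^N$ can shrink the surviving factors. After that reduction, an equivariant dominant map is cheap:
\begin{itemize}
\item By the normal basis theorem, $\cc(X)$ is free of rank one over $\cc(X)^A[A]$. So there are semi-invariant $h_1,\dots,h_k\in\cc(X)$ whose characters generate $\hat A$.
\item Multiplying the $h_i$ by suitable invariants makes them algebraically independent.
\item Then $(h_1,\dots,h_k)\colon X\dashrightarrow\mathbb{G}_m^k\subset\pp^k$ is $A$-equivariant and dominant, with $A$ acting faithfully by translations.
\end{itemize}
The real content is connectedness of fibres. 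The base $W$ of the Stein factorization is a $k$-dimensional rationally connected variety with a faithful action of $A$. One must show that $W$ is $A$-equivariantly birational to $\pp^k$ with $A$ in the maximal torus. That is a full-rank statement in the spirit of Theorem~\ref{thm:finite-actions}, needed for rationally connected and not only Fano type varieties, and none of your routes supplies it.
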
 

The previous theorem can be regarded as a geometric version of the Jordan property for finite actions on Fano varieties. It says that large finite actions on Fano type varieties come from torus action on bases of fibrations. 
In the setting of Theorem~\ref{thm:weak-geom-jordan}, one may wonder if the torus action of $\pp^k$ can be lifted to a torus action on $X$. 
There are simple examples of birationally rigid therefolds $X_m$ for which $\zz/m\zz\leqslant {\rm Aut}(X_m)$, however, the variety $X_m$ does not admit an algebraic torus action, not even birationally.
In this direction, we propose a geometric Jordan property for the Cremona group.

\begin{conjecture}\label{conj:geom-jordan}
There exists a constant $c(n)$, only depending on $n$, satisfying the following. 
Let $X$ be a $n$-dimensional rational variety. 
Let $G$ be a finite group acting on $X$. 
Then, there exists a normal abelian subgroup $A\leqslant G$ of index at most $c(n)$ with 
$A\leqslant \mathbb{G}_m^k \leqslant {\rm Bir}(X)$.
\end{conjecture} 

Conjecture~\ref{conj:geom-jordan} would imply that, in the context of Theorem~\ref{thm:jordan-property}, the torus action of $\mathbb{G}_m^k$ in $\pp^k$ can indeed be lifted to some birational model of $X'$.
We may refer to Conjecture~\ref{conj:geom-jordan} as the {\em geometric Jordan property for the Cremona group}. This conjecture was proved in dimension two by Wang~\cite{Wan25}. The conjecture remains open in dimension three and higher.

\section{The cluster type condition in families} 
\label{sec:ct-in-families}

In this section, we discuss how the cluster type condition behaves in families of Fano varieties.
By the work of De Fernex and Fusi~\cite{dFF13}, in a family of varieties, the rational condition holds over a countable union of constructible sets. 
Furthermore, the locus of rational fibers in a family of smooth Fano varieties is not constructible by the work of Hassett, Pirutka, and Tschinkel~\cite{HPT18}.
In~\cite{JM24}, Ji and the author proved the following theorem. 

\begin{theorem}\label{thm:constructibility-cluster-type}
Let $\mathcal{X}\rightarrow T$ be a family of $\qq$-factorial terminal Fano varieties. Then, the set 
\[
T_{\rm cluster}:=\{ t\in T \mid \mathcal{X}_t \text{ is of cluster type}\} 
\]
is a constructible subset of $T$. 
\end{theorem}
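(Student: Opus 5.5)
The plan is to reduce constructibility to a statement about boundedness of the birational data witnessing the cluster type condition, and then to use Noetherian induction on $T$. By Definition~\ref{def:ct}, $\mathcal{X}_t$ is of cluster type exactly when there is a boundary $B_t\in|-K_{\mathcal{X}_t}|$ together with a crepant birational map $\phi_t\colon(\pp^n,H_0+\dots+H_n)\dashrightarrow(\mathcal{X}_t,B_t)$ that contracts no divisor meeting $\mathbb{G}_m^n$. The terminal $\qq$-factorial hypothesis is what makes this workable. Since $\mathcal{X}_t$ is terminal, every divisor $E$ over $\mathcal{X}_t$ that is exceptional satisfies $a_E(\mathcal{X}_t,0)>1$. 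Crepancy then forces every divisor extracted by $\phi_t^{-1}$ to be an lc place of $(\mathcal{X}_t,B_t)$, so it lies over $B_t$. On the torus side, every divisor contracted by $\phi_t$ is among the $H_i$ or is an lc place of the toric pair, hence a toric valuation.

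First, I will show that the toric valuations needed can be bounded. The aim is a toric model $(T_t,B_{T_t})$ with a crepant birational contraction $T_t\dashrightarrow \mathcal{X}_t$ that only extracts divisors over the toric boundary, where the fan of $T_t$ lies in a finite set depending on $n$ and on the (bounded) family. I would obtain this from boundedness of Fano varieties, since the family is fixed, together with boundedness of complements. Using Theorem~\ref{thm:bircomp-vs-coreg} and the fact that $B_t$ has index one and coregularity zero, the number of components of a dlt modification of $(\mathcal{X}_t,B_t)$ and their log discrepancy data are bounded, and this bounds the toric valuations involved. Then, for each of the finitely many possible combinatorial types (a fan $\Sigma$ together with the set of toric divisors that must survive on $\mathcal{X}_t$), I will parametrize all candidate data by a finite type scheme over $T$: the elements $B_t\in|-K_{\mathcal{X}_t}|$, and the relative blow-up/contraction data realizing a birational map from the toric variety $X(\Sigma)$ to $\mathcal{X}_t$ as the map induced by $|{-mK}|$ on a bounded model. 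The locus where such data exist is the image of a finite type scheme under a morphism, so by Chevalley it is constructible.

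Next, I need the conditions ``$\phi_t$ is crepant'' and ``$\phi_t$ contracts no divisor of $\mathbb{G}_m^n$'' to be constructible conditions on this parameter scheme. Crepancy reduces to the equality $K+B$ pulling back to $K+B$ on a common resolution. After a stratification of the parameter space making a simultaneous resolution exist, this becomes a closed condition on coefficients. The non-contraction condition is open on strata where the exceptional locus varies flatly. Taking the union over the finitely many combinatorial types then gives $T_{\rm cluster}$ as a finite union of constructible sets.

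I expect the main obstacle to be the first step: bounding the toric model uniformly in $t$, in other words showing that a cluster type structure, if one exists, can be realized with birational data of bounded complexity. Unlike rationality, the torus charts are rigid, because Theorem~\ref{thm:structural-thm-ct} shows they are volume preserving. I would try first to bound the number of blow-ups of toric strata through the bound on the number of lc places with small centers, using the Picard rank of $\mathcal{X}_t$, which is bounded in the family, together with the fact that exceptional divisors of $\phi_t^{-1}$ are lc places of a bounded index one pair. If this failed, I would pass to a $\qq$-factorial dlt model in families and run a relative MMP to reduce to bounded Mori fiber space outputs.
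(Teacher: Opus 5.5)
There is a genuine gap, and it is exactly where you expect it. The uniform bound on the toric models is the whole content of the theorem, and nothing in your proposal establishes it. Without that bound, your parameter spaces form only a countable union of finite type schemes over $T$. Chevalley then shows only that $T_{\rm cluster}$ is a countable union of constructible sets. That is precisely the shape of the rationality locus (\cite{dFF13}), which is not constructible in general (\cite{HPT18}).

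The tools you invoke do not produce the bound:
\begin{itemize}
\item Theorem~\ref{thm:bircomp-vs-coreg} bounds $c_{\rm bir}$, which is an infimum over all crepant models. For a cluster type pair $c_{\rm bir}=0$ anyway, so it says nothing about which lc places must be extracted to reach a toric model, or about the degree of $\phi_t$.
\item An index one, coregularity zero pair has infinitely many lc places, all with log discrepancy $0$, and it admits dlt modifications with arbitrarily many components. So ``bounded number of components and log discrepancy data'' is not a constraint you can prove or use.
\item Boundedness of complements only controls the index, which is already one.
\item The bounded Picard rank of $\mathcal{X}_t$ does not bound the Picard rank of the dlt model that maps to $T_t$.
\end{itemize}
A single pair such as $(\pp^2,C_3)$ already carries infinitely many torus charts. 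What you need is that \emph{some} chart of bounded complexity exists uniformly in $t$, and that is essentially a consequence of the theorem rather than an input to it. Also, your use of terminality is not where the hypothesis matters. That the divisors extracted by $\phi_t^{-1}$ are lc places follows from Definition~\ref{def:ct} and crepancy alone.

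The paper avoids any a priori bound. It propagates a cluster type structure from a general fiber to an open subset of the base, and finite base changes are harmless by Chevalley. Noetherian induction then concludes. Concretely:
\begin{enumerate}
\item Extend a cluster type boundary $\mathcal{B}_t\in|-K_{\mathcal{X}_t}|$ to nearby fibers.
\item By Proposition~\ref{prop:MMP-ct}, encode the cluster type structure as a dlt modification $\phi_t$ followed by a crepant birational contraction $\psi_t$ to a toric pair; $\psi_t$ is an MMP.
\item Extend both $\phi_t$ and $\psi_t$ over the family. This is done by extending divisors from $\mathcal{X}_t$ to $\mathcal{X}$ and running the MMP in families. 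This is where $\qq$-factoriality and terminality enter, through invariance of plurigenera.
\item Show that the fibers of the resulting family $(\mathcal{T},\mathcal{B}_{\mathcal{T}})\to T$ near $t$ are toric, by computing their complexity and applying Theorem~\ref{thm:charact-toric}.
\end{enumerate}
Your fallback of running a relative MMP on a dlt model in families points in this direction. The two missing ingredients are the extension of divisors from one fiber to the whole family, and the complexity argument that recognizes the nearby outputs as toric.
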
 

In particular, the previous theorem is valid for families of smooth Fano varieties. We expect the previous theorem to also be valid for families of klt Fano varieties, however, there are some technicalities in the proof related to invariance of plurigenera, which made us focus on the terminal case. Thus, the following problem comes along naturally. 

\begin{problem}
Prove that the cluster type condition is constructible in families of klt Fano varieties.
\end{problem} 

We argue that in families of varieties the cluster type condition is neither closed nor open. 

\begin{example}\label{ex:ct-not-closed}
{\em 
Consider a Gorenstein del Pezzo surface $\mathcal{X}_0$ of rank one with ${\rm vol}(\mathcal{X}_0)\geq 2$. 
Assume that $\mathcal{X}_0$ has $D$ singularities, 
so $\mathcal{X}_0$ is not of cluster type. 
For instance, we can choose $\mathcal{X}_0$ to have volume $4$ and a single $D_5$ singularity (see, e.g.,~\cite[Lemma 3]{MZ88}). 
As $-K_{\mathcal{X}_0}$ is big and $\mathcal{X}_0$ has only smoothable singularities, we conclude that $\mathcal{X}_0$ is indeed smoothable.
Let $\pi\colon \mathcal{X}\rightarrow \mathbb{D}$ be a smoothing of $\mathcal{X}_0$. 
Then, the variety $\mathcal{X}_t$ is a smooth Fano of degree $4$, for $t\neq 0$, and so $\mathcal{X}_t$ is of cluster type by Proposition~\ref{prop:Looijenga-pairs-are-cluster-type}.
Thus, the central fiber $\pi\colon \mathcal{X}\rightarrow \mathbb{D}$ is not cluster type, however, the general fiber of $\pi$ is cluster type.
A smoothing of a Gorenstein del Pezzo surface $X$ of rank one is cluster type provided $\vol(X)\geq 2$. 
This follows from Proposition~\ref{prop:nodal-curve}.
From this example, we conclude that the cluster type condition is not closed. 
}
\end{example} 

\begin{example}\label{ex:ct-not-open}
{\em 
Consider $\mathcal{Y}_0\subset \pp^4$ to be a Segre cubic. This variety is of cluster type. 
One can show this statement by describing 
$\mathcal{Y}_0$ as the anti-canonical model of a blow-up of $\pp^3$ at $5$ general points. 
We have a birational contraction $Z:={\rm Bl}_{p_1,\dots,p_5}(\pp^3)\rightarrow \mathcal{Y}_0$ that does not contract divisors.
Let $H_1,\dots,H_4$ be four hyperplanes such that each of the $5$ points $\{p_1,\dots,p_5\}$ is contained in at least two of the $H_i$'s. 
Let $\pi\colon Z\rightarrow \pp^3$ be the blow-up. 
Then, we obtain a Calabi--Yau pair 
\[
\pi^*(K_{\pp^3}+H_1+\dots+H_4)=K_Z+B_Z
\]
of index one and coregularity zero. 
Note that we have an embedding $\mathbb{G}_m^3\hookrightarrow Z\setminus B_Z$. 
So, the pair $(Z,B_Z)$ is of cluster type. 
If we push-forward $B_Z$ to $\mathcal{Y}_0$, then we obtain a boundary $B_{\mathcal{Y}_0}$ such that 
$(\mathcal{Y}_0,B_{\mathcal{Y}_0})$ is a cluster type pair. Let $\mathcal{Y}\rightarrow \mathbb{D}$ be a smoothing of the Segre cubic. 
Above, we concluded that $\mathcal{Y}_0$ is of cluster type. A smooth cubic $\mathcal{Y}_t$ is not rational, in particular, it cannot be cluster type. 
This example shows that the cluster type condition is not open.
}
\end{example} 

Example~\ref{ex:ct-not-closed} and Example~\ref{ex:ct-not-open} show that the cluster type condition is neither closed nor open. 
However, it is worth noticing that in both examples 
there is a presence of canonical singularities. 
It is still unclear whether the statement of Theorem~\ref{thm:constructibility-cluster-type} is optimal or not. Thus, we propose the following problem.

\begin{problem}
Prove that the cluster type condition is open in families of terminal Fano varieties. 
\end{problem} 

Together with the boundedness of smooth Fano varieties~\cite{KMM92b}, we conclude the following corollary. 

\begin{corollary}
Let $n$ be a positive integer. Then, there are only finitely many families of smooth Fano varieties of cluster type. 
\end{corollary}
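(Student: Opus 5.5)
The plan is to combine the boundedness of smooth Fano varieties with the constructibility result, Theorem~\ref{thm:constructibility-cluster-type}, and a Noetherian argument. The corollary should be read as follows: for fixed $n$, the $n$-dimensional smooth Fano varieties of cluster type are parametrized by finitely many families, that is, by finitely many irreducible varieties over which each fiber is cluster type.

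First, by~\cite{KMM92b} there is a projective morphism $\mathcal{X}\rightarrow S$ with $S$ of finite type such that every $n$-dimensional smooth Fano variety is isomorphic to some fiber $\mathcal{X}_s$. Smoothness and ampleness of $-K$ are open conditions on the base (openness of the smooth locus of a flat morphism, openness of ampleness). So after replacing $S$ by the open subset where the fibers are smooth Fano, and then passing to a flattening stratification, I may assume that $S$ is a finite disjoint union of irreducible varieties $S_1,\dots,S_r$. Over each $S_i$ the family $\mathcal{X}_i\rightarrow S_i$ is flat and all fibers are smooth Fano varieties, hence $\qq$-factorial and terminal.

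Next, I apply Theorem~\ref{thm:constructibility-cluster-type} to each family $\mathcal{X}_i\rightarrow S_i$. This shows that $(S_i)_{\rm cluster}$ is a constructible subset of $S_i$. A constructible subset of a Noetherian scheme is a finite disjoint union of locally closed irreducible subvarieties $S_{ij}$. Restricting $\mathcal{X}_i$ to each $S_{ij}$ gives finitely many families $\mathcal{X}_{ij}\rightarrow S_{ij}$. Every fiber of each of these families is a smooth Fano variety of cluster type, and every $n$-dimensional smooth Fano variety of cluster type occurs as one of these fibers. This proves the corollary.

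The only delicate point is the hypothesis of Theorem~\ref{thm:constructibility-cluster-type}, which requires a family of $\qq$-factorial terminal Fano varieties. I must make sure that the bounding family can be replaced by one whose base is of finite type and over which every fiber is a smooth Fano variety. Openness of smoothness and ampleness handles this, together with stratifying so the morphism is flat. Beyond this the argument is formal, and I do not expect any real obstacle.
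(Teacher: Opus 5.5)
Your proposal is correct and follows the paper's argument, which simply combines the boundedness of smooth Fano $n$-folds from \cite{KMM92b} with the constructibility result, Theorem~\ref{thm:constructibility-cluster-type}. Your flattening and Noetherian stratification just spell out the step the paper leaves implicit. The only cosmetic fix is to flatten before invoking openness of the smooth-fiber locus, since that openness uses flatness together with properness.
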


The proof of Theorem~\ref{thm:constructibility-cluster-type} uses the following proposition which gives a characterization of cluster type pairs. 

\begin{proposition}\label{prop:MMP-ct}
Let $(X,B)$ be a log pair. 
The pair $(X,B)$ is of cluster type if and only if there exists a dlt modification $\phi\colon (Y,B_Y)\rightarrow (X,B)$ and a crepant birational contraction
$\psi\colon (Y,B_Y)\dashrightarrow (T,B_T)$ to a toric Calabi--Yau pair.
\end{proposition}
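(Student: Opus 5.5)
We prove the two directions separately, using Definition~\ref{def:ct}: $(X,B)$ is cluster type if there is a crepant birational map $\phi\colon(\pp^n,H_0+\dots+H_n)\dashrightarrow (X,B)$ which contracts no divisor meeting $\mathbb{G}_m^n$.

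\emph{The "if" direction.} Let $\phi\colon (Y,B_Y)\to(X,B)$ be a dlt modification and $\psi\colon (Y,B_Y)\dashrightarrow (T,B_T)$ a crepant birational contraction onto a toric Calabi--Yau pair. The argument has three steps.
\begin{enumerate}
\item Since $\psi$ is a contraction, $\psi^{-1}$ extracts no divisors. Every divisor of $T$ therefore has a divisorial center on $Y$, so $\psi^{-1}$ is an embedding in codimension one on $T$. In particular, $\psi^{-1}$ restricts to an embedding in codimension one $\mathbb{G}_m^n=T\setminus B_T\dashrightarrow Y$.
\item Because $\psi$ is crepant, $B_Y$ is exactly the strict transform of $B_T$ together with the $\psi$-exceptional divisors of log discrepancy zero. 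A divisor of $Y$ contracted by $\psi$ whose center meets $\mathbb{G}_m^n$ would have log discrepancy $>0$ with respect to $(T,B_T)$. Hence it would not lie in $B_Y$; such divisors may exist, and they are harmless for the embedding direction.
\item The map $\phi$ only extracts lc places, which lie over $B$, so it is an isomorphism over $X\setminus B$ away from codimension two. Composing with the crepant map $(\pp^n,\sum H_i)\dashrightarrow(T,B_T)$ gives a crepant map $(\pp^n,\sum H_i)\dashrightarrow (X,B)$. Its inverse restricted to the torus is an embedding in codimension one, so no divisor of $\mathbb{G}_m^n$ is contracted.
\end{enumerate}

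\emph{The "only if" direction.} Let $\phi_0\colon(\pp^n,\sum H_i)\dashrightarrow(X,B)$ be as in Definition~\ref{def:ct}. Take a $\qq$-factorial dlt modification $(Y,B_Y)\to(X,B)$. The induced crepant map $(\pp^n,\sum H_i)\dashrightarrow (Y,B_Y)$ still contracts no divisor meeting the torus. Indeed, the newly extracted divisors are lc places, hence toric valuations, hence not divisors of $\mathbb{G}_m^n$.

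Let $F_1,\dots,F_r$ be the prime divisors of $Y$ not in $\operatorname{Supp}B_Y$. Each $F_i$ has log discrepancy one, so its center on $\pp^n$ meets $\mathbb{G}_m^n$. Since only torus divisors survive, the $F_i$ are precisely the divisors on $Y$ that are exceptional over $\pp^n$ with center meeting the torus, together with the strict transforms of torus divisors.

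We must produce a birational contraction $Y\dashrightarrow T$ to a toric model. Choose a toric model $T'$ of $(\pp^n,\sum H_i)$ whose boundary divisors are exactly the components of $B_Y$. Such a $T'$ exists because the components of $B_Y$ are toric valuations, and we take a fan whose rays are those valuations. We then run an MMP on $Y$ that contracts the divisors $F_i$ whose centers on $T'$ are not divisorial; that is, we run a $(K_Y+B_Y+\epsilon F)$-MMP, with $F$ the sum of these $F_i$. Since $K_Y+B_Y\equiv 0$, this is an $F$-MMP. The divisor $F$ is exceptional over $T'$ and effective, so it is not movable in the relevant sense and the MMP should contract exactly $\operatorname{Supp} F$. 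The output $T$ is then isomorphic in codimension one to $T'$ and carries the toric pair structure, which we expect from Theorem~\ref{thm:charact-toric} with complexity zero.

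\emph{Main obstacle.} The delicate point is making this MMP step rigorous: we need termination, and we need the resulting model to be toric rather than merely to share its divisors with $T'$. For termination, $Y$ is of Fano type over a suitable base, or we can use that $(Y,B_Y-\delta B_Y+\epsilon F)$ is klt and $F$ is exceptional over $T'$, so~\cite{BCHM10} applies. For the toric conclusion we use the complexity computation: the output has $\dim{\rm Cl}_\qq = |B_T|-n$, so $c(T,B_T)=0$ and Theorem~\ref{thm:charact-toric} applies.
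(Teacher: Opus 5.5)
Your \emph{if} direction is fine. For the converse, your overall strategy matches what the paper has in mind when it says $\psi$ is the run of an MMP on $Y$: pass to a dlt modification, then run an MMP that contracts the non-toric divisors. However, your execution breaks down at the choice of $Y$ and $T'$.

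\textbf{The choice of $Y$ and $T'$.} You take an \emph{arbitrary} $\qq$-factorial dlt modification. You then assert that there is a complete toric $T'$ whose rays are exactly the toric valuations of the components of $B_Y$. This requires those rays to span $N_\qq$ positively, which fails in general. For example, $(\pp^2,Q+L)$ is already snc, so $Y=\pp^2$ is a dlt modification. But $B_Y$ then has only two components, and no complete toric surface has only two boundary curves.

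The existential quantifier in the statement is therefore essential. You must choose $Y$ to extract additional lc places of $(X,B)$, for instance all rays of a fixed complete simplicial projective fan. This is legitimate because every toric valuation is an lc place of $(X,B)$. Once you do this, a projective simplicial fan whose ray set is exactly the components of $B_Y$ exists. Moreover, $Y\dashrightarrow T'$ extracts no divisor: its boundary divisors are components of $B_Y$ by construction, and every divisor of the torus is a divisor of $X$ by hypothesis. This is precisely the paper's treatment of $(\pp^2,Q+L)$: toric blow-ups at the nodes come first, and only then is $L_1$ contracted.

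\textbf{The divisors $F_i$.} Your description of the $F_i$ is wrong. Every divisor on $Y$ has log discrepancy at most $1$. A divisor exceptional over $\pp^n$ whose center meets $\mathbb{G}_m^n$ has log discrepancy at least $2$, so such $F_i$ never occur. The divisors that must be contracted are the log discrepancy one divisors whose centers lie in the toric boundary without being toric valuations, such as $L_1$. Taken literally, your description would give $F=0$.

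\textbf{Termination.} You have not established termination. The klt perturbation $(Y,(1-\delta)B_Y+\epsilon F)$ runs an MMP for $\epsilon F-\delta B_Y$, not for $F$. That MMP may contract divisors outside $F$, including components of $B_Y$. Also, \cite{BCHM10} only gives termination under a bigness hypothesis, which you do not have. The phrase ``Fano type over a suitable base'' has no content either, since $Y\dashrightarrow T'$ is not a morphism.

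What does work is a Fano type hypothesis on $X$, which is the setting where the paper uses the proposition (Theorem~\ref{thm:constructibility-cluster-type}). Take $(X,\Delta)$ klt with $-(K_X+\Delta)$ big and nef. For $t$ close to $1$, the pair $(X,(1-t)\Delta+tB)$ is klt, has anti-log-canonical divisor $(1-t)(-(K_X+\Delta))$, and has log discrepancy $(1-t)a_E(X,\Delta)<1$ at every divisor extracted by $Y\to X$. Hence $Y$ is of Fano type. Then $\epsilon F\sim_\qq K_Y+\Gamma+A$ with $(Y,\Gamma+A)$ klt and $A$ ample, so the $F$-MMP with scaling terminates. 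The negativity lemma on a common resolution of the output and $T'$ shows that all of $F$ is contracted, and your complexity argument then finishes.

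Without such a hypothesis, your sketch supplies no termination argument. Note also that $(Y,B_Y+\epsilon F)$ is not even lc when $F$ contains a stratum of $B_Y$, which can happen for a badly chosen $Y$.
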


The birational contraction $\psi$ of Proposition~\ref{prop:MMP-ct} is indeed the run of an MMP for a suitable divisor on the variety $Y$. 
Proposition~\ref{prop:MMP-ct} is very useful to prove that a pair is cluster type. 
In practice, when we want to prove that a variety or pair is of cluster type, we study its dlt modifications and see whether we can explicitly produce an MMP that terminates in a toric variety.
Disproving that a variety is cluster type tends to be a harder problem (similar to rationality questions). 
In this case, Proposition~\ref{prop:MMP-ct} is not that useful as there exist countably many dlt modifications of a Calabi--Yau pair of coregularity zero. 

We briefly sketch the proof of Theorem~\ref{thm:constructibility-cluster-type}. 
Imagine that we have a family $\mathcal{X}\rightarrow T$ of smooth Fano varieties. 
We want to argue that if a general fiber $\mathcal{X}_t$ is of cluster type, then all fibers are cluster type over an open of the base. 
Using Chevalley's theorem on the image of constructible sets, we may take arbitrary finite covers of the base throughout our argument. 
First, we consider a cluster type boundary $\mathcal{B}_t \in |-K_{\mathcal{X}_t}|$ and, up to a finite cover of $T$, we extend it to nearby fibers. 
Thus, we have a family of Calabi--Yau pairs 
$(\mathcal{X},\mathcal{B})\rightarrow T$ where the underlying varieties are smooth Fano varieties. 
By Proposition~\ref{prop:MMP-ct}, there is a 
dlt modification $\phi_t \colon (\mathcal{Y}_t,\mathcal{B}_{\mathcal{Y}_t})\rightarrow (\mathcal{X}_t,\mathcal{B}_t)$ and a crepant birational contraction $\psi_t\colon (\mathcal{Y}_t,\mathcal{B}_{\mathcal{Y}_t})
\dashrightarrow (\mathcal{T}_t,\mathcal{B}_{\mathcal{T}_t})$ to a toric 
Calabi--Yau pair. 
The main aim of the proof is to argue that both $\phi_t$ and $\psi_t$ extend to the whole family $(\mathcal{X},\mathcal{B})\rightarrow T$.
This is achieved by extending divisors from $\mathcal{X}_t$ to the whole family $\mathcal{X}$. We also use some statements about the MMP in families. 
Finally, we need to argue that for the extended family 
$(\mathcal{T},\mathcal{B}_\mathcal{T})\rightarrow T$ 
all fibers near $t\in T$ are toric. This is done by computing the complexity of near fibers (see~\cite{JM24}).

In~\cite{MZ25}, the author and Z\'u\~niga proved some statements about limits of cluster type pairs.
The idea of these proofs closely follow the work of~\cite{JM24}.
One of the main theorems is the following. 

\begin{theorem}\label{thm:limits-ct}
Let $\pi\colon \mathcal{X}\rightarrow \mathbb{D}$ be a projective Fano fibration. Let $(\mathcal{X},\mathcal{X}_0+\mathcal{B})$ be a Calabi--Yau pair of index one over $\mathbb{D}$. 
Assume that $(\mathcal{X},\mathcal{X}_0)$ is plt and purely terminal on the complement of $\mathcal{B}$. 
If $(\mathcal{X}_t,\mathcal{B}_t)$ is of cluster type for $t\in \mathbb{D}^*$, then the central fiber 
$(\mathcal{X}_0,\mathcal{B}_0)$ is a finite quotient of a cluster type pair. 
\end{theorem}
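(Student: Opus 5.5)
The plan follows the strategy of Theorem~\ref{thm:constructibility-cluster-type} from~\cite{JM24}: spread a cluster type structure on the general fiber over the whole disk, then degenerate it to the central fiber. The one new feature is that the limit of tori may only be a quotient of a torus.

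\emph{Step 1: spread the structure over the family.} Fix a very general $t\in \mathbb{D}^*$. By Proposition~\ref{prop:MMP-ct} there is a dlt modification $\phi_t\colon (\mathcal{Y}_t,\mathcal{B}_{\mathcal{Y}_t})\to(\mathcal{X}_t,\mathcal{B}_t)$ and a crepant birational contraction $\psi_t$ to a toric Calabi--Yau pair $(\mathcal{T}_t,\mathcal{B}_{\mathcal{T}_t})$. The divisors extracted by $\phi_t$ are lc places of $(\mathcal{X}_t,\mathcal{B}_t)$. After a finite base change of $\mathbb{D}$ and shrinking, these divisorial valuations spread out to lc places of $(\mathcal{X},\mathcal{X}_0+\mathcal{B})$ that are horizontal over $\mathbb{D}$.

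Since $\mathcal{X}\to\mathbb{D}$ is a Fano fibration, $\mathcal{X}$ is of Fano type over $\mathbb{D}$. Using~\cite{Mor19}, as in the proof of Proposition~\ref{prop:from-lr-to-ct}, I extract exactly these divisors to get $\Phi\colon(\mathcal{Y},\mathcal{Y}_0+\mathcal{B}_\mathcal{Y})\to(\mathcal{X},\mathcal{X}_0+\mathcal{B})$. Next I run the relative MMP over $\mathbb{D}$ for a divisor whose restriction to $\mathcal{Y}_t$ drives $\psi_t$; this exists because $\mathcal{Y}$ is a Mori dream space over $\mathbb{D}$~\cite{BCHM10}. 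Invariance of plurigenera and of the MMP in families, which is where terminality enters in~\cite{JM24}, gives that this MMP restricts to $\psi_t$ on general fibers. The output is a crepant contraction $\Psi\colon\mathcal{Y}\dashrightarrow\mathcal{T}$ over $\mathbb{D}$ whose general fiber is the toric pair $(\mathcal{T}_t,\mathcal{B}_{\mathcal{T}_t})$.

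\emph{Step 2: control the central fiber.} Because $(\mathcal{X},\mathcal{X}_0)$ is plt, the fiber $\mathcal{X}_0$ is normal and irreducible, and by adjunction $(\mathcal{X}_0,\mathcal{B}_0)$ is Calabi--Yau of index one. Pure terminality of $(\mathcal{X},\mathcal{X}_0)$ off $\mathcal{B}$ implies that every valuation we extracted, or contracted in the MMP, has center meeting $\mathcal{X}_0$ only inside $\mathcal{B}$, or is the unique log canonical place $\mathcal{X}_0$ itself. So no divisor of $\mathcal{X}_0$ outside $\mathcal{B}_0$ is contracted, and the strict transform of $\mathcal{X}_0$ remains irreducible and birational to $\mathcal{X}_0$ through the whole process. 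Consequently $\Psi$ restricts to a crepant birational map $(\mathcal{X}_0,\mathcal{B}_0)\dashrightarrow(\mathcal{T}_0,\mathcal{B}_{\mathcal{T}_0})$ that contracts only divisors lying over the boundary.

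\emph{Step 3 (main obstacle): identify the central toric fiber.} I must show that $(\mathcal{T}_0,\mathcal{B}_{\mathcal{T}_0})$ is a finite quotient of a toric pair. Upper semicontinuity of the relevant ranks gives $c(\mathcal{T}_0,\mathcal{B}_{\mathcal{T}_0})\le 0$. By Theorem~\ref{thm:charact-toric}, $\mathcal{T}_0$ is then toric. The difficulty is that the boundary components $\mathcal{B}_{\mathcal{T}_t}$ may acquire monodromy around $0$ or become non-reduced or reducible in the limit. My first attempt would be to handle this by a cyclic base change together with normalization. This turns the degeneration into one whose central fiber is a quotient of a toric pair by a finite group permuting or scaling torus coordinates, equivalently a torus embedding of the form $\mathbb{G}_m^n/G\dashrightarrow\mathcal{X}_0$.

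Undoing the base change then exhibits $(\mathcal{X}_0,\mathcal{B}_0)$ as the $G$-quotient of a pair $(\mathcal{X}'_0,\mathcal{B}'_0)$. Using Step 2, that pair carries a crepant map from a toric pair contracting only boundary divisors, so it is cluster type.
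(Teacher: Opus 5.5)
Your overall plan (spread the dlt modification and toric model of $(\mathcal{X}_t,\mathcal{B}_t)$ over $\mathbb{D}$, then restrict to the central fibre) is the \cite{JM24} strategy the paper indicates for \cite{MZ25}. However, the two steps that carry the content of the theorem fail as written.

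\textbf{Step~2 controls the wrong divisors.} By Definition~\ref{def:ct} (or Proposition~\ref{prop:MMP-ct}), you need $\mathcal{T}_0\dashrightarrow\mathcal{X}_0$ to contract no prime divisor of $\mathcal{T}_0$ outside $\mathcal{B}_{\mathcal{T}_0}$. Equivalently, $\Psi|_{\mathcal{X}_0}$ must \emph{extract} no non-boundary divisor. Contracting divisors of $\mathcal{X}_0$ is harmless, and it does happen. The MMP contracts horizontal divisors not contained in $\mathcal{B}_{\mathcal{Y}}$; already for surfaces these are the families of interior $(-1)$-curves of the Looijenga pairs $\mathcal{Y}_t$. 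Their traces on $\mathcal{X}_0$ are in general non-boundary divisors. So your claim that every contracted valuation has centre meeting $\mathcal{X}_0$ only inside $\mathcal{B}$ is false in general, and it concerns the wrong direction anyway. The real danger is a flip over $\mathbb{D}$ whose flipped locus is a divisor $F$ of the new central fibre, which the restriction to $\mathcal{X}_0$ then extracts. Nothing in your argument excludes this. This is where pure terminality should enter. Blowing up such an $F\not\subset\mathcal{B}_{\mathcal{T}}$ in $\mathcal{T}$ gives a divisor $G$ over $\mathcal{X}$ whose centre has codimension at least two in $\mathcal{X}_0$ and with $a_G(\mathcal{X},\mathcal{X}_0+\mathcal{B})=1$. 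Hence $a_G(\mathcal{X},\mathcal{X}_0)=1$ whenever that centre is not in $\mathcal{B}$. You also need $\mathcal{T}_0$ to stay normal in order to apply adjunction and Theorem~\ref{thm:charact-toric} to it. This is not automatic, since the MMP is run for the contracted divisor and not for $K_{\mathcal{Y}}+\mathcal{Y}_0$.

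\textbf{Step~3 does not hold together either.} The rank of $\mathrm{Cl}_\qq$ of the fibres can jump \emph{up} at the special fibre. For instance, smooth quadric threefolds degenerating to the quadric cone over $\pp^1\times\pp^1$ (smooth total space, terminal central fibre) go from rank $1$ to rank $2$. The number of boundary components can also jump up. So semicontinuity gives no upper bound on $c(\mathcal{T}_0,\mathcal{B}_{\mathcal{T}_0})$. Note also what would follow if $c(\mathcal{T}_0,\mathcal{B}_{\mathcal{T}_0})\le 0$ did hold. Theorem~\ref{thm:charact-toric} would make $\mathcal{T}_0$ toric with $\mathcal{B}_{\mathcal{T}_0}$ (reduced, $K+B\sim 0$) its full toric boundary. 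With a correct Step~2 you would then have shown that $(\mathcal{X}_0,\mathcal{B}_0)$ is cluster type outright, and the finite quotient in the statement would be superfluous. This suggests that this is exactly the point that cannot be settled by counting.

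Finally, your source for the group $G$ is vacuous. The central fibre is reduced and irreducible, so the base change $s\mapsto s^m$ of the normal family is again normal with the same central fibre. Moreover, $\mu_m$ acts trivially on it, already on its function field, which is that of $\mathcal{X}_0$. So ``undoing the base change'' produces no non-trivial quotient. What is missing is a genuine analysis of the central fibre of the degenerate toric family $\mathcal{T}\to\mathbb{D}$, which is the subject of the toric analogue \cite[Theorem 1.1]{MZ25} mentioned in the text. That is where a finite cover of $(\mathcal{T}_0,\mathcal{B}_{\mathcal{T}_0})$, and then of $(\mathcal{X}_0,\mathcal{B}_0)$, has to be produced.
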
 

Theorem~\ref{thm:limits-ct} has some technical conditions. However, all conditions are satisfied if we assume that $\mathcal{X}\rightarrow \mathbb{D}$ is a family of terminal Fano varieties. The previous theorem can be thought of as saying that limits of cluster type pairs are finite quotients of cluster type pairs. 
A similar statement for toric varieties is proved in~\cite[Theorem 1.1]{MZ25}. We conclude this section by proposing the following problem, which seems to be challenging:

\begin{problem}\label{prob:toric-deg}
Prove that every cluster type pair admits a toric degeneration. 
\end{problem} 

In the previous problem, we mostly care about normal toric degenerations. In the setting of cluster varieties, i.e., the spectrum of cluster algebras and suitable compactifications, there are plenty of results in the literature proving that such geometric objects admit many toric degenerations (see, e.g.,~\cite{BLMM17,BFMMNC20}). However, in the cluster type setting, there is no proof yet that this is always the case. 
Providing a positive answer for the previous problem would be a quintessential piece towards the classification of cluster type varieties. 

\section{Cox rings of cluster type varieties}
\label{sec:ct-and-cox-rings}

In this section, we discuss Cox rings of cluster type varieties. To understand the main theorem in this direction, we need to introduce the concept of {\em mutation semigroup algebras}.
Throughout this section, $N$ denotes a free, finitely generated abelian group, and $M$ denotes its dual. 

\begin{definition} 
{\em 
An {\em algebraic mutation datum}
is a pair $(u,h=g^k)$, where $u\in N$
is a primitive vector, 
$g \in \cc[u^\perp\cap M]$ is an irreducible Laurent polynomial, and $k$ is a positive integer. 
For a polyhedral cone $\sigma \subset N$, 
we say that $(u,h=g^k)$ is {\em $\sigma$-admissible} if $u\not \in \sigma$.

Let $\sigma \subset N_\qq$ be a rational polyhedral cone and $\cc \hookrightarrow F$ be a field extension. 
A $\cc$-algebra homomorphism 
$\iota\colon \cc[\sigma^\vee\cap M]\hookrightarrow F$ is an {\em embedded semigroup algebra} if it induces an isomorphism $\cc(M)\simeq F$.
}
\end{definition} 

\begin{definition}
{\em 
Let $\sigma_1,\sigma_2\subset N_\qq$ be two rational polyhedral cones and $U_{\sigma_1},U_{\sigma_2}$ be the corresponding affine toric varieties. 
A birational map $\mu\colon U_{\sigma_1}\dashrightarrow U_{\sigma_2}$ is a {\em mutation} if the two following conditions are satisfied:
\begin{enumerate}
\item the induced isomorphism $\mu^*\colon \cc(M)\rightarrow \cc(M)$ is given on monomials by $\mu^*(x^m)=x^mh^{-\langle u,m\rangle}$ for some $\sigma_2$-admissible algebraic mutation datum $(u,h)$, and 
\item strict transform via $\mu$ induce a bijection between the prime torus invariant divisors of $U_{\sigma_1}$ and $U_{\sigma_2}$.
\end{enumerate} 
Two embedded semigroup algebras 
$\iota_i \colon \cc[\sigma_1^\vee\cap M]\hookrightarrow F$ with $i\in \{1,2\}$ 
are said to {\em differ by a mutation} if 
$\iota_1^{-1}\circ \iota_2$ is a mutation.
}
\end{definition} 

\begin{definition}
{\em 
A {\em mutation semigroup algebra} is a finitely generated ring $R$ over $\cc$ which can be expressed as 
\[
R=R_0\cap \dots \cap R_k 
\]
where the following conditions hold for $i\in \{1,\dots,k\}$:
\begin{enumerate}
\item there is an embedded semigroup algebra
$\iota_i \colon \cc[\sigma_i^\vee\cap M]\hookrightarrow {\rm Frac}(R)$ with image
$R_i$;
\item the embedded semigroup algebras $\iota_0$ and $\iota_i$ differ by a mutation; and
\item for each prime ideal $\mathfrak{p}\subset R_i$ of height one, the prime ideal $\mathfrak{p}\cap R_i$ has height one in $R_i$.
\end{enumerate} 
We abbreviate mutation semigroup algebra by MSA.
We say that $R$ is just a {\em mutation algebra} if each $\sigma_i=\{0\}$.
}
\end{definition}

In~\cite{EFMS25}, we proved that cluster type pairs and mutation semigroup algebras are closely related. More precisely, we proved the following theorem.

\begin{theorem}\label{thm:msa-vs-ct}
Let $R$ be a finitely generated $\cc$-algebra.
Assume that $U={\rm Spec}(R)$ has klt singularities. 
Then, the following two statements are equivalent:
\begin{enumerate}
\item $R$ is a mutation semigroup algebra; and 
\item there exists a cluster type pair $(X,B)$, an ample divisor $0\leqslant A \leqslant B$ and an isomorphism 
\[
\mathcal{O}(X\setminus A)\simeq R.
\]
\end{enumerate}
\end{theorem}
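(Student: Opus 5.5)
The plan is to prove the two implications separately. In both directions, the bridge is the geometric description of cluster type pairs from Theorem~\ref{thm:structural-thm-ct} and Proposition~\ref{prop:MMP-ct}: away from a codimension two subset, $X\setminus B$ is covered by tori whose transition maps preserve $\Omega_n$.

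\emph{(2)$\Rightarrow$(1).} Let $(X,B)$ be cluster type, with $0\le A\le B$ ample and $U=X\setminus A$. Since $A$ is ample, $U$ is affine, so $R=\mathcal{O}(U)$.

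First, fix a toric model $\phi\colon(\pp^n,H_0+\dots+H_n)\dashrightarrow(X,B)$ that is an embedding in codimension one on $\mathbb{G}_m^n$. Each component $D$ of $B$ that is not contained in $A$ is an lc place of the toric pair, and so corresponds to a ray in $N$. I take $\sigma_0$ to be the cone spanned by these rays. Then $R_0:=\cc[\sigma_0^\vee\cap M]$ should be the ring of functions on the torus chart together with the generic points of the divisors in $B\setminus A$.

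Second, I use Theorem~\ref{thm:structural-thm-ct} to cover the codimension one points of $U$ by finitely many further torus charts. These come from other toric models obtained from $\phi$ by crepant birational modifications. Using Proposition~\ref{prop:MMP-ct} and the fact that exceptional divisors of crepant volume-preserving maps are lc places, I will show that each transition map can be written as a single mutation $x^m\mapsto x^m h^{-\langle u,m\rangle}$. The vector $u$ is the valuation of a divisor that the chart contracts, and $h=g^k$ is the equation of its image in the torus $u^\perp$. For each such chart I take $\sigma_i$ in the same way as $\sigma_0$.

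Third, $U$ is normal, and it is covered in codimension one by the affine toric pieces $U_{\sigma_i}$. Hence $R=\bigcap_i R_i$ by Hartogs. The height-one condition (3) holds because every height-one prime of $R$ is the ideal of a divisor meeting some chart.

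The hard point is choosing the charts so that each is \emph{one} mutation from the base chart, and so that $u\notin\sigma_i$. My first attempt will be to take, for each divisor $E\subset U$ missed by the base torus, a single weighted blow-up of the toric model along a stratum of $H_0+\dots+H_n$ that extracts $E$, followed by a contraction. This is the local picture behind~\eqref{eq:mutation}.

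\emph{(1)$\Rightarrow$(2).} Given $R=\bigcap R_i$, the inclusion $\iota_0$ gives an open embedding $\mathbb{G}_m^n\subset U_{\sigma_0}$. The plan is to show that $\mathbb{G}_m^n\to U={\rm Spec}(R)$ is an embedding in codimension one, using condition (3) together with the fact that mutations biject torus invariant divisors. Then the form $\Omega_n$ has no zeros or poles on $U$ in codimension one, because mutations preserve it: one computes $\mu^*\Omega_n=\Omega_n$ since $h\in\cc[u^\perp\cap M]$.

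Next, because $U$ is klt and $K_U\sim 0$, I take a projective compactification $U\subset X$ with boundary $A$ ample. After a log resolution and an MMP, I replace $X$ by a model on which $B:={\rm Poles}(\Omega_n)$ satisfies $K_X+B\sim0$, with $(X,B)$ log canonical. The aim is for this to be a dlt modification followed by an MMP that keeps the boundary ample, so that $A$ is supported on $B$.

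The main obstacle is ensuring that the compactification divisors all have log discrepancy $0$, so that $\Omega_n$ has no zeros at infinity. I would address this by compactifying via a projective toric completion of each $U_{\sigma_i}$ and gluing along the mutations. Mutations extend to crepant maps of toric pairs, so the boundary divisors are automatically poles of $\Omega_n$.
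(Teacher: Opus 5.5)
Your outline has genuine gaps at exactly the two steps you flag, and the tools you propose for them do not work.

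\textbf{Direction (2)$\Rightarrow$(1).} The fact you invoke, that exceptional divisors of crepant volume-preserving maps are lc places, is false here.
\begin{itemize}
\item The divisors contracted by a transition map between torus charts are closures of prime divisors $E\subset X\setminus B$. By crepancy, $a_E(T,B_T)=a_E(X,B)=1$, so they are not lc places.
\item For the same reason, no weighted blow-up of a stratum of $H_0+\dots+H_n$ extracts $E$. Such blow-ups only extract toric valuations, which have log discrepancy $0$.
\item Charts supplied by Theorem~\ref{thm:structural-thm-ct} are in general several mutations away from the base chart. The charts must therefore be built from the base torus, not chosen from that theorem.
\end{itemize}
The missing ingredient is a valuative lemma.
\begin{itemize}
\item Restricting $v_E$ to monomials gives $ku\in N$ with $u$ primitive, and $k\geq 1$ because $E$ misses the base torus.
\item On a toric model $T'$ whose fan contains the ray $u$, the center of $E$ lies in $D_u$. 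Let $x_u,y_2,\dots,y_c$ be parameters at the generic point of the center. Log canonicity of the snc pair $(T',D_u+\sum_i\operatorname{div}(y_i))$ there gives $1=A(v_E)-k\geq c-1$.
\item Hence the center is $\{x_u=g=0\}$ with $g\in\cc[u^\perp\cap M]$ irreducible. Equality then forces $v_E$ to be the monomial valuation with weights $(k,1)$ in $(x_u,g)$.
\item The single mutation with datum $(u,g^k)$ produces a torus in which $E=\{g=0\}$. This torus still embeds in codimension one, because the mutation is an isomorphism off $\{g=0\}$.
\end{itemize}
The inclusion $R\subseteq R_0$ holds because every $v_D$ is monomial. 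It induces a morphism $U_{\sigma_0}\to U$, which must send the center of $v_E$ (resp.\ $v_D$) to the generic point of a divisor. This is impossible if that center has codimension at least two, which gives both $u\notin\sigma_0$ and that each $v_D$ spans a ray of $\sigma_0$. None of this appears in your sketch.

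\textbf{Direction (1)$\Rightarrow$(2).} The compactification is the whole content of this direction, and you do not construct it.
\begin{itemize}
\item Gluing toric completions of the $U_{\sigma_i}$ along mutations does not give a separated variety, let alone a projective one with $X\setminus A\simeq U$ and $A$ ample. Mutations are only birational; they are not isomorphisms between open sets covering those completions.
\item An unspecified MMP on an arbitrary compactification does not control zeros or higher-order poles of $\Omega_n$ at infinity.
\item $K_U\sim 0$ is false when $\sigma_0\neq 0$. The form $\Omega_n$ has simple poles along the images $D_U$ of the torus-invariant divisors, so only $K_U+D_U\sim 0$ holds.
\end{itemize}
One way to close the gap is to work from a single projective toric $T$ whose fan contains $\sigma_0$.
\begin{itemize}
\item Since $R$ is a Krull domain, every height-one prime of $R$ comes from some chart. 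So the divisors of $U$ are the torus divisors, the torus-invariant divisors of $U_{\sigma_0}$, and the finitely many divisors $\{g_i=0\}$ of the mutated tori.
\item By the computation above, the divisors $\{g_i=0\}$ have log discrepancy $1$ over $(T,B_T)$.
\item Extract exactly these divisors as in the proof of Proposition~\ref{prop:from-lr-to-ct}, using an auxiliary klt boundary with anti-ample log canonical class. This gives a Fano type $Y$, crepant to the toric pair.
\item Let $A_Y$ be the sum of the boundary components not indexed by rays of $\sigma_0$. Then $Y\setminus A_Y$ and $U$ have the same divisorial valuations, so $\mathcal{O}(Y\setminus A_Y)=R$.
\item The ample model $X=\operatorname{Proj}\bigoplus_m H^0(Y,mA_Y)$ then satisfies $X\setminus A_X\simeq U$, with $A_X\leq B_X$ ample.
\end{itemize}
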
 
The spectrum of mutation semigroup algebras can be regarded as natural affine charts of cluster type varieties. They play the same role as affine toric varieties in projective toric varieties.

Let $X$ be a normal projective variety with 
free finitely generated Class group ${\rm Cl}(X)$. Let $D_1,\dots,D_k$ be Weil divisors on $X$ that generate the Class group. 
The {\em Cox ring} of $X$ is defined to be: 
\[
{\rm Cox}(X):=\bigoplus_{(m_1,\dots,m_k)\in \zz^k} H^0(X,\mathcal{O}_X(m_1D_1+\dots+D_km_k))
\]
where the multiplication happens in the function field of $X$. We say that a normal projective variety is a {\em Mori dream space} if its Cox ring is finitely generated over $\cc$. Cox rings and Mori deam spaces can be defined more generally whenever ${\rm Cl}(X)$ is finitely generated (see, e.g.,~\cite[Definition 3.4]{BM21}). 
Toric varieties can be characterized as the Mori deam spaces with Cox rings being polynomial rings~\cite[Corollary 2.10]{HK00}.
Fano type varieties can be characterized as the Mori dream spaces with Cox rings having Gorenstein canonical singularities (see, e.g.,~\cite{GOST15,Bra19}). Using Theorem~\ref{thm:msa-vs-ct}, we can show that the cluster type condition can be detected at the level of Cox rings as well.

\begin{theorem}\label{thm:ct-cox-rings}
Let $X$ be a klt Fano variety. 
The variety $X$ is of cluster type
if and only if ${\rm Cox}(X)$ is a ${\rm Cl}(X)$-graded mutation semigroup algebra.
\end{theorem}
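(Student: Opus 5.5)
The plan is to pass between $X$ and the affine variety $\widehat{X}:={\rm Spec}\,{\rm Cox}(X)$ via the characteristic space. Then Theorem~\ref{thm:msa-vs-ct} does the work in both directions. Since $X$ is a klt Fano variety, it is a Mori dream space by~\cite{BCHM10}, so ${\rm Cox}(X)$ is finitely generated. Moreover $\widehat{X}$ has Gorenstein canonical, in particular klt, singularities~\cite{GOST15,Bra19}, so the hypothesis of Theorem~\ref{thm:msa-vs-ct} holds for $R={\rm Cox}(X)$. We have the good quotient $\widehat{X}^{\rm ss}\to X$ by the quasitorus $H={\rm Spec}\,\cc[{\rm Cl}(X)]$, where $\widehat{X}^{\rm ss}\subset \widehat{X}$ is open with complement of codimension at least two.

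\emph{($\Rightarrow$)} Let $(X,B)$ be cluster type with torus embedding $j\colon\mathbb{G}_m^n\dashrightarrow X$. Write $B=\sum B_i$ and let $\widehat{B}$ be the divisor on $\widehat X$ cut out by the canonical sections $s_{B_i}\in{\rm Cox}(X)_{[B_i]}$.
\begin{enumerate}
\item Show that $(\widehat{X},\widehat{B})$ is a cluster type pair. The preimage of $j(\mathbb{G}_m^n)$ in $\widehat X$ is an $H$-torsor over a torus; since the $B_i$ generate ${\rm Cl}(X)$ over $\qq$ (the complement of $B$ contains a torus, which has trivial class group), this torsor is itself a torus $\mathbb{G}_m^{n+r}$ up to codimension two.
\item Check that the volume form is $\Omega_{n+r}=\Omega_n\wedge \bigwedge_k dt_k/t_k$ and that it has poles exactly along $\widehat{B}$. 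This uses $K_{\widehat X}\sim 0$ and $\widehat{K_X+B}$ being trivial.
\item Compactify $\widehat{X}$ $H$-equivariantly, e.g.\ by taking the projective cone over a Veronese-type embedding. This gives a cluster type pair $(\overline{X},\overline{B})$ with $\overline{X}\setminus A=\widehat X$ for an ample $A\le \overline B$ at infinity, after a crepant extraction as in Proposition~\ref{prop:from-lr-to-ct}.
\item Apply Theorem~\ref{thm:msa-vs-ct}(2)$\Rightarrow$(1) to conclude that ${\rm Cox}(X)$ is an MSA.
\item Obtain the ${\rm Cl}(X)$-grading by choosing all torus charts $H$-equivariantly. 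The mutations of Theorem~\ref{thm:structural-thm-ct} can be lifted with homogeneous data $h$, since $H$ commutes with the torus on the open charts.
\end{enumerate}

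\emph{($\Leftarrow$)} Suppose ${\rm Cox}(X)=R_0\cap\dots\cap R_k$ is a graded MSA. By Theorem~\ref{thm:msa-vs-ct}(1)$\Rightarrow$(2), $\widehat{X}$ contains an embedded torus $\mathbb{G}_m^{n+r}\dashrightarrow\widehat X$ whose volume form has no zeros. The gradedness means this torus is $H$-stable and contains $H$ as a subgroup. Take the GIT quotient of the chart $\iota_0(\cc[M])$ restricted to $\widehat X^{\rm ss}$ by $H$. This yields an embedding in codimension one $\mathbb{G}_m^{n+r}/H\cong\mathbb{G}_m^n\dashrightarrow X$, where we first pass to $H^0$ and then deal with the finite part using the freeness of ${\rm Cl}(X)$. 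Contracting the $H$-directions shows that $\Omega_{n+r}$ descends to $\Omega_n$, and that zeros of $\Omega_n$ on $X$ would pull back to zeros upstairs. Hence $X$ is of cluster type.

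The main obstacle is the compatibility between the grading and the mutation structure. In ($\Rightarrow$) we must ensure that the MSA presentation produced by Theorem~\ref{thm:msa-vs-ct} can be chosen with $H$-homogeneous mutation data. In ($\Leftarrow$) we must ensure that the quotient of the torus chart remains an embedding in codimension one, which requires $\widehat X\setminus\widehat X^{\rm ss}$ to have codimension at least two. For the first point I would rerun the proof of Theorem~\ref{thm:msa-vs-ct} equivariantly, using that the $H$-action commutes with the big torus on every chart.
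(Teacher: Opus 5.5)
The paper gives no argument beyond ``using Theorem~\ref{thm:msa-vs-ct}''. Your route (pass to $\widehat X=\operatorname{Spec}\mathrm{Cox}(X)$ and invoke that theorem) is therefore the intended one. As written, however, there is a genuine gap in ($\Rightarrow$): you never produce a \emph{graded} MSA, and that is precisely the content beyond Theorem~\ref{thm:msa-vs-ct}. That theorem is ungraded: it gives some presentation $R_0\cap\dots\cap R_k$ whose charts have no a priori relation to the $H$-action. Step (5) does not repair this, for two reasons:
\begin{itemize}
\item Theorem~\ref{thm:structural-thm-ct} gives volume-preserving transition maps between charts of $X$, not the one-step mutations $\iota_0^{-1}\circ\iota_i$ of a presentation of $\mathrm{Cox}(X)$.
\item The lift of such a map to $\widehat X$ has the form $(x,t)\mapsto(\mu(x),t\cdot\varphi(x))$, where $\varphi$ records the change of trivialization of the torsor, so homogeneity of the mutation data is not automatic.
\end{itemize}
You flag this, but your fallback (``rerun the proof equivariantly, using that the $H$-action commutes with the big torus on every chart'') presupposes exactly what must be shown: that every chart $T_i$ in the presentation is $H$-stable. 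Once that is known the rest is formal. A connected group acting on $\mathbb{G}_m^N$ minus a codimension-two subset acts by translations, so $H\hookrightarrow T_i$ and each $\iota_i$ is homogeneous for the induced grading. You therefore need an actual argument. One option is to show that $H$ acts on the compactified pair $(X',B')$ preserving $\Omega$, and that a connected group doing so stabilizes every torus chart. Another is an $H$-equivariant proof of Theorem~\ref{thm:msa-vs-ct} via $H$-equivariant dlt modifications and MMPs, which are available because $H$ is connected.

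Several other steps also need repair.

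\emph{Step (1).} The stated reason is false: $X\setminus B$ need not be a torus up to codimension two. Take the paper's example $X_m$ with $m=2$, a smooth del Pezzo surface. Each $E_k\setminus B\cong\mathbb{A}^1$ lies in $X_2\setminus B$, and the components of $B$, of classes $H-E_1-E_2$, $H$, $H$, span only a rank-two subgroup of $\mathrm{Cl}(X_2)\cong\zz^3$. The conclusion survives for another reason: over $j(\mathbb{G}_m^n\setminus Z)$ the quotient is a torsor under the split torus $H$, hence trivial since $\operatorname{Pic}(\mathbb{G}_m^n\setminus Z)=0$.

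\emph{Step (2).} This should rest on three facts: $\widehat X^{\rm ss}\to X$ is an $H$-torsor over every codimension-one point; $\Omega_{n+r}$ is $H$-invariant; and the $H$-invariant prime divisors of $\widehat X$ are exactly the $V(s_D)$. Together these give $\operatorname{ord}_{V(s_D)}\Omega_{n+r}=\operatorname{ord}_D\Omega_n$.

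\emph{Step (3).} No crepant extraction is wanted. One with center in $\widehat X$ would destroy $X'\setminus A=\widehat X$, and none is needed. What must be checked is that $\Omega$ has no zero along the divisor at infinity. Take $X'=\operatorname{Proj}\mathrm{Cox}(X)[t]$ for the grading given by a one-parameter subgroup $\lambda\subset H$ that is positive on $\mathrm{Cox}(X)$. Then $\lambda$ fixes $A=\{t=0\}$ pointwise and $\Omega$ is $\lambda$-invariant, which forces a simple pole along $A$, so $A\le B'$.

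\emph{Direction ($\Leftarrow$).} Theorem~\ref{thm:msa-vs-ct} yields some torus, not necessarily the $\iota_0$-torus, so gradedness says nothing about it. Argue directly instead:
\begin{itemize}
\item $R\subset R_0$ and condition (3) make $T_0=\operatorname{Spec}\cc[M]\dashrightarrow\widehat X$ an embedding in codimension one;
\item every height-one prime of $R=\bigcap R_i$ comes from a height-one prime of some $R_i$;
\item mutations preserve $\Omega$, which has no zeros on the $U_{\sigma_i}$.
\end{itemize}
For the descent to $X$, it is not enough that $\widehat X\setminus\widehat X^{\rm ss}$ has codimension two. You need the preimage in $\widehat X^{\rm ss}$ of the non-torsor locus to have codimension at least two, which again follows from the description of the $H$-invariant prime divisors.
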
 

The previous theorem provides a new algebraic tool for proving that a klt Fano variety is of cluster type. Theorem~\ref{thm:ct-cox-rings} suggests that many smooth Fano threefolds are cluster type.
Indeed, in~\cite{DHHKL15}, Derenthal, Hausen, Heim, Keicher, and Laface compute the Cox rings of many smooth Fano threefolds, and they resemble mutation semigroup algebras (see~\cite[Theorem 4.5]{DHHKL15}).
In~\cite[Corollary 1.8]{EFMS25}, we also prove that many varieties related to Lie theory are Fano type and cluster type.
More precisely, we prove that Richardson varieties, flag varieties, Schubert varieties, and Bott-Samelson varieties are Fano type and cluster type.

\bibliographystyle{habbvr}
\bibliography{bib}

\vspace{0.5cm}
\end{document}